\theoremstyle{plain} 
\newtheorem{theorem}{Theorem}[section]
\newtheorem{lemma}[theorem]{Lemma}
\newtheorem{observation}[theorem]{Observation}
\newtheorem{proposition}[theorem]{Proposition}
\newtheorem*{definition}{Definition}
\newtheorem{problem}{Problem}
\newcommand{\stpath}[2]{$#1$--$#2$ path}
\tikzset{
  b-vertex/.style = {fill=RubineRed, diamond, draw=blue, inner sep=1.5pt},
  a-vertex/.style = {draw=blue, fill=cyan, inner sep=2pt},
  witnessed edge/.style = {ultra thick, teal},
  region/.style = {rounded corners, draw=gray, fill=gray!40},
  filled vertex/.style = {circle,draw=black,fill=black!50,inner sep=1.5pt},
  gf-vertex/.style = {circle,draw=blue,fill=cyan,inner sep=1.2pt, label distance=-0.05cm}, gnf-vertex/.style = {circle, draw, fill = white, inner sep=1.2, label distance=-0.05cm}, empty vertex/.style = {circle, draw, fill = white, inner sep=1},
  f-vertex/.style = {fill=black!50, draw=blue, inner sep=2pt, label distance=-0.05cm},
  nf-vertex/.style = {circle, draw, fill = white, inner sep=1.2, label distance=-0.05cm},
}
\tikzset{shifted path/.style args={from #1 to #2}{insert path={
let \p1=($(#1.east)-(#1.center)$),
\p2=($(#2.east)-(#2.center)$),\p3=($(#1.center)-(#2.center)$),
\n1={.75/veclen(\x1,\y1)},\n2={.75/veclen(\x2,\y2)},\n3={atan2(\y3,\x3)} in
(#1.{\n3+180+asin(\n1)}) to (#2.{\n3-asin(\n2)})
}}}
\newcommand{\uncertain}[2]
{
  \draw[shifted path=from #1 to #2];
  \draw[densely dotted, thick, shifted path=from #2 to #1];
}
\newcommand{\strictsubset}{\sqsubset}
\newcommand{\lc}{\ensuremath{\operatorname{lc}}}
\newcommand{\rc}{\ensuremath{\operatorname{rc}}}
\newcommand{\lp}{\ensuremath{\operatorname{lp}}}
\newcommand{\rp}{\ensuremath{\operatorname{rp}}}
\newcommand{\lk}{\ensuremath{\operatorname{lk}}}
\newcommand{\rk}{\ensuremath{\operatorname{rk}}}
\newcommand{\cliques}{\ensuremath{\mathcal{K}}}
\newcommand{\inner}{\ensuremath{\operatorname{In}}}
\newcommand{\encomp}{\ensuremath{\operatorname{Ex}}}
\newcommand{\univ}{\ensuremath{\operatorname{Un}}}
\newcommand{\extreme}{\ensuremath{\operatorname{Az}}}
\newcommand{\pqtree}{T}
\newcommand{\interval}{\ensuremath{\operatorname{span}}}
\title{Characterization of Circular-arc Graphs: \\II. McConnell Flipping}
\author{
  Yixin Cao\thanks{Department of Computing, Hong Kong Polytechnic University, Hong Kong, China.  \texttt{yixin.cao@polyu.edu.hk}.
    The author gratefully acknowledges the support of the K. C. Wong Education Foundation.
    } 
  \and Tomasz Krawczyk\thanks{Faculty of Mathematics and Information Science, Warsaw University of Technology, Poland. \texttt{tomasz.krawczyk@pw.edu.pl}.}
}
\date{}
\begin{document}
\maketitle

\begin{abstract}
  McConnell [FOCS 2001] presented a flipping transformation from circular-arc graphs to interval graphs with certain patterns of representations.  Beyond its algorithmic implications, this transformation is instrumental in identifying all minimal graphs that are not circular-arc graphs.  We conduct a structural study of this transformation, and for $C_{4}$-free graphs, we achieve a complete characterization of these patterns.  This characterization allows us, among other things, to identify all minimal chordal graphs that are not circular-arc graphs in a companion paper.
\end{abstract}

\section{Introduction}\label{sec:intro}

All graphs discussed in this paper are finite and simple. 
The vertex set and edge set of a graph~$G$ are denoted by, respectively, $V(G)$ and~$E(G)$.
For a subset~$U\subseteq V(G)$, we denote by $G[U]$ the subgraph of~$G$ induced by~$U$, and by~$G - U$ the subgraph~$G[V(G)\setminus U]$, which is shortened to~$G - v$ when $U = \{v\}$.
The \emph{neighborhood} of a vertex~$v$, denoted by~$N_{G}(v)$, comprises vertices adjacent to~$v$, i.e., $N_{G}(v) = \{ u \mid uv \in E(G) \}$, and the \emph{closed neighborhood} of~$v$ is $N_{G}[v] = N_{G}(v) \cup \{ v \}$.
The \emph{closed neighborhood} and the \emph{neighborhood} of a set~$X\subseteq V(G)$ of vertices are defined as~$N_{G}[X] = \bigcup_{v \in X} N_{G}[v]$ and~$N_{G}(X) =  N_{G}[X] \setminus X$, respectively.
We may drop the subscript if the graph is clear from the context.

A graph is a \emph{circular-arc} graph if its vertices can be assigned to arcs on a circle such that two
vertices are adjacent if and only if their corresponding arcs intersect. 
Such a set of arcs is called a \emph{circular-arc model} for this graph; see Figure~\ref{fig:normal-and-helly}. 
If we replace the circle with the real line and arcs with intervals, we end with interval graphs. 
All interval graphs are circular-arc graphs. 
Both graph classes are by definition \emph{hereditary}, i.e., closed under taking induced subgraphs.

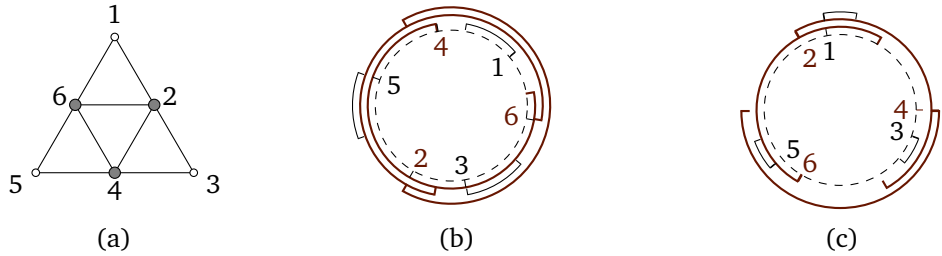
\begin{figure}[ht!]
  \centering\small
  \begin{subfigure}[b]{.23\linewidth}
    \centering
    \begin{tikzpicture}[scale=.6]\small
      \foreach \i in {1, 2, 3} 
      \draw ({120*\i-90}:1) -- ({120*\i+30}:1) -- ({120*\i-30}:2) -- ({120*\i-90}:1);
      \foreach[count =\i from 0] \n in {1, 3, 5} {
        \node[empty vertex] at ({90 - 120*\i}:2) {};
        \node at ({90 - 120*\i}:2.5) {$\n$};
        \pgfmathsetmacro{\x}{int(\n+1)}\node[filled vertex] at ({30 - 120*\i}:1) {};
        \node at ({30 - 120*\i}:1.4) {$\x$};
      }
    \end{tikzpicture}
    \caption{}
  \end{subfigure}  
  \;
  \begin{subfigure}[b]{.3\linewidth}
    \centering
    \begin{tikzpicture}[scale=.1]
      \begin{scope}[every path/.style={{|[left]}-{|[right]}}]        
        \draw[Sepia,thick] (120:13) arc (120.:-120:13); \draw (200:13) arc (200:160:13);
        \draw[Sepia,thick] (260:12) arc (260:-10:12); \draw (-40:12) arc (-40.:-80:12);
        \draw[Sepia,thick] (370:11) arc (370:100:11); \draw (80:11) arc (80:40:11);
      \end{scope}
      \draw[dashed,thin] (10,0) arc (0:360:10);
      \foreach[count=\j] \i/\c in {40/, -120/Sepia, -80/, 100/Sepia, 160/, -10/Sepia} {
        \draw[dashed] (\i:11) -- (\i:10);
        \node[\c] at (\i:8) {${\j}$};
      }
    \end{tikzpicture}
    \caption{}
  \end{subfigure}  
  \;
  \begin{subfigure}[b]{.3\linewidth}
    \centering
    \begin{tikzpicture}[scale=.1]
      \begin{scope}
        \foreach[count=\j from 0] \p/\q in {6/3, 2/5, 4/1} {
          \pgfmathsetmacro{\radius}{11+\j}
          \pgfmathsetmacro{\span}{180}
          \pgfmathsetmacro{\start}{240 - 120*\j}
          \draw[{|[left]}-{|[right]}, thick, Sepia]  (\start:\radius) arc (\start:{\start-\span}:\radius); 
          \draw[Sepia, dashed] ({\start}:\radius) -- ({\start}:10);
          \node[Sepia] at ({\start}:8) {${\p}$};

          \pgfmathsetmacro{\span}{20}
          \pgfmathsetmacro{\start}{340 - 120*\j}
          \draw[{|[left]}-{|[right]}]  (\start:\radius) arc (\start:{\start-\span}:\radius);
          \draw[dashed] ({\start}:\radius) -- ({\start}:10);
          \node at ({\start}:8) {${\q}$};
        }
      \end{scope}
      \draw[dashed,thin] (10,0) arc (0:360:10);
    \end{tikzpicture}
    \caption{}
  \end{subfigure}  
  \caption{\small A circular-arc graph and its two circular-arc models.  In (b), any two arcs for vertices~$\{2, 4, 6\}$ cover the circle; in (c), the three arcs for vertices~$\{2, 4, 6\}$ do not share any common point.}
  \label{fig:normal-and-helly}
\end{figure}

Although circular-arc graphs and interval graphs are defined in a quite similar way,
they turn out to have significantly different algorithmic and combinatorial properties.
A number of problems that are solved (or shown to admit polynomial-time solutions) 
in the class of interval graphs, are still open (are computationally hard, respectively) in the class of circular-arc graphs.
One example is the minimum coloring problem, 
which admits a simple linear algorithm 
for interval graphs, but is NP-complete on circular-arc graphs~\cite{GareyJMP80}.
Another example is the problem of determining all minimal \emph{forbidden induced subgraphs}, i.e., minimal graphs that are not in the class.
For interval graphs, the list of minimal forbidden induced subgraphs was compiled by Lekkerkerker and Boland~\cite{lekkerkerker-62-interval-graphs} in 1960's: it contains the holes---induced cycles of length at least four---and graphs shown in Figure~\ref{fig:non-interval}.
\begin{theorem}[\cite{lekkerkerker-62-interval-graphs}]
\label{thm:lb}
  A graph~$G$ is an interval graph if and only if it does not contain any hole or any graph in Figure~\ref{fig:non-interval} as an induced subgraph.
\end{theorem}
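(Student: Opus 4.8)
The plan is to route the whole argument through \emph{asteroidal triples}. Call a set $\{a,b,c\}$ of three pairwise non-adjacent vertices an \emph{asteroidal triple} if for every two of them there is a path joining them that avoids the closed neighbourhood of the third. Since a hole is exactly an induced cycle of length at least four, the hypothesis ``$G$ has no hole'' is just ``$G$ is chordal''. With this in mind I would split Theorem~\ref{thm:lb} into two statements proved separately: (a) a graph is an interval graph if and only if it is chordal and has no asteroidal triple; and (b) the graphs in Figure~\ref{fig:non-interval} are exactly the minimal chordal graphs that contain an asteroidal triple (minimal under taking induced subgraphs). Granting (a) and (b), Theorem~\ref{thm:lb} follows: if $G$ is an interval graph then it is chordal, hence hole-free, and has no asteroidal triple, hence by the minimality in (b) contains none of the depicted graphs; conversely, if $G$ is hole-free it is chordal, and if it moreover contains none of the depicted graphs then by (b) it has no asteroidal triple, so by (a) it is an interval graph.

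For (a), the ``only if'' direction is short. Fix an interval model of $G$. For chordality, among the intervals representing a supposed induced cycle of length at least four, the one whose right endpoint is leftmost is met by the intervals of its two cycle-neighbours, which must then both contain that right endpoint and hence meet each other, a contradiction. For the absence of an asteroidal triple, three pairwise non-adjacent vertices have pairwise disjoint intervals, hence occur from left to right in some order $I_a, I_b, I_c$; any $a$--$c$ path yields a connected union of intervals meeting both $I_a$ and $I_c$, which therefore also meets the ``middle'' interval $I_b$, so that path runs through $N[b]$. The ``if'' direction --- every chordal graph without an asteroidal triple is an interval graph --- I would prove using a clique tree of $G$, i.e., a tree on the maximal cliques of $G$ in which, for every vertex, the cliques containing it span a subtree (equivalently one may invoke the Gilmore--Hoffman criterion and produce a consecutive linear ordering of the maximal cliques). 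The point is that a branching node of a clique tree, chosen with some care, supplies three vertices lying far out in three different branches that form an asteroidal triple; so AT-freeness forces a clique tree that is a path, and such a path immediately gives an interval model (assign to each vertex the interval spanned by the cliques containing it).

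Assertion (b) is the core of the theorem and the step I expect to be by far the most laborious. One half is a finite verification: each graph in Figure~\ref{fig:non-interval} is chordal and carries an asteroidal triple --- for the subdivided claw, the three vertices of degree one; for the remaining graphs and families, equally short checks --- and deleting any one of its vertices leaves no asteroidal triple. For the other half I would let $H$ be an arbitrary minimal chordal graph containing an asteroidal triple and show that $H$ is isomorphic to one of the listed graphs. Minimality makes $H$ connected and, via part (a), makes $H - v$ an interval graph for every vertex $v$. Fixing an asteroidal triple $\{a,b,c\}$ in $H$ together with shortest connecting paths $P_a, P_b, P_c$ --- where $P_a$ joins $b$ to $c$ off $N[a]$, and so on --- I would run a case analysis driven by how the three paths meet and attach to each other and to $a,b,c$, and by their lengths, using repeatedly that $H$ has no induced cycle of length at least four to bound these lengths, to forbid almost all chords, and to restrict the attachments; minimality then eliminates the remaining freedom and identifies $H$ with one of the configurations in Figure~\ref{fig:non-interval}. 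This case analysis is long and delicate --- in essence it reproduces Lekkerkerker and Boland's original argument --- and it, rather than the comparatively short step (a) or the first half of (b), is where the real difficulty lies.
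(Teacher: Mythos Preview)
The paper does not prove Theorem~\ref{thm:lb} at all: it is stated with a citation to Lekkerkerker and Boland~\cite{lekkerkerker-62-interval-graphs} and used as background. There is therefore no ``paper's own proof'' to compare your proposal against.

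That said, your outline is the standard route to this result and is essentially the structure of the original Lekkerkerker--Boland argument: reduce to chordal graphs, characterise interval graphs among chordal graphs by the absence of asteroidal triples, and then classify the minimal chordal graphs carrying an asteroidal triple. Your sketch of part~(a) is fine. You are also right that the substance lies in part~(b), and you are candid that you are not actually carrying it out; as written, your proposal is a plan rather than a proof, with the hard classification step deferred to ``reproduces Lekkerkerker and Boland's original argument''. For the purposes of this paper that is immaterial, since the theorem is quoted rather than proved, but if you intend this as a standalone proof you would need to execute the case analysis in~(b) in full.
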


\begin{figure}[ht]
  \tikzstyle{every node}=[empty vertex]
  \centering \small
  \begin{subfigure}[b]{0.22\linewidth}
    \centering
    \begin{tikzpicture}[xscale=.6, yscale=.6]
      \node (c) at (0, 0) {};
      \foreach[count=\i] \p in {below, right, below} {
        \node (u\i) at ({90*(3-\i)}:2) {};
        \node (v\i) at ({90*(3-\i)}:1) {};
        \draw (u\i) -- (v\i) -- (c);
      }
    \end{tikzpicture}
    \caption{long claw}\label{fig:long-claw-unlabeled}
  \end{subfigure}
  \,
  \begin{subfigure}[b]{0.22\linewidth}
    \centering
    \begin{tikzpicture}[xscale=.6, yscale=.7]
      \node (v7) at (0, -1) {};
      \draw (-2, 0) -- (2, 0);
      \foreach[count=\i from 2] \v/\p in {x_{1}/above, v_{1}/above, v_{2}/above right, v_{3}/above, x_{3}/above} {
        \node (v\i) at ({\i - 4}, 0) {};
        \draw (v\i) -- (v7);
      }
      \node (v1) at (0, .75) {};
      \draw (v4) -- (v1);      
    \end{tikzpicture}
    \caption{whipping top}\label{fig:whipping-top-unlabeled}
  \end{subfigure}
  \,
  \begin{subfigure}[b]{0.22\linewidth}
    \centering
    \begin{tikzpicture}[scale=.8]
      \draw (-1.5, 0) -- (0, 0) edge[dashed] (1.5, 0);
      \draw (0, 1.75) node {} -- (0, 1) node (c) {} -- (0, 0) node {};
      \foreach[count =\i] \y in {1, 3} {
        \node (u\i) at ({3 * \i - 4.5}, 0) {};
        \node (v\i) at ({2 * \i - 3}, 0) {};
        \draw (u\i) -- (v\i) -- (c);
      }
    \end{tikzpicture}
    \caption{\dag{}}\label{fig:dag} 
  \end{subfigure}
  \,
  \begin{subfigure}[b]{0.22\linewidth}
    \centering
    \begin{tikzpicture}[scale=.8]
      \draw[dashed] (-1.5, 0) -- (1.5, 0);
      \node (v3) at (0, 0) {};
      \foreach[count =\i, evaluate={\x=int(2*\i - 3);}] \a/\b in {1/1, p/3} {
        \node (u\i) at ({1.5 * \x}, 0) {};
        \node (v\i) at ({1. * \x}, 0) {};
        \node (c\i) at ({.35 * \x}, 1) {};
        \draw (u\i) -- (v\i) -- (c\i) -- (u\i);
      }
      \foreach \i in {1, 2} {
        \foreach \j in {1, 2, 3}
        \draw (c\i) -- (v\j);
      }
      \draw (0, 1.75) node (x) {} -- (c1) -- (c2) -- (x);
    \end{tikzpicture}
    \caption{\ddag{}}\label{fig:ddag}  
  \end{subfigure}
  \caption{Minimal graphs that are not interval graphs.  A~$\dag$ graph or a~$\ddag$ graph contains at least six vertices.}
  \label{fig:non-interval}
\end{figure}
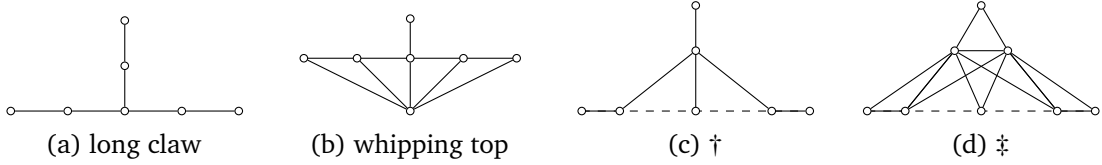

The same problem on circular-arc graphs, however, has been open for sixty years~\cite{hadwiger-64-combinatorial-geometry, klee-69-cag}.

\begin{problem}
  \label{prob:main_problem}
Determine the list of minimal forbidden induced subgraphs of circular-arc graphs.
\end{problem}

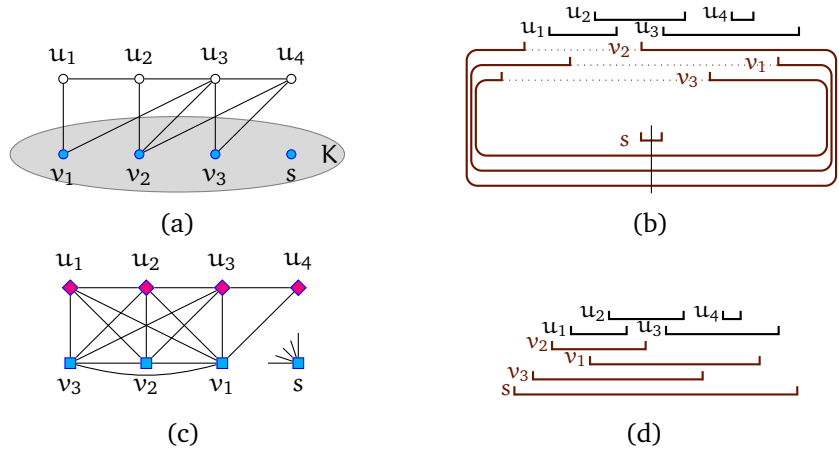
\begin{figure}[ht]
  \centering \small
  \begin{subfigure}[b]{0.35\linewidth}
    \centering
    \begin{tikzpicture}\draw[fill = gray!30, draw = gray] (2.5, 0) ellipse [x radius=2.2, y radius=0.5];
\draw (4, 0) node[gf-vertex] (s) {} ++(0, -7pt) node {$s$};
      \foreach \i in {1, 2, 3}
\draw (\i, 0) node[gf-vertex] (v\i) {} ++(0, -7pt) node {$v_\i$};
      \foreach \i in {1, 2, 3, 4}
      \node[gnf-vertex, "$u_\i$"] (u\i) at (\i, 1) {};
      \foreach[count=\i] \list in {{1, 3}, {2, 3, 4}, {3, 4}}
      \foreach \x in \list \draw (u\x) -- (v\i);

      \draw (u1) -- (u2) -- (u3) -- (u4);
      \node at (4.5, 0) {$K$};
    \end{tikzpicture}
    \caption{}\label{fig:ca-graph}
  \end{subfigure}
  \,
  \begin{subfigure}[b]{0.4\linewidth}
    \centering
    \begin{tikzpicture}[xscale=.3]
        \def\leftend{0}
        \def\rightend{15}
        \pgfmathsetmacro{\middle}{(\leftend + \rightend)/2}
        \def\bottomend{-.6}
        
      \begin{scope}[every path/.style={{|[right]}-{|[left]}}]
        \foreach[count=\i] \l/\r/\y in {3/6/4, 5/9/5, 8/14/4, 11/12/5}{
          \draw[thick] (\l-.02, \y/5) node[left, xshift=3pt, yshift=2pt] {\footnotesize $u_\i$} to (\r+.02, \y/5);
        }
      \draw[Sepia, thick] (\middle-.5, \bottomend) node[left] {\footnotesize $s$} to (\middle+.5, \bottomend);
      \end{scope}
        \foreach[count=\i] \l/\r/\y in {4/13/2, 2/7/3, 1/10/1}{
          \draw[Sepia, dotted] (\l-.02, \y/5) to (\r+.02, \y/5) node[left] {\footnotesize $v_\i$};
          \draw[{|[right]}-{|[left]}, Sepia, thick, rounded corners] (\r+.02, \y/5) -- ({\rightend + \y/5}, \y/5) --({\rightend + \y/5}, \bottomend-\y/5)--(\leftend - \y/5, \bottomend-\y/5)--(\leftend - \y/5, \y/5)--(\l-.02, \y/5);          
        }
\draw[] (\middle, \bottomend+.2) -- (\middle, \bottomend-.7);
      \end{tikzpicture}
    \caption{}\label{fig:normalized_model}
  \end{subfigure}

  \begin{subfigure}[b]{0.35\linewidth}
    \centering
    \begin{tikzpicture}\node[a-vertex, "$s$" below] (s) at (4, 0) {};
      \foreach[count=\i] \n in {3, 2, 1}
      \node[a-vertex, "$v_\n$" below] (v\n) at (\i, 0) {};
      \foreach[count=\i] \n in {1, 2, 3, 4}
      \node[b-vertex, "$u_\n$"] (u\n) at (\i, 1) {};
      \foreach[count=\i] \list in {{1, 2, 3, 4}, {1, 2, 3}, {1, 2, 3}}
      \foreach \x in \list \draw (u\x) -- (v\i);

      \draw (u1) -- (u2) -- (u3) -- (u4);
      \draw (v1) -- (v2) -- (v3);
      \draw[bend left=15] (v1) edge (v3);
      \foreach \i in {0, ..., 4}
      \draw (s) -- ++(\i*.1 - .4, \i*.1);
    \end{tikzpicture}
    \caption{}\label{fig:flipped-graph}
  \end{subfigure}
  \begin{subfigure}[b]{0.4\linewidth}
    \centering
    \begin{tikzpicture}[xscale=.25]
        \def\leftend{0}
        \def\rightend{15}
        \pgfmathsetmacro{\middle}{(\leftend + \rightend)/2}
        \def\bottomend{-.6}
        
      \begin{scope}[every path/.style={{|[right]}-{|[left]}}]
        \foreach[count=\i] \l/\r/\y in {3/6/4, 5/9/5, 8/14/4, 11/12/5}{
          \draw[thick] (\l-.02, \y/5) node[left, xshift=3pt, yshift=2pt] {\footnotesize $u_\i$} to (\r+.02, \y/5);
        }
      \draw[Sepia, thick] (\leftend, 0) node[left, xshift=3pt, yshift=2pt] {\footnotesize $s$} to (\rightend, 0);
        \foreach[count=\i] \l/\r/\y in {4/13/2, 2/7/3, 1/10/1}{
          \draw[Sepia, thick] (\l-.02, \y/5) node[left, xshift=3pt, yshift=2pt] {\footnotesize $v_\i$} to (\r+.02, \y/5);
        }
      \end{scope}
\end{tikzpicture}
    \caption{}\label{fig:interval_model}
  \end{subfigure}
  \caption{Illustration for McConnell's transformation.
    (a) A circular-arc graph $G$, where edges among the vertices in the clique $K$ (in the shadowed area) are omitted for clarity; (b) a normalized circular-arc model of~$G$; (c) the interval graph $G^K$, where edges incident to the universal vertex~$s$ are omitted for clarity; and (d) the interval model of~$G^K$ derived from (b) by flipping the arcs containing the center of the bottom.  
  }
  \label{fig:McConnell-transformation}
\end{figure}
 
The main goal of this paper is to make progress toward Problem~\ref{prob:main_problem}.
Our approach to Problem~\ref{prob:main_problem} has its roots in the work of McConnell~\cite{mcconnell-03-recognition-cag}, where the recognition of circular-arc graphs is reduced to the recognition of certain interval graphs.
Let us briefly describe the approach taken by McConnell.
A vertex $v \in V(G)$ is \emph{universal} in $G$ if $N[v] = V(G)$.
Let~$G$ be a circular-arc graph with no universal vertices and~$\mathcal{A}$ a fixed circular-arc model of~$G$.\footnote{Universal vertices have no impact on our problem.  However, the approach we take requires the nonexistence of universal vertices.}
By \emph{flipping} an arc~$[\lp, \rp]$ we replace it with arc~$[\rp, \lp]$ --- note that flipping is different from complementing, which ends with open arcs, and we prefer to stick to closed arcs and closed intervals for simplicity.
If we flip all arcs containing some fixed point of the circle, we end with an interval model~$\mathcal{I}$; see Figure~\ref{fig:McConnell-transformation} for an illustration.
A crucial observation of McConnell~\cite{mcconnell-03-recognition-cag} is that the resulting interval graph is decided by the set~$K$ of vertices whose arcs are flipped and not by the original circular-arc model, as long as it is normalized.
Thus, it makes sense to denote it by $G^K$.  We defer the definition of normalized models and~$G^K$ to Section~\ref{sec:McConnell-transformation}.
Note that~$K$ is a \emph{clique} of~$G$, i.e., all the edges among them are present in~$G$.

However, the graph~$G^K$ being an interval graph does not imply that~$G$ is a circular-arc graph.
The graph~$G$ is a circular-arc graph if and only if there is a clique~$K$ such that the graph~$G^K$ \begin{equation}
  \label{eq:sharp}
  \begin{array}{c}
    \text{admits an interval model in which for every pair of vertices~$v\in K$ and~$u\in V(G)\setminus K$, } \\
    \text{the interval for~$v$ doubly extends the interval for~$u$ if and only if~$uv\not\in E(G)$.}
  \end{array}
\tag{$\sharp$}
\end{equation}
In the above, the interval $I(v) = [\lp(v), \rp(v)]$ \emph{doubly extends} the interval $I(u) = [\lp(u),\rp(u)]$, written $I(u) \strictsubset I(v)$, if $\lp(v) < \lp(u) \leq \rp(u) < \rp(v)$.
Note that double extension is stronger than proper containment, which allows the two arcs to share a common endpoint.  The purpose of double extension is to ensure that flipping~$I(v)$ make it disjoint from~$I(u)$.
This approach can be summarized with the following theorem.
\begin{theorem}\label{thm:correlation}
  A graph $G$ with no universal vertices is a circular-arc graph if and only if there exists a clique~$K$ such that the graph~$G^{K}$ satisfies~\eqref{eq:sharp}.
\end{theorem}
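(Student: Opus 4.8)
The plan is to derive both implications from the fact that the flipping operation, performed on a single model, is invertible and interacts with incidences in a controlled way. Throughout, identify the circle with the real line together with one extra point $p$ ``at infinity'', so that an arc through $p$, when the circle is cut at $p$, becomes the complement of a bounded interval, and conversely. The elementary fact I would isolate first is this: if $J(v)=[a,b]$ is a bounded interval and $A(v)$ denotes the arc obtained from it by flipping (the circle with the open interval $(a,b)$ removed, so that $A(v)$ passes through $p$), then for any bounded interval $J(u)=[c,d]$ one has $A(v)\cap J(u)=\emptyset$ precisely when $a<c\le d<b$, i.e.\ precisely when $J(u)\strictsubset J(v)$. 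In addition, two flipped arcs both contain $p$ and therefore always meet, while two non-flipped bounded arcs meet on the circle if and only if the underlying intervals meet on the line. These observations cover the three possible types of pairs once a clique is flipped.

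For the forward direction, let $G$ be a circular-arc graph with no universal vertices. Fix a normalized circular-arc model $\mathcal A$ of $G$, which exists by the discussion in Section~\ref{sec:McConnell-transformation}, and pick a point $p$ on the circle distinct from every arc endpoint. Let $K$ be the set of vertices whose arcs contain $p$; these arcs share the point $p$, so $K$ is a clique of $G$. Flipping the arcs of $K$ and cutting the circle at $p$ yields an interval model $\mathcal I$, which represents $G^K$ by the construction of $G^K$ recalled in Section~\ref{sec:McConnell-transformation}. I then claim $\mathcal I$ witnesses~\eqref{eq:sharp}: for $v\in K$ and $u\in V(G)\setminus K$ the arc of $u$ avoids $p$, so $uv\notin E(G)$ if and only if the arc of $u$ is contained in the (open) complement of the arc of $v$, which by the elementary fact translates, after the flip, into $I(u)\strictsubset I(v)$ in $\mathcal I$. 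Hence $G^K$ satisfies~\eqref{eq:sharp}.

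For the converse, suppose $K$ is a clique of $G$ for which $G^K$ has an interval model $\mathcal I$ satisfying~\eqref{eq:sharp}. I would reconstruct a circular-arc model $\mathcal A'$ of $G$ by ``unflipping'': bend the line carrying $\mathcal I$ into a circle by adjoining a point $p$ beyond all intervals (so $p$ lies in no interval), keep the interval of each $u\in V(G)\setminus K$ as an arc, and replace the interval $I(v)$ of each $v\in K$ by the arc through $p$ obtained by flipping it. One then checks that $\mathcal A'$ represents $G$ by running through the three types of pairs: vertices $u,u'\in V(G)\setminus K$ are adjacent in $\mathcal A'$ iff their unchanged intervals meet in $\mathcal I$ iff they are adjacent in $G^K$ iff they are adjacent in $G$ (using that $G^K$ and $G$ agree on $V(G)\setminus K$, Section~\ref{sec:McConnell-transformation}); vertices $v,v'\in K$ have arcs both through $p$, hence adjacent, matching that $K$ is a clique of $G$; and for $v\in K$, $u\in V(G)\setminus K$ the elementary fact gives that the arcs of $v$ and $u$ are disjoint in $\mathcal A'$ iff $I(u)\strictsubset I(v)$ in $\mathcal I$ iff $uv\notin E(G)$ by~\eqref{eq:sharp}. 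Thus $\mathcal A'$ is a circular-arc model of $G$.

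The substantive points are two pieces of bookkeeping. First, it is essential that~\eqref{eq:sharp} speaks of \emph{double} extension rather than containment: the strict inequalities $\lp(v)<\lp(u)$ and $\rp(u)<\rp(v)$ are exactly what is needed to make the flipped arc of $v$ genuinely disjoint from the arc of $u$, and conversely disjointness on the circle forces strictness, so one must ensure endpoints never coincide --- which is why $p$ is taken off all endpoints in the forward direction, and is automatic from the definition of $\strictsubset$ in the converse. Second, the forward direction rests on the facts, set up where $G^K$ is introduced in Section~\ref{sec:McConnell-transformation}, that every circular-arc graph without universal vertices has a normalized model and that cutting any such model at any point produces exactly $G^K$ (independently of the chosen model); I expect that machinery, rather than this proof itself, to be where the real work lies.
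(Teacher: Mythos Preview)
Your proposal is correct and follows essentially the same approach as the paper. The paper packages the forward direction as Lemma~\ref{lem:construction} (taking a normalized model, a point $P$, and $K$ the vertices whose arcs contain $P$, then verifying that flipping yields an interval model of $G^K$ satisfying~\eqref{eq:sharp}), and proves the converse exactly as you do, by placing the intervals on a circle and flipping those in $K$, then checking the three types of pairs; your ``elementary fact'' about $I(u)\strictsubset I(v)$ versus disjointness of the flipped arc is precisely the bookkeeping the paper carries out inline.
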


\begin{figure}[h]
  \centering \small
  \begin{subfigure}[b]{0.2\linewidth}
    \centering
    \begin{tikzpicture}[scale=.75]
      \node[a-vertex] (v) at (2, 1) {};
      \foreach \i in {1, 2, 3}
        \node[b-vertex] (u\i) at (\i, 0) {};
      \foreach \i in {1, 2, 3}
        \draw[very thick, teal] (v) -- (u\i);
    \end{tikzpicture}
    \caption{}
    \label{fig:claw-unlabeled}
  \end{subfigure}
  \begin{subfigure}[b]{0.2\linewidth}
    \centering
    \begin{tikzpicture}[scale=.75]
      \foreach \i in {1, 2}
      \node[a-vertex] (v\i) at (.5+\i, 1) {};
      \foreach \i in {1, 2, 3} {
        \node[b-vertex] (u\i) at (\i, 0) {};
}
      \draw (v1) -- (v2);
      \draw (v1) -- (u3) (v2) -- (u1);
      \draw[witnessed edge] (v1) -- (u1) (v2) -- (u2) (v2) -- (u3);
      \uncertain{u2}{v1};
    \end{tikzpicture}
    \caption{}
    \label{fig:double-claw-unlabeled}
  \end{subfigure}
  \begin{subfigure}[b]{0.2\linewidth}
    \centering
    \begin{tikzpicture}[scale=.75]
      \foreach \i in {1, 2, 3}
      \node[a-vertex] (v\i) at (\i, 1) {};
      \foreach \i in {1, 2, 3}
        \node[b-vertex] (u\i) at (\i, 0) {};
      \foreach \i in {1, 2, 3}
      \foreach \j in {1, 2, 3} {
        \ifnum \i=\j
          \def\c{witnessed edge}
        \else
          \def\c{}
        \fi
        \draw[\c] (v\i) -- (u\j);
        }
      \draw (v1) -- (v2) -- (v3);
      \draw[bend left=20] (v1) edge (v3);
    \end{tikzpicture}
    \caption{}
    \label{fig:triple-claw-unlabeled}
  \end{subfigure}
  \begin{subfigure}[b]{0.2\linewidth}
    \centering
    \begin{tikzpicture}[scale=.75]
      \draw (-2.,0) node[empty vertex] (x1) {} -- (2.,0) node[empty vertex] (x2) {};
      \foreach[count=\i] \l/\t in {v_1/empty , v/a-, v_2/empty }
      \node[\t vertex] (v\i) at ({\i*1-2}, 0) {};
      \node[b-vertex] (u) at (90:1) {};
      \draw[witnessed edge] (u) -- (v2);
    \end{tikzpicture}
    \caption{}
    \label{fig:p5+1-unlabeled}
  \end{subfigure}

  \begin{subfigure}[b]{0.2\linewidth}
    \centering
    \begin{tikzpicture}[scale=.75]
      \node[a-vertex] (v) at (3, 0) {};
      \foreach \i in {1, 2} {
        \node[rotate={180*\i}, b-vertex] (u\i) at (4, {1.5-\i}) {};
        \draw[witnessed edge] (v) -- (u\i);
      }
      \foreach[count=\i] \t in {b-, empty } {
        \node[empty vertex] (x\i) at ({\i},0) {};
      }
      \draw (v)--(x2)--(x1);    
    \end{tikzpicture}
    \caption{}
    \label{fig:fork-unlabeled}
  \end{subfigure}
  \begin{subfigure}[b]{0.2\linewidth}
    \centering
    \begin{tikzpicture}[scale=.75]
      \foreach[count=\i] \t in {b-, empty } {
        \node[empty vertex] (x\i) at ({\i},0) {};
      }
      \foreach \i in {1, 2} {
        \node[a-vertex] (v\i) at (3, {1.5-\i}) {};
        \node[b-vertex] (u\i) at (4, {1.5-\i}) {};
        \draw (x2) -- (v\i);
        \draw[witnessed edge] (v\i) -- (u\i);
      }
      \draw (x2)--(x1)  (v2) -- (v1) -- (u2);
      \uncertain{u1}{v2};
    \end{tikzpicture}
    \caption{}
    \label{fig:double-fork-unlabeled}
  \end{subfigure}
  \begin{subfigure}[b]{0.2\linewidth}
    \centering
    \begin{tikzpicture}[scale=.75]
      \foreach[count=\i] \t in {b-, empty } {
        \node[empty vertex] (x\i) at ({\i},0) {};
      }
      \foreach \i in {1, 2} {
        \node[a-vertex] (v\i) at (3, {1.5-\i}) {};
        \node[b-vertex] (u\i) at (4, {1.5-\i}) {};
        \draw (x2) -- (v\i);
        \draw[witnessed edge] (v\i) -- (u\i);
      }
      \draw (x2)--(x1)--(v2)  (v2) -- (v1) -- (u2);
      \uncertain{u1}{v2};
    \end{tikzpicture}
    \caption{}
    \label{fig:double-fork+1-unlabeled}
  \end{subfigure}
  \begin{subfigure}[b]{0.2\linewidth}
    \centering
    \begin{tikzpicture}[scale=.75]
      \draw (-2.,0) -- (2.,0);
      \node[a-vertex] (a) at (0, 1.) {};
      \node[b-vertex] (u) at (0, 0) {};
      \draw[witnessed edge] (a) -- (u);
      \node[b-vertex] (c) at (0, 0) {};
      \foreach[count=\j] \x in {-2, -1, 1, 2} {
        \draw (a) -- (\x, 0) node[empty vertex] (v\j) {};
      }
     \end{tikzpicture}
    \caption{}
    \label{fig:p5x1-unlabeled}
  \end{subfigure}

  \begin{subfigure}[b]{0.2\linewidth}
    \centering
    \begin{tikzpicture}[scale=.75]
      \draw (1, 0) -- (4, 0);
      \node[a-vertex] (a) at (2.5, 1) {};
      \draw (a) -- ++(0, .75) node[empty vertex] {};

      \foreach[count=\i] \t/\l in {empty /x_{1}, b-/u_{1}, b-/u_{2}, empty /x_{2}} {
        \draw (a) -- (\i, 0) node[\t vertex] (u\i) {};
      }
      \foreach \i in {2, 3}
      \draw[witnessed edge] (a) -- (u\i);
    \end{tikzpicture}
    \caption{}
    \label{fig:(p4+p1)*1-unlabeled}
  \end{subfigure}
  \begin{subfigure}[b]{0.2\linewidth}
    \centering
    \begin{tikzpicture}[scale=.75]
      \draw (1, 0) -- (4, 0);
      \foreach[count=\x from 2] \t/\l/\p in {b-/u/above, empty /x_{3}/below}  \node[\t vertex] (\l) at (\x, 1) {};
      
      \foreach[count=\i] \t/\l in {empty /x_{1}, a-/v_{1}, a-/v_{2}, empty /x_{2}} {
        \node[\t vertex] (u\i) at (\i, 0) {};
      }
      \foreach \i in {2, 3} \draw (u) edge[witnessed edge] (u\i) (u\i) -- (x_{3});
    \end{tikzpicture}
    \caption{}
    \label{fig:ab-wheel-unlabeled}
  \end{subfigure}
  \begin{subfigure}[b]{0.2\linewidth}
    \centering
    \begin{tikzpicture}[scale=.75]
      \draw (1, 1) -- (4, 1);

       \node[empty vertex] (v4) at (3, 0.2) {};
      \foreach[count=\i] \t/\v/\x in {empty /x_1/1, empty /x_2/2, a-/{\quad v}/3.5, b-/u/5} {
        \draw (v4) -- (\i, 1) node[\t vertex] (u\i) {};
      }

      \draw (u3) -- ++(0, .85) node[empty vertex] (x2) {};
      \draw[witnessed edge] (u3) -- (u4);
     \end{tikzpicture}
    \caption{}
    \label{fig:whipping-top-1-unlabeled}
  \end{subfigure}
  \begin{subfigure}[b]{0.2\linewidth}
    \centering
    \begin{tikzpicture}[scale=.75]
      \draw (-2.,0) -- (2.,0);
      \node[a-vertex] (a) at (0, 1.) {};
      \node[b-vertex] (u) at (0.75, 1) {};
      \foreach[count=\j] \x in {-2, -1, 0, 1, 2} {
        \draw (a) -- (\x, 0) node[empty vertex] (x\j) {};
      }
      \draw (a) edge[witnessed edge] (u) (u) edge (x3);
     \end{tikzpicture}
    \caption{}
    \label{fig:bent-whipping-top-unlabeled}
  \end{subfigure}

  \begin{subfigure}[b]{0.2\linewidth}
    \centering
    \begin{tikzpicture}[scale=.75]
      \draw (1, 0) -- (2, 0);
      \draw[dashed] (2, 0) -- (3, 0);
      \node[a-vertex] (a) at (2, 1) {};
      \draw (a) -- ++(0, .75) node[empty vertex] {};

      \foreach[count=\i] \t/\l in {b-/u_{1}, empty /x_{1}, b-/u_{2}} {
        \draw (a) -- (\i, 0) node[\t vertex] (u\i) {};
      }
      \foreach \i in {1, 3}
      \draw[witnessed edge] (a) -- (u\i);
    \end{tikzpicture}
    \caption{}
    \label{fig:dag+2e-unlabeled}
  \end{subfigure}
  \begin{subfigure}[b]{0.2\linewidth}
    \centering
    \begin{tikzpicture}[scale=.75]
      \node[a-vertex] (x) at (3., 1) {};
      \draw (x) -- ++(0, .75) node[empty vertex] {};

      \draw (1, 0) -- (3, 0);
      \foreach[count=\i] \t/\v in {empty /x_{1}, empty /x_{2}, empty /x_{3}, b-/u} {
        \node[\t vertex] (u\i) at (\i, 0) {};
      }
      \draw[dashed] (u3) -- (u4);
      \foreach \i in {2, 3} \draw (x) -- (u\i);
      \draw[witnessed edge] (x) -- (u4) {};
    \end{tikzpicture}
    \caption{}
    \label{fig:dag+e-unlabeled}
  \end{subfigure}
  \begin{subfigure}[b]{0.2\linewidth}
    \centering
    \begin{tikzpicture}[scale=.75]
      \node[empty vertex] (x1) at (2, 1.75) {};
      \node[empty vertex] (x2) at (2.5, 1) {};
      \foreach \i in {3, 4} 
      \node[empty vertex] (x\i) at (\i-2, 0) {};
      \node[a-vertex] (v) at (1.5, 1) {};
      \node[b-vertex] (x5) at (3., 0) {};

      \foreach \i in {1, 4, 5} \draw (x2) -- (x\i);
      \foreach \i in {1, ..., 5} \draw (v) -- (x\i);
      \draw (x3) -- (x4) (x4) edge[dashed] (x5);
      \draw[witnessed edge] (v) -- (x5) {};
    \end{tikzpicture}
    \caption{}
    \label{fig:ddag+e-unlabeled}
  \end{subfigure}
  \begin{subfigure}[b]{0.2\linewidth}
    \centering
    \begin{tikzpicture}[scale=.75]
      \node[empty vertex] (x0) at (0, 2.75) {};
      \node[empty vertex] (u) at (0, 1) {};
      \foreach \i in {1, 2} {
        \node[rotate={180*\i}, a-vertex] (v\i) at ({1.5-\i}, 2) {};
        \node[rotate={180*\i}, b-vertex] (x\i) at ({(1.5-\i)*2}, 1) {};
      }
      \draw (u) -- (x2) (v1) -- (v2) (x1) edge[dashed] (u);
      \foreach \i in {1, 2} {
        \foreach \j in {1, 2} 
        \draw (u)--(v\i)--(x\j);
        \draw (v\the\numexpr3-\i\relax) -- (x0);
        \draw[witnessed edge] (v\i) -- (x\the\numexpr3-\i\relax);
      }
      \draw (v1)--(x1);
    \end{tikzpicture}
    \caption{}
    \label{fig:ddag+2e-unlabeled}
  \end{subfigure}
  
  \begin{subfigure}[b]{0.2\linewidth}
    \centering
    \begin{tikzpicture}[scale=.75]
      \node[empty vertex] (x0) at (0, 2.75) {};
      \foreach \i in {1, 2} {
        \node[rotate={180*\i}, a-vertex] (v\i) at ({1.5-\i}, 2) {};
        \node[rotate={180*\i}, b-vertex] (u\i) at ({1.5-\i}, 1) {};
        \node[rotate={180*\i}, empty vertex] (x\i) at ({(1.5-\i)*2.75}, 2) {};
        \draw (x\i) -- (v\i) -- (x0);
        \draw[witnessed edge] (v\i) -- (u\i);
      }
      \draw (u1)--(v2)--(v1)--(u2)--(u1);
    \end{tikzpicture}
    \caption{}
    \label{fig:add-1-unlabeled}
  \end{subfigure}
  \begin{subfigure}[b]{0.2\linewidth}
    \centering
    \begin{tikzpicture}[scale=.75]
      \node[empty vertex] (x0) at (0, 2.75) {};
      \foreach \i in {1, 2} {
        \node[rotate={180*\i}, a-vertex] (v\i) at ({1.5-\i}, 2) {};
        \node[rotate={180*\i}, b-vertex] (u\i) at ({1.5-\i}, 1) {};
        \node[rotate={180*\i}, empty vertex] (x\i) at ({(1.5-\i)*2.75}, 1) {};
        \draw (x\i) -- (v\i) -- (x0);
      }
      \begin{pgfonlayer}{bg}    \draw (x1) -- (x2) (v1) -- (v2);
      \end{pgfonlayer}
      \foreach \i in {1, 2} {
        \foreach \j in {1, 2} 
        \draw (u\j)--(v\i)--(x\j);
        \draw[witnessed edge] (v\i) -- (u\i);
      }
    \end{tikzpicture}
    \caption{}
    \label{fig:add-2-unlabeled}
  \end{subfigure}

  \caption{Interval configurations whose absence in the interval graph $G^K$ asserts condition~\eqref{eq:sharp} 
  in the case when $G$ is $C_4$-free.
  The square nodes are ``in $K$,'' rhombus ``not in $K$,'' and round ``uncertain.''
  Between a square node and a rhombus node, a thick edge is ``in $G$,'' a thin edge is ``not in $G$,'' and it is ``uncertain'' otherwise (a solid line and a dotted line).
There are at least five vertices in \ref{fig:dag+2e-unlabeled}, and at least six vertices in \ref{fig:dag+e-unlabeled}, \ref{fig:ddag+e-unlabeled}, and~\ref{fig:ddag+2e-unlabeled}.
}
  \label{fig:forbidden-configurations-C_4-free}
\end{figure}

We are thus motivated to understand when~$G^{K}$ satisfies Condition~\eqref{eq:sharp}.
Toward this, we discover a weaker condition~\eqref{eq:star} that is equivalent to~\eqref{eq:sharp} when~$G$ is~$C_{4}$-free.  A graph is~\emph{$H$-free} if it does not contain~$H$ as an induced subgraph, and~$C_{4}$ is simple cycle on four vertices.
More importantly, we are able to identify all the forbidden configurations, as shown in Figure~\ref{fig:forbidden-configurations-C_4-free}, whose absence asserts the weaker condition.
    
\begin{definition}[Annotations]
  In a \emph{configuration}, each vertex has one of the three annotations: in~$K$, not in~$K$, or uncertain; each edge between a vertex ``in~$K$'' and a vertex ``not in~$K$'' has one of the three annotations: in~$G$, not in~$G$, or uncertain.
\end{definition}

Note that there is no annotation on edges between two vertices ``in~$K$,'' between two vertices ``not in~$K$,'' or incident to an ``uncertain'' vertex; they are always ``in~$G$,'' ``in~$G$,'' and ``uncertain,'' respectively.
We say that the graph~$G^{K}$ \emph{contains (an annotated copy of)} configuration~$F$ if there exists an isomorphism~$\varphi$ between~$F$ and an induced subgraph of~$G^{K}$ such that
  \begin{itemize}
  \item if $v$ is annotated ``in~$K$'' (resp., ``not in~$K$'') then $\varphi(v)\in K$ (resp., $\varphi(v)\not\in K$); and
  \item if an edge~$v u$ is annotated ``in~$G$'' (resp., ``not in~$G$'') then~$\varphi(v) \varphi(u)\in E(G)$ (resp., ~$\varphi(v) \varphi(u)\in E(G)$).
  \end{itemize}

The weaker condition~\eqref{eq:star} can be characterized by forbidden configurations.
\begin{theorem}\label{thm:main-1}
  A $C_{4}$-free graph~$G$ with no universal vertices is a circular-arc graph if and only if there exists a clique~$K$ such that~$G^{K}$ \begin{equation}
    \label{eq:natural}
    \text{is an interval graph and it does not contain any configuration shown in Figure~\ref{fig:forbidden-configurations-C_4-free}.}
    \tag{$\vDash_{c}$}
  \end{equation}
\end{theorem}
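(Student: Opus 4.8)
The plan is to deduce Theorem~\ref{thm:main-1} from Theorem~\ref{thm:correlation} by establishing, for every $C_{4}$-free graph~$G$ without universal vertices and every clique~$K$ of~$G$, the equivalence
\[
  G^{K}\text{ satisfies }\eqref{eq:sharp}
  \quad\Longleftrightarrow\quad
  G^{K}\text{ satisfies }\eqref{eq:natural}.
\]
Since $G$ and $G^{K}$ agree on all pairs inside~$K$ and on all pairs inside $V(G)\setminus K$, the only content of~\eqref{eq:sharp} concerns, for $v\in K$ and $u\notin K$, whether the interval for~$v$ can be made to doubly extend the interval for~$u$ (and it must, exactly when $uv\notin E(G)$). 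I would therefore factor the equivalence through an intermediate, purely combinatorial ``local feasibility'' condition~$(\star)$, phrased in terms of $G^{K}$, the clique~$K$, and the non-edges of~$G$ between~$K$ and~$V(G)\setminus K$, and prove the chain $\eqref{eq:sharp}\Rightarrow(\star)\Rightarrow\eqref{eq:natural}\Rightarrow(\star)\Rightarrow\eqref{eq:sharp}$, where the first and last implications are the serious ones and the middle two are bookkeeping (any model witnessing~\eqref{eq:sharp} witnesses~$(\star)$; and the absence of the configurations of Figure~\ref{fig:forbidden-configurations-C_4-free} is designed to be exactly~$(\star)$ once $G^K$ is known to be an interval graph).

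For $\eqref{eq:sharp}\Rightarrow\eqref{eq:natural}$: an interval model meeting the double-extension requirement is in particular an interval model, so $G^{K}$ is an interval graph, and it remains to forbid each configuration of Figure~\ref{fig:forbidden-configurations-C_4-free}. I would handle these family by family --- the claw-like configurations \ref{fig:claw-unlabeled}--\ref{fig:triple-claw-unlabeled}, the $P_{5}$-with-pendant types, the fork-like ones, and the $\dag$-like and $\ddag$-like ones --- showing in each case that an annotated copy, together with the constraint ``$I(u)\strictsubset I(v)$ iff $uv\notin E(G)$'' on the $K$/$\overline{K}$ pairs involved, forces an impossible cyclic order on interval endpoints. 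These are the same phenomena that make the graphs of Figure~\ref{fig:non-interval} non-interval in Theorem~\ref{thm:lb}, but lifted to the refined ``interval-model-with-prescribed-double-extensions'' setting; each is a short finite check after fixing a normal form for the handful of intervals under discussion.

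For $\eqref{eq:natural}\Rightarrow\eqref{eq:sharp}$ --- the substantial direction --- I would begin from an arbitrary interval model of~$G^{K}$ and reshape it into one witnessing~\eqref{eq:sharp}, exploiting the degrees of freedom in realizing an interval graph: the admissible orderings of maximal cliques (encoded by the PQ-tree of~$G^{K}$) together with the local freedom to nest, or merely touch, intervals sharing a clique. The goal is a single model in which \emph{every} required double extension $I(u)\strictsubset I(v)$ (for $v\in K$, $u\notin K$, $uv\notin E(G)$) holds and every forbidden one fails. I would proceed in two moves: (i) assuming~$(\star)$, describe an explicit surgery on the PQ-tree and on endpoint positions that simultaneously realizes all the constraints; and (ii) assuming $\lnot(\star)$, show that $C_{4}$-freeness forces the obstruction to manifest as a bounded local pattern, which one verifies is precisely one of the configurations of Figure~\ref{fig:forbidden-configurations-C_4-free}. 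Combined with $\eqref{eq:sharp}\Rightarrow(\star)$, this closes the equivalence and, via Theorem~\ref{thm:correlation}, yields Theorem~\ref{thm:main-1}.

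The main obstacle is step (ii): certifying that whenever the constraint system is infeasible, one of the \emph{finitely many} configurations of Figure~\ref{fig:forbidden-configurations-C_4-free} must already appear in~$G^{K}$. This is exactly where $C_{4}$-freeness is indispensable --- it is what keeps the list of minimal obstructions finite and makes it coincide with the figure --- and it demands a careful structural analysis of how a clique~$K$ can sit inside an interval graph: one must track which members of~$K$ are forced to stretch over which vertices outside~$K$, localize any conflict among these forced stretchings to a small vertex set, and enumerate the resulting minimal bad patterns. By comparison, the model-construction of step (i) and the configuration-by-configuration verification in the forward direction are routine, if lengthy.
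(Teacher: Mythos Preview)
Your overall architecture matches the paper's: factor through an intermediate combinatorial condition and use PQ-tree analysis for the hard direction. The paper's intermediate condition is~\eqref{eq:star} (existence of a clique path of~$G^{K}$ in which, for every pair $(v,u)\in\mathcal{P}$, one has $\lk(u)=\lk(v)$ or $\rk(u)=\rk(v)$), and the equivalence $\eqref{eq:star}\Leftrightarrow\eqref{eq:natural}$ is Lemma~\ref{thm:star}, proved by a bottom-up induction on the PQ-tree (Lemmas~\ref{lem:p-node-induction} and~\ref{lem:q-node-induction}).

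However, you have misplaced where $C_{4}$-freeness enters, and this is a genuine gap. You assert that $C_{4}$-freeness is needed in your step~(ii) --- localizing an infeasibility to a configuration of Figure~\ref{fig:forbidden-configurations-C_4-free} --- and that step~(i), the model construction $(\star)\Rightarrow\eqref{eq:sharp}$, is ``routine.'' The paper shows the opposite. Lemma~\ref{thm:star} holds for \emph{arbitrary}~$G$ with no $C_{4}$-freeness hypothesis; the PQ-tree argument extracting a configuration from failure of~\eqref{eq:star} never appeals to~$G$ being $C_{4}$-free. Where $C_{4}$-freeness is indispensable is precisely your step~(i), the passage from the clique-path condition to an interval model satisfying~\eqref{eq:sharp} (Lemma~\ref{lem:star-sharp-equivalence-in-C_4-free}). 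Without it this step fails outright: Figure~\ref{fig:sharp-versus-star} exhibits a graph~$G$ containing a~$C_{4}$ for which~$G^{K}$ satisfies~\eqref{eq:star} but not~\eqref{eq:sharp}. The mechanism is that, given a clique path satisfying~\eqref{eq:star}, one must linearly order the endpoints at each clique boundary so as to realize all the double-extension constraints simultaneously, and $C_{4}$-freeness is exactly what makes the sets $L(v)$ in the proof of Lemma~\ref{lem:star-sharp-equivalence-in-C_4-free} form a chain under inclusion, so that such an order exists. Hence your step~(i) is not routine and requires the hypothesis you assigned to step~(ii).

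A minor point: the list in Figure~\ref{fig:forbidden-configurations-C_4-free} is not finite; configurations~\ref{fig:dag+2e-unlabeled}--\ref{fig:ddag+2e-unlabeled} are infinite families (the dashed edges stand for paths of arbitrary length), so ``keeps the list of minimal obstructions finite'' is not the right framing.
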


We have a stronger statement on chordal graphs; i.e., hole-free graphs.
Every chordal graph has a simplicial vertex (i.e., a vertex whose closed neighborhood forms a clique), and its closed neighborhood can be used as the clique~$K$.

\begin{theorem}\label{thm:main-chordal}
   A chordal graph~$G$ with no universal vertices is a circular-arc graph if and only if for every simplicial vertex~$s$, the graph~$G^{N[s]}$ is an interval graph and it does not contain any annotated copy of the configurations shown in Figure~\ref{fig:forbidden-configurations-C_4-free}.
\end{theorem}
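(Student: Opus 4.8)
The plan is to deduce Theorem~\ref{thm:main-chordal} from Theorem~\ref{thm:main-1}, the extra content being that, for a chordal graph, the existentially quantified clique can be taken to be $N[s]$ for \emph{every} simplicial vertex $s$. The backward direction is then almost immediate: a chordal graph with no universal vertex is not complete, hence has a simplicial vertex $s$, for which $N[s]$ is a clique; since a chordal graph is $C_4$-free, Theorem~\ref{thm:main-1} applies with $K=N[s]$, and the hypothesis that $G^{N[s]}$ is an interval graph avoiding all configurations in Figure~\ref{fig:forbidden-configurations-C_4-free} is precisely~\eqref{eq:natural}, whence $G$ is a circular-arc graph.

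For the forward direction, fix a chordal circular-arc graph $G$ with no universal vertex and an arbitrary simplicial vertex $s$; the goal is to show that $G^{N[s]}$ satisfies~\eqref{eq:natural}. The first step is to reduce this to a geometric statement: it suffices to exhibit a circular-arc model of $G$ and a point $p$ of the circle such that the arcs containing $p$ in their interior are exactly those of $N[s]$. Indeed, flipping the arcs through $p$ and then cutting the circle at $p$ turns such a model into an interval model which, after the normalization bookkeeping of Section~\ref{sec:McConnell-transformation}, represents $G^{N[s]}$; moreover, for $v\in N[s]$ and $u\notin N[s]$, the flipped interval of $v$ becomes the complement (laid out on the line) of the arc of $v$, so it doubly extends the interval of $u$ if and only if the two original arcs were disjoint, i.e.\ if and only if $uv\notin E(G)$. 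Thus $G^{N[s]}$ satisfies~\eqref{eq:sharp}; being therefore an interval graph and, since each configuration in Figure~\ref{fig:forbidden-configurations-C_4-free} obstructs~\eqref{eq:sharp}, containing none of them, it satisfies~\eqref{eq:natural}.

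It remains to build the model. Starting from an arbitrary circular-arc model $\{A_w : w\in V(G)\}$ of $G$, pick an interior point $p$ of $A_s$ and let $J$ be the open arc complementary to $A_s$. Every $u\notin N[s]$ satisfies $A_u\cap A_s=\emptyset$, hence $A_u\subseteq J$ and $p\notin A_u$; so the only task is to enlarge each arc $A_v$ with $v\in N(s)$ until it contains $p$ in its interior, without changing any adjacency. Simpliciality together with the absence of universal vertices makes this possible: were $A_v\cap A_s$ disconnected for some $v\in N(s)$ with $p\notin A_v$, then, $A_v$ and $A_s$ both being arcs, $A_v$ would contain all of $J$; since $N[s]$ is a clique and every arc of $V(G)\setminus N[s]$ lies in $J$, the vertex $v$ would then be adjacent to every other vertex, contrary to hypothesis. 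Hence $A_v\cap A_s$ is a single sub-arc of $A_s$ on one side of $p$, and we may stretch $A_v$ along $A_s$ until $p$ lies in its interior. Every point added this way lies in $A_s$, so any arc meeting it belongs to $N[s]$ and was already a neighbour of $v$ (as $N[s]$ is a clique and $vs\in E(G)$), and no incidence is lost; performing all these enlargements and then nudging the endpoints into general position produces the required model. The main obstacle is executing this last step rigorously: verifying that the simultaneous enlargements create and destroy no edges, that the disconnected-intersection case is genuinely ruled out by the no-universal-vertex assumption, and that the enlargements --- which can force two arcs to jointly cover the circle --- stay compatible with the notion of ``normalized'' model on which the definition of $G^{N[s]}$ rests.
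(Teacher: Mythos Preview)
Your backward direction matches the paper's. For the forward direction, however, you take a detour that the paper avoids entirely. The paper simply starts from a \emph{normalized} circular-arc model of~$G$; normalization condition~II says that $N_G[v_1]\subseteq N_G[v_2]$ forces $A(v_1)\subseteq A(v_2)$, and simpliciality of~$s$ gives $N_G[s]\subseteq N_G[v]$ for every $v\in N(s)$, so automatically $A(s)\subseteq A(v)$ for all $v\in N[s]$. Any interior point of~$A(s)$ is then covered by exactly the arcs of~$N[s]$, and Lemma~\ref{lem:construction} applies directly to yield~\eqref{eq:sharp}, hence~\eqref{eq:natural}. (The paper packages this observation as a citation to \cite[Lemma~2.2]{cao-24-split-cag}.)

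Your enlargement procedure is essentially re-deriving the relevant piece of normalization by hand, and the ``main obstacle'' you flag---making the enlarged model compatible with normalization so that the flipped model actually represents $G^{N[s]}$---is real: without normalization, the adjacencies among pairs in~$K$ after flipping need not match those of~$G^{K}$ (e.g.\ when $N_G[v_1]\cup N_G[v_2]=V(G)$ but the arcs fail to cover the circle). You would then have to normalize your enlarged model and argue that the chosen point~$p$ remains covered by exactly~$N[s]$, which is not immediate since normalization may extend arcs. Starting from a normalized model eliminates this loop; your argument can be shortened to one line once you do so.
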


The characterization of Helly circular-arc graphs, an important subclass of circular-arc graphs, is also open. 
A graph is a \emph{Helly circular-arc graph} if it admits a \emph{Helly circular-arc model}, where the arcs for every maximal clique have a shared point.
In Figure~\ref{fig:normal-and-helly}, e.g., the first model is Helly, and the second is not.
All interval models are Helly, and hence all interval graphs are Helly circular-arc graphs.
Our approach also applies to Helly circular-arc graphs~\cite{cao-24-split-cag, cao-24-cag-iii-chordal}.  Indeed, it is significantly simpler for this subclass: every maximal clique can be used as the clique~$K$.

\begin{theorem}\label{thm:main-helly}
A $C_{4}$-free graph~$G$ is a Helly circular-arc graph if and only if for every maximal clique~$K$ of~$G$, the graph~$G^{K}$ satisfies~\eqref{eq:natural}.
\end{theorem}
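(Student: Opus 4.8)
I would split Theorem~\ref{thm:main-helly} into two parts. The first is a Helly analogue of Theorem~\ref{thm:correlation}: a graph~$G$ with no universal vertices is a Helly circular-arc graph if and only if $G^{K}$ satisfies~\eqref{eq:sharp} for \emph{every} maximal clique~$K$. The second is the ``pointwise'' equivalence, established on the way to Theorem~\ref{thm:main-1}, that for a $C_{4}$-free graph and a \emph{fixed} clique~$K$, the conditions~\eqref{eq:sharp} and~\eqref{eq:natural} on~$G^{K}$ say the same thing. Granting both, Theorem~\ref{thm:main-helly} is immediate by quantifying the pointwise equivalence over all maximal cliques. I would dispose of universal vertices at the outset: a universal vertex lies in every maximal clique, it can be added to or deleted from any Helly model by using the whole circle as its arc (which preserves the Helly property), and one checks this changes neither side of the claimed equivalence; so we may assume $G$ has no universal vertices and work inside McConnell's framework.

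\textbf{The forward implication of the Helly analogue.} Start from a normalized Helly circular-arc model~$\mathcal{A}$ of~$G$ (Helly models may be taken normalized, by Section~\ref{sec:McConnell-transformation}), fix a maximal clique~$K$, and let~$p$ be a point common to all arcs of~$K$. The crucial observation is that the arcs of~$\mathcal{A}$ containing~$p$ are \emph{precisely} those of~$K$: if~$A_{w}$ with $w\notin K$ contained~$p$, then $K\cup\{w\}$ would be pairwise intersecting, hence a clique, contradicting the maximality of~$K$. Consequently, flipping all arcs through~$p$ flips exactly the arcs of~$K$ and produces an interval model~$\mathcal{I}$ of~$G^{K}$. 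Cutting the circle at~$p$ and doing a routine endpoint computation shows that, in~$\mathcal{I}$, for $v\in K$ and $u\notin K$ the interval of~$v$ doubly extends that of~$u$ if and only if the original arcs~$A_{u}$ and~$A_{v}$ were disjoint, i.e. if and only if $uv\notin E(G)$. Thus~$\mathcal{I}$ witnesses~\eqref{eq:sharp} for~$K$, and combined with the pointwise equivalence this gives~\eqref{eq:natural}.

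\textbf{The backward implication of the Helly analogue} is the substantive step. Pick a (suitably chosen) maximal clique~$K$ and an interval model~$\mathcal{I}$ of~$G^{K}$ witnessing~\eqref{eq:sharp}; flipping the intervals of~$K$ back over a fresh ``gluing point''~$p$ yields a circular-arc model~$\mathcal{A}$ of~$G$ in which every arc of~$K$ contains~$p$. It remains to show that~$\mathcal{A}$ can be made Helly, i.e. that every maximal clique~$C$ of~$G$ has a common point in~$\mathcal{A}$. For $C=K$ this point is~$p$. For a maximal clique~$C$ disjoint from~$K$ it follows for free: the arcs of~$\mathcal{A}$ avoiding~$p$ form an interval model (the restriction of~$\mathcal{I}$ to $V(G)\setminus K$), interval models are automatically Helly, and $C$ is a clique of the corresponding induced subgraph. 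The genuinely delicate case — and the main obstacle — is a maximal clique~$C$ that meets~$K$ but is not contained in it: then $C\cap K$ contributes arcs through~$p$ while $C\setminus K$ contributes intervals of~$\mathcal{I}$, and there is no a priori reason for $\bigcap_{u\in C\setminus K}I(u)$ to reach outside every $I(v)$ with $v\in C\cap K$, which is exactly what a common point requires.

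\textbf{Attacking the obstacle.} I expect to handle the straddling cliques by choosing~$\mathcal{I}$ canonically — a normalized interval model of~$G^{K}$ built from its PQ-tree — so that the interval models witnessing~\eqref{eq:sharp} are structurally pinned down, and then invoking $C_{4}$-freeness: if $\bigcap_{u\in C\setminus K}I(u)$ failed to escape some $I(v)$, $v\in C\cap K$, one would extract from that failure a small obstruction — either an induced~$C_{4}$ (impossible by hypothesis), or, applying~\eqref{eq:natural} at the maximal clique~$C$ itself, one of the configurations of Figure~\ref{fig:forbidden-configurations-C_4-free} inside~$G^{C}$. Packaging this cleanly will likely require an induction, peeling off a clique that is ``extreme'' in a circular clique-ordering of~$\mathcal{A}$ (here the \lc/\rc/$\interval$ machinery of the paper should be the right language), so that the straddling configuration is controlled by a smaller instance. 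Once the straddling cliques are dealt with, the remainder is bookkeeping layered on top of Theorem~\ref{thm:main-1}, since~\eqref{eq:natural} for a maximal clique is, by the pointwise equivalence, just a restatement of~\eqref{eq:sharp} for it.
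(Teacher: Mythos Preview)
Your necessity direction is essentially the paper's: a Helly model gives a common point for any maximal clique~$K$, maximality forces that point to be covered by precisely the arcs of~$K$, and Lemma~\ref{lem:construction} then yields~\eqref{eq:sharp} (hence~\eqref{eq:natural}) for~$G^{K}$.

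For sufficiency, however, the paper takes a completely different and much shorter route. It does not try to build a Helly model directly, nor does it prove any ``Helly analogue of Theorem~\ref{thm:correlation}.'' Instead it first applies Theorem~\ref{thm:main-1} (using any single maximal clique) to conclude that~$G$ is a circular-arc graph, and then invokes an external characterization of Joeris et~al.\ (stated as Theorem~\ref{thm:joeris}): a $C_{4}$-free circular-arc graph that is not Helly must contain an induced~$\overline{S_{k}}$ for some~$k\ge 3$. The proof finishes in one line by observing that in~$\overline{S_{k}}$, taking~$K$ to be the maximal clique on the~$k$ vertices of degree~$2k-3$ makes~$G^{K}$ contain a hole, so~$G^{K}$ is not even an interval graph and~\eqref{eq:natural} fails for that~$K$.

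Your direct approach has a real gap at exactly the point you flag: the ``straddling'' maximal cliques. You propose to extract an induced~$C_{4}$ or a configuration of Figure~\ref{fig:forbidden-configurations-C_4-free} in~$G^{C}$ from a failure of the common-point condition, but you give no concrete mechanism for doing so, and the induction on ``extreme'' cliques is speculative. Carrying this through would amount to re-proving, in a model-theoretic guise, a substantial part of the Joeris et~al.\ result; there is no reason to expect the PQ-tree normalization of~$\mathcal{I}$ alone to deliver the needed obstruction, and the hypothesis on~$G^{C}$ speaks about a different auxiliary graph than the one whose model you are analyzing. The paper sidesteps the entire issue by citing Theorem~\ref{thm:joeris}.
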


Note that the sufficiency of Theorem~\ref{thm:main-helly} does not hold when there are~$C_{4}$'s; e.g.,~$\overline{p K_{2}}$ is not a Helly circular-arc graph, though for every maximal clique~$K$ of~$G$, the graph~$G^{K}$ satisfies~\eqref{eq:natural}.

Theorem~\ref{thm:main-1} strengthens a previous result in~\cite{cao-24-split-cag}, which characterizes~$G^{K}$ when~$G$ is a split graph.\footnote{A graph is a split graph if its vertex set can be partitioned into a clique and an independent set.}
In~\cite{cao-24-split-cag}, we prove the characterization using the modular decomposition tree of~$G^{K}$.
In the present paper, we use the PQ-tree data structure~\cite{BoothLueker76}, a refinement of modular decomposition trees of interval graphs.
An interval graph of order~$n$ has at most~$n$ maximal cliques, and they can be arranged as clique paths~\cite{fulkerson-65-interval-graphs}.
Booth and Lueker~\cite{BoothLueker76} introduced a data structure PQ-tree to represent all clique paths.
We try to build a special clique path, which implies an interval model satisfying~\eqref{eq:star}, bottom-up along with the PQ-tree of~$G^{K}$.
It fails only when it identifies a configuration shown in Figure~\ref{fig:forbidden-configurations-C_4-free}.
Thus, our proof is constructive and robust.

The characterization of~$G^{K}$ when~$G$ is a split graph enabled us to identify all minimal forbidden induced subgraphs of circular-arc graphs within the class of split graphs~\cite{cao-24-split-cag}.
In companion papers~\cite{cao-24-cag-iii-chordal, cao-24-cag-iv-c4-free}, we use Theorem~\ref{thm:main-chordal} and~\ref{thm:main-1} to identify all
minimal forbidden induced subgraphs of circular-arc graphs within chordal graphs and, respectively, $C_{4}$-free graphs.
The proofs imply polynomial-time certifying recognition algorithms for~$C_{4}$-free circular-arc graphs and chordal circular-arc graphs.

Our ultimate goal is to fully characterize Condition~\eqref{eq:sharp}, 
which will help us solve Problem~\ref{prob:main_problem}.
A closely related problem is certifying recognition of circular-arc graphs.

\begin{problem}
  \label{prob:alg_problem}
  Given a graph~$G$, either determine that it is a circular-arc graph, or identify a minimal forbidden induced subgraph.
\end{problem}

A natural approach to Problem~\ref{prob:alg_problem} is outlined as follows.
\begin{enumerate}
\item[(1)] Find a ``suitable'' clique $K$ in $G$, which can be used to construct the graph $G^K$.
\item[(2)] Check whether $G^K$ is an interval graph.
  If not, find a minimal not interval subgraph $F$ in $G^K$ and based on~$F$ and~$K$ extract from $G$ a minimal forbidden induced subgraph.
\item[(3)] Check whether the interval graph $G^K$ admits an interval model which satisfies Condition~\eqref{eq:sharp}.
If $G^K$ has such a model, construct a circular-arc model of $G$ by flipping the intervals from~$K$.
Otherwise, find an interval forbidden configuration $F$ in $G^K$ violating Condition~\eqref{eq:sharp}
and then, based on $F$ and~$K$, extract from $G$ a minimal forbidden induced subgraph.
\end{enumerate}

\section{McConnell's transformation}
\label{sec:McConnell-transformation}

A graph $G$ is a \emph{circular-arc} graph if there exists a mapping $A$ from $V(G)$ 
to the set of arcs of a circle such that for every $u,v \in V(G)$ we have $uv \in E(G)$ if and only if 
$A(u) \cap A(v) \neq \emptyset$.
If the above holds, then the set $\mathcal{A} = \big{\{} A(v) \mid v \in V(G) \big{\}}$ is called a \emph{circular-arc model} of $G$.
A~circular-arc model~$\mathcal{A}$ of~$G$ is \emph{normalized} if
\begin{enumerate}[I.]
\item all the arcs in $\mathcal{A}$ have different endpoints;
\end{enumerate}
and the following hold for every pair of adjacent vertices~$v_{1}$ and~$v_{2}$:
\begin{enumerate}[I.]
  \setcounter{enumi}{1}
\item if~$N[v_{1}] \subseteq N[v_{2}]$, then~$A(v_{1})\subseteq A(v_{2})$;
\item the arcs~$A(v_{1})$ and~$A(v_{2})$ cover the circle whenever $N(v_{1})\cup N(v_{2}) = V(G)$ and~$v_{1} v_{2}$ is not an edge of a $C_{4}$ (an induced cycle on four vertices).
\end{enumerate}
Two vertices~$v_1$ and~$v_2$ are \emph{twins} in~$G$ if $N[v_1] = N[v_2]$.
It is known that, if $G$ has no universal vertices and no twins, every circular-arc model of $G$ can be turned into a normalized model by possibly extending some arcs of this model~\cite{spinrad-88-case-1, hsu-95-independent-set-cag}.
It is pedestrian to generalize the notion of normalized models to graphs with twins by replacing condition (I) with the following.
\begin{enumerate}[I$'$.]
\item Two arcs~$A(v_{1})$ and~$A(v_{2})$ share an endpoint if and only if~$v_1$ and~$v_2$ are twins.  \end{enumerate}
Note that condition~(II) forces~$A(v_{1}) = A(v_{2})$ when~$v_1$ and~$v_2$ are twins.
On the other hand, we must exclude universal vertices, which are inherently incompatible with normalized circular-arc models.

Following McConnell~\cite{mcconnell-03-recognition-cag}, for a graph $G$ with no universal vertex and a clique $K$ of $G$ (not necessarily maximal), we introduce an \emph{auxiliary graph}~$G^{K}$ with the vertex set~$V(G^K) = V(G)$ and the edge set $E(G^K)$ defined as follows.
\begin{itemize}
 \item Two vertices $u,u' \in V(G) \setminus K$ are adjacent in $G^K$ if and only if $u u' \in E(G)$.
 \item Two vertices $v,v' \in K$ are adjacent in $G^K$ if and only if $N_G[v] \cup N_G[v'] \neq V(G)$
 (there is $w \in V(G) \setminus K$ nonadjacent to $v$ and $v'$ in $G$) or $vv'$ is an edge of $C_4$ in $G$.
 \item Two vertices $v \in K$ and~$u \in V(G)\setminus K$ are adjacent in $G^K$ if and only if $vu \notin E(G)$ 
   or $vu \in E(G)$ and $N_{G}[u]\not\subseteq N_{G}[v]$ (there is $w \in V(G) \setminus K$ adjacent to $u$ and not adjacent to $v$ in $G$).
\end{itemize}
See Figure~\ref{fig:McConnell-transformation} for an illustration. 
It is worth mentioning that for two vertices $v \in K$ and~$u \in V(G)\setminus K$, the condition can be stated as~$N_{G}[u]\not\subseteq N_{G}[v]$.
In Figure~\ref{fig:McConnell-transformation}c, the edge~$v_{1} v_{2}$ exists because~$v_{1} v_{2} u_{2} u_{1}$ is a~$C_{4}$ in~$G$; the edges~$v_{1} v_{3}$ and~$v_{2} v_{3}$ exist because of the vertices~$u_{2}$ and~$u_{1}$, respectively; while the edge~$v_{2}u_{2}$ exists because of the vertex~$u_{1}$.

The following observation is immediate from the construction of~$G^{K}$.
\begin{proposition}
\label{obs:uv_not_in_G}
For every pair of vertices~$v\in K$ and~$u \in V(G) \setminus K$, if $uv \notin E(G)$ then
$N_{G^K}[u] \subseteq N_{G^K}[v]$.
\end{proposition}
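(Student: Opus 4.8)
The plan is to unwind the definition of $E(G^{K})$ case by case. Fix $v\in K$ and $u\in V(G)\setminus K$ with $uv\notin E(G)$. First I would dispatch $u$ itself: since $uv\notin E(G)$, the third bullet in the construction of $G^{K}$ gives $uv\in E(G^{K})$, so $u\in N_{G^{K}}[v]$. It then remains to take an arbitrary $w\in N_{G^{K}}(u)\setminus\{v\}$ and prove $wv\in E(G^{K})$. For later use, record that $u\notin N_{G}[v]$ (because $uv\notin E(G)$ and $u\neq v$, as $u\notin K\ni v$), and split on whether $w\in K$.

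If $w\notin K$, then $wu\in E(G^{K})$ forces $wu\in E(G)$ by the first bullet. Now apply the third bullet to the pair $w\in V(G)\setminus K$, $v\in K$: if $wv\notin E(G)$ we are immediately done, and if $wv\in E(G)$ then $u\in N_{G}[w]\setminus N_{G}[v]$ witnesses $N_{G}[u']\not\subseteq N_{G}[v]$ with $u'=w$ — wait, more precisely $N_{G}[w]\not\subseteq N_{G}[v]$ is not the clause; instead one uses that the defining condition for $wv\in E(G^{K})$ is ``$wv\notin E(G)$ or $N_{G}[w]\not\subseteq N_{G}[v]$,'' and the second disjunct holds. Either way $wv\in E(G^{K})$.

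If $w\in K$, then since $K$ is a clique and $w\neq v$ we have $vw\in E(G)$, so by the second bullet it suffices to establish one of: $N_{G}[v]\cup N_{G}[w]\neq V(G)$, or $vw$ is an edge of an induced $C_{4}$ of $G$. If $u\notin N_{G}[w]$, then $u\notin N_{G}[v]\cup N_{G}[w]$ and the first alternative holds. Otherwise $wu\in E(G)$; combined with $wu\in E(G^{K})$, the third bullet (applied to $w\in K$, $u\in V(G)\setminus K$) forces $N_{G}[u]\not\subseteq N_{G}[w]$, so pick $z\in N_{G}[u]\setminus N_{G}[w]$. One checks $z\notin\{u,v,w\}$ (each of $u,v,w$ lies in $N_{G}[w]$), that $uz\in E(G)$, and that $wz\notin E(G)$. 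If $vz\notin E(G)$, then $z\notin N_{G}[v]\cup N_{G}[w]$, giving the first alternative; and if $vz\in E(G)$, then $v\text{--}w\text{--}u\text{--}z\text{--}v$ is a $4$-cycle whose two chords $vu$ and $wz$ are both absent, hence an induced $C_{4}$ containing the edge $vw$, giving the second alternative.

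The bulk of this is bookkeeping over the three bullets, and the only step needing a genuine idea is the last sub-case: the vertex $z$ furnished by the failure of $N_{G}[u]\subseteq N_{G}[w]$ must be turned either into a common non-neighbour of $v$ and $w$ or into the missing vertex of a $C_{4}$ on $vw$ — and it is precisely the ``$C_{4}$-edge'' clause in the definition of $G^{K}$ that absorbs this. So the main (and fairly mild) obstacle is organizing the case split so that it is exhaustive and verifying that the $4$-cycle produced is genuinely induced; everything else follows by directly reading off the construction.
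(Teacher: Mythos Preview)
Your proof is correct and follows essentially the same approach as the paper's: split on whether the neighbour $w$ of $u$ in $G^{K}$ lies in $K$ or not, and in the delicate sub-case $w\in K$, $wu\in E(G)$, use the witness $z\in N_{G}[u]\setminus N_{G}[w]$ to produce either a common non-neighbour of $v,w$ or an induced $C_{4}$ on $vw$. The paper's proof is slightly terser (it omits the explicit check that $u\in N_{G^{K}}[v]$ and the verification that $z\notin\{u,v,w\}$), but the logical structure and the key case analysis are identical.
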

\begin{proof}
Let $u' \in V(G) \setminus K$ be such that $u' \in N_{G^K}(u)$.
Then, we have $u' \in N_{G^K}(v)$ as we have $u'u \in E(G)$ and $uv \notin E(G)$.

Let $v' \in K$ be such that $v' \in N_{G^K}(u)$.
We will show that $v' \in N_{G^K}(v)$.
If $uv' \notin E(G)$, then we have $vv' \in E(G^K)$ as $uv \notin E(G)$ and $uv' \notin E(G)$.
So, we assume $uv' \in E(G)$.
Since $v' \in N_{G^K}(u)$ and $uv' \in E(G)$, there is $u' \in V(G) \setminus K$ such 
that $uu' \in E(G)$ and $u'v' \notin E(G)$.
If $u'v \notin E(G)$, then $vv' \in E(G^K)$ as $u'v' \notin E(G)$ and $u'v \notin E(G)$.
Otherwise, $\{u,u',v,v'\}$ induces $C_4$ in $G$, and hence $vv' \in E(G^K)$.
\end{proof}

A graph $H$ is an \emph{interval graph} if there is a mapping $I$ from $V(H)$ to the set of closed intervals in the real line such that for every $u,v \in V(H)$ we have $I(u) \cap I(v) \neq \emptyset$ 
if and only if $uv \in E(H)$.
Such a set $\mathcal{I} = \big{\{}I(v) = [\lp(v), \rp(v)] \mid v \in V(H)\big{\}}$
is called an \emph{interval model} of~$H$.
If for each of~$n$ (not necessarily distinct) left endpoints of the $n$ intervals, we take the set of vertices whose intervals contain this point, then we end with $n$ cliques. 
We leave it to the reader to verify that they include all the maximal cliques of~$H$. 
If we list the distinct maximal cliques from left to right, sorted by the endpoints that we use to define these cliques, then we can see that for any~$v\in V(H)$, the maximal cliques of~$H$ containing $v$ appear consecutively.
We say that such a linear arrangement of maximal cliques is a \emph{clique path} of~$H$.
On the other hand, given a clique path~$\sigma = \langle K_{1}, K_{2}, \ldots, K_{\ell}\rangle$ for an interval graph~$H$ with $\ell$ maximal cliques, for each vertex~$v$ we can define an interval~$[\lk_{\sigma}(v), \rk_{\sigma}(v)]$, where $\lk_{\sigma}(v)$ and~$\rk_{\sigma}(v)$ are the indices of the first and, respectively, last maximal cliques containing $v$ 
(we drop the subscript $\sigma$ if it is clear from the context). 
One may easily see that they define an interval model for~$H$; see, e.g., Figure~\ref{fig:clique-path-interval-model}. 
Therefore, clique paths and interval models
are interchangeable, and when we illustrate clique paths, we always use the way in Figure~\ref{fig:clique-path-interval-model}b.

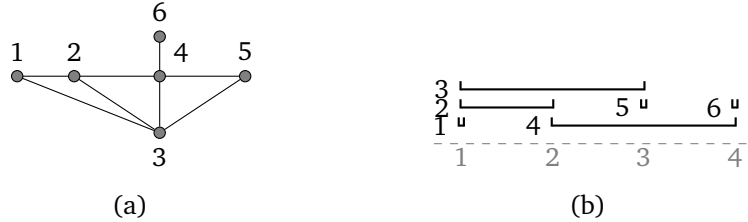
\begin{figure}[h]
  \centering\small
  \begin{subfigure}[b]{.35\linewidth}
    \centering
    \begin{tikzpicture}[scale=.75]
      \draw (1, 1) -- (5, 1);
      \node[filled vertex, "$3$" below] (v4) at (3.5, 0) {};
      \foreach[count=\i] \v/\x in {1/1, 2/2, /3.5, 5/5} {
        \draw (v4) -- (\x, 1) node[filled vertex, "$\v$"] (u\i) {};
      }
      \node[above right = 0mm of u3] {$4$};
      \draw (u3) -- ++(0, .7) node[filled vertex, "$6$"] (x2) {};
    \end{tikzpicture}
    \caption{}
  \end{subfigure}  
  \quad
  \begin{subfigure}[b]{.35\linewidth}
    \centering
    \begin{tikzpicture}[scale=1.2]
      \begin{scope}[every path/.style={{|[right]}-{|[left]}}]
        \foreach \x in {1, ..., 4} \node[gray] at (\x-1, -.15) {$\x$};
        \foreach[count=\i] \l/\r/\y/\c in {-.02/.02/1/, 0/1/2/, 0/2/3/, 1/3/1/, 1.98/2.02/2/, 2.98/3.02/2/}{
          \draw[\c, thick] (\l-.02, \y/4) node[left] {$\i$} to (\r+.02, \y/4);
        }
      \end{scope}
      \draw[dashed, gray] (-.3, 0) -- (3.2, 0);
    \end{tikzpicture}
    \caption{}
  \end{subfigure}  
  \caption{(a) An interval graph and (b) its clique path represented as an interval model.}
  \label{fig:clique-path-interval-model}
\end{figure}

\begin{lemma}\label{lem:construction}
  Let $G$ be a circular-arc graph with no universal vertices, and let~$K$ be a clique of~$G$.
  If there is a normalized circular-arc model of~$G$ in which a point is covered by and only by arcs for vertices in~$K$, then the graph~$G^{K}$ satisfies~\eqref{eq:sharp}.
\end{lemma}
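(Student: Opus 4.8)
The plan is to take the given normalized circular-arc model $\mathcal{A}$ of $G$, let $p$ be the point covered by exactly the arcs $\{A(v) : v \in K\}$, and cut the circle open at a point $p'$ \emph{antipodal} to $p$ (more precisely, at a point $p'$ not covered by any arc endpoint and chosen so that $p$ and $p'$ are "diametrically opposite" in cyclic order of the relevant endpoints). Flipping each arc $A(v)$ with $v \in K$ — i.e.\ replacing $[\lp(v),\rp(v)]$ with $[\rp(v),\lp(v)]$ — turns $\mathcal{A}$ into a family of intervals once we also cut at $p'$, because each flipped arc becomes an interval not containing $p$, and each unflipped arc (which already avoids $p$) becomes an interval not containing $p'$. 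Call the resulting interval model $\mathcal{I}$, with $I(v) = [\lp(v), \rp(v)]$ for $v \notin K$ and $I(v)$ the ``complementary'' interval for $v \in K$. The first task is to verify that $\mathcal{I}$ is an interval model \emph{of $G^K$}: one checks case by case (both endpoints in $K$, both outside $K$, one of each) that $I(u) \cap I(v) \neq \emptyset$ exactly when $uv \in E(G^K)$, using the normalization conditions (I$'$), (II), (III) to translate intersection patterns of arcs into the combinatorial definition of $E(G^K)$ given in Section~\ref{sec:McConnell-transformation}. This is essentially McConnell's construction, so I would cite it and only highlight the points where normalization is used; the two arcs for twins sharing an endpoint, and condition (III) forcing two arcs to cover the circle, are exactly what makes the flipped intervals land correctly.

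Once $\mathcal{I}$ is an interval model of $G^K$, the remaining task is to check condition~\eqref{eq:sharp} for this particular model: for every $v \in K$ and $u \in V(G) \setminus K$, we need $I(u) \strictsubset I(v)$ (double extension) if and only if $uv \notin E(G)$. For the ``if'' direction: suppose $uv \notin E(G)$. Then in $\mathcal{A}$ the arcs $A(u)$ and $A(v)$ are disjoint; since $A(v)$ contains $p$ and $A(u)$ does not contain $p$ (as $u \notin K$), the arc $A(u)$ lies entirely in the complementary arc of $A(v)$, which is $A(v)$'s reflection, i.e.\ $I(v)$ after flipping. Because all endpoints are distinct (condition I$'$, and $u$ is not a twin of $v$ since $uv\notin E(G)$ but $v$ is adjacent to everything $v$ is adjacent to...), $A(u)$ sits strictly inside, giving $\lp(v) < \lp(u) \le \rp(u) < \rp(v)$ in the cut-open line — which is precisely $I(u) \strictsubset I(v)$. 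For the ``only if'' direction: suppose $I(u) \strictsubset I(v)$. Then $I(u)$, which is the original arc $A(u)$ (not flipped), is strictly contained in $I(v)$, the complement of $A(v)$; hence $A(u)$ is disjoint from $A(v)$, so $uv \notin E(G)$. Both directions thus reduce to the geometric fact that flipping $A(v)$ sends ``disjoint from $A(v)$ and not covering $p$'' to ``strictly inside the flip of $A(v)$,'' with strictness coming from the distinct-endpoints normalization.

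The step I expect to be the main obstacle is the bookkeeping in the ``if'' direction when $uv \in E(G)$ but we still need to rule out $I(u) \strictsubset I(v)$: here $A(u)$ and $A(v)$ intersect, and I must show that after flipping $A(v)$, the interval $I(u)$ is \emph{not} doubly inside $I(v)$. If $A(u) \cap A(v) \neq \emptyset$ and neither endpoint of $A(u)$ lies in the complement of $A(v)$ appropriately, then $I(u)$ sticks out of $I(v)$ on at least one side, so double extension fails — but one has to be careful when $A(u) \subseteq A(v)$, i.e.\ $N_G[u] \subseteq N_G[v]$ forces (via normalization II) containment $A(u) \subseteq A(v)$, yet $A(u)$ still contains no point of $A(v)$'s complement, so after flipping, $I(u)$ and $I(v)$ become disjoint rather than nested — and crucially this case is exactly when $uv \in E(G^K)$ fails, so it does not threaten \eqref{eq:sharp}. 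The cleanest way to organize this is a single lemma: for $v \in K$, $u \notin K$, we have $I(u) \strictsubset I(v)$ in $\mathcal{I}$ $\iff$ $A(u)$ is strictly contained in the open complementary arc of $A(v)$ $\iff$ $A(u) \cap A(v) = \emptyset$ $\iff$ $uv \notin E(G)$, where the first equivalence is the definition of flipping plus distinct endpoints, the second is that $A(u)$ avoids $p$ while $A(v)$ is the unique-ish arc through $p$ among the pair, and the third is the circular-arc model property of $\mathcal{A}$. I would present the whole proof as: (i) define $p'$ and the flipped model $\mathcal{I}$; (ii) cite/verify $\mathcal{I}$ represents $G^K$; (iii) prove the displayed chain of equivalences; (iv) conclude \eqref{eq:sharp}.
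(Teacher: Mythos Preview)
Your proposal is correct and follows essentially the same approach as the paper: flip the arcs through $p$, verify case-by-case that the resulting intervals form an interval model of $G^{K}$, and then check for $v\in K$, $u\notin K$ the chain $I(u)\strictsubset I(v)\iff A(u)\cap A(v)=\emptyset\iff uv\notin E(G)$, with strictness coming from the distinct-endpoint normalization. One cosmetic simplification: after flipping, \emph{no} arc contains $p$, so you can cut at $p$ itself and dispense with the auxiliary point~$p'$.
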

\begin{proof}
  Let $\mathcal{A}$ be a normalized circular-arc model of~$G$ in which a point $P$ is covered by and only by arcs for vertices in~$K$. 
  Note that
a pair of twins of~$G$ is both in either~$K$ or~$V(G) \setminus K$.
For each vertex~$x\in V(G)$, denote by~$\lp(x)$ and~$\rp(x)$, respectively, the counterclockwise and clockwise endpoints of the arc~$A(x)$.
  We produce an interval model of $G^K$ by setting
  \[
    I(x) =
    \begin{cases}
      [\rp(x), \lp(x)] & \text{if } x\in K,
      \\
      [\lp(x), \rp(x)] & \text{if } x\in V(G)\setminus K.
    \end{cases}
  \]
  That is, we flip the arcs containing the point $P$. 
  See Figure~\ref{fig:McConnell-transformation} for an illustration.
  
  We show that (a)~a pair of vertices~$x$ and~$y$ are adjacent in~$G^{K}$ if and only if $I(x)\cap I(y)\ne \emptyset$ (i.e., $\{I(x) \mid x \in V(G)\}$ is an interval model for~$G^{K}$); and (b)~for all~$x\in K$ and~$y\in V(G)\setminus K$,
  \[
    I(y)\strictsubset I(x) \Leftrightarrow x y\not\in E(G).
  \]
  
  First, if both $x$ and~$y$ are in~$V(G)\setminus K$, then $I(x) \cap I(y) = A(x)\cap A(y)$, which is empty if and only if $x y\not\in E(G)$ and~$x y\not\in E(G^{K})$.

  Second, suppose that $x, y\in K$.  
  Recall that $x y\in E(G^{K})$ if and only if $N_{G}[x]\cup N_{G}[y] \ne V(G)$ or
  $xy$ is an edge of $C_4$ in $G$.
  If $x y\notin E(G^{K})$, then $N_{G}[x]\cup N_{G}[y] = V(G)$ and $uv$ is not an edge of~$C_{4}$ in $G$, 
  and hence $A(x)$ and~$A(y)$ cover the circle as~$\mathcal{A}$ is normalized. 
  Hence $I(x)$ and~$I(y)$ are disjoint. 
  Suppose that $x y\in E(G^{K})$.
  If there exists a vertex~$w\in V(G)\setminus K$ that is adjacent to neither of $x$ nor $y$ in~$G$, then~$I(w) = A(w) \subseteq I(x)\cap I(y)$.
  If~$xy$ is an edge of a~$C_{4}$ in~$G$, let the other two vertices be~$w$ and~$w'$.
  Note that $w, w'\not\in K$.
  Thus,
  \[
    \emptyset \ne I(w)\cap I(w') = A(w)\cap A(w') \subseteq I(x)\cap I(y).
  \]
  
  Finally, suppose that $x\in K$ and~$y\in V(G)\setminus K$.
  If $x y\not\in E(G)$, then $I(y)\strictsubset I(x)$ and~$x y\in E(G^{K})$.
  In the rest, $x y\in E(G)$.
  Since $A(x)\cap A(y)\ne \emptyset$, $I(y)\not\strictsubset I(x)$.
  Recall that
   $x y\in E(G^{K})$ if and only if there exists a vertex~$w\in N_{G}(y)\setminus N_{G}(x)$.
  If such a vertex~$w$ exists, then $I(y) \cap I(w)\subseteq I(w) \strictsubset I(x)$, and hence~$I(x) \cap I(y)\ne \emptyset$.
  Otherwise, $N_{G}[y]\subsetneq N_{G}[x]$.
  Since the model~$\mathcal{A}$ is normalized, $A(y)\strictsubset A(x)$, which means that $I(x)\cap I(y)=\emptyset$.
\end{proof}

We are now ready to prove Theorem~\ref{thm:correlation}.
\begin{proof}[Proof of Theorem~\ref{thm:correlation}]
Suppose $G$ is a circular-arc graph with no universal vertices.
Let $\mathcal{A}$ be a normalized model of $G$ and let $P$ be any point of the circle. 
Let $K = \{v \in V(G) \mid P \in A(v)\}$. 

Let $\mathcal{I} = \big{\{}I(v) \mid v \in V(G)\big{\}}$ be a set of intervals obtained by flipping every arc from $\mathcal{A}$ containing the point $P$.
By Lemma~\ref{lem:construction}, $\mathcal{I}$ is an interval model of $G^K$ satisfying Condition~\eqref{eq:sharp}.

In the rest, we show sufficiency.  
Suppose that $\mathcal{I} = \big{\{} [\lp(x), \rp(x) ] \mid x\in V(G) \big{\}}$ is an interval model of~$G^{K}$ satisfying Condition~\eqref{eq:sharp}.
We may assume that all the endpoints in~$\mathcal{I}$ are positive, and hence no interval contains the point~$0$.
We claim that the following arcs on a circle of length~$\ell + 1$, where $\ell$ denotes the maximum of the $2n$ endpoints in~$\mathcal{I}$, gives a circular-arc model of~$G$:
  \[
    A(x) =
    \begin{cases}
      [\rp(x), \lp(x)] & \text{if } x\in K,
      \\
      [\lp(x), \rp(x)] & \text{if } x\in V(G)\setminus K.
    \end{cases}
  \]
All the arcs for vertices in~$K$ intersect because they cover the point~$0$.
On the other hand, note that $G^{K} - K$ is isomorphic to~$G - K$, while $I(x) = A(x)$ for all $x\in V(G)\setminus K$.
For a pair of vertices~$x\in K$ and~$y\in V(G)\setminus K$,
by assumption, $x y\not\in E(G)$ if and only if $I(y)\strictsubset I(x)$, which is equivalent to~$A(x)\cap A(y) = \emptyset$.
\end{proof}

We now consider clique paths of~$G^K$ when it satisfies~\eqref{eq:sharp}.
In a clique path of $G^K$, by definition, for any two vertices $x, y \in V(G)$, we have 
\[
  N_{G^K}[x] \subseteq N_{G^K}[y] \quad \text{if and only if} \quad
  \lk(y) \leq \lk(x) \leq \rk(x) \leq \rk(y).
\]
Consider a pair of vertices $v \in K$ and $u \in V(G) \setminus K$.
If $uv \not\in E(G)$, then $N_{G^K}[u] \subseteq N_{G^K}[v]$ by Proposition~\ref{obs:uv_not_in_G}, 
and hence $\lk(v) \leq \lk(u) \leq \rk(u) \leq \rk(v)$ in any clique path of $G^K$.
If $uv \in E(G)$ and $N_{G^K}[u] \not \subseteq N_{G^K}[v]$, then $\lk(u) > \lk(v)$ or $\rk(v) < \rk(u)$ in any clique path of $G^K$.
Most importantly, if $uv \in E(G)$ and $N_{G^K}[u] \subseteq N_{G^K}[v]$, then the clique path of any interval model~$\mathcal{I}$ of $G^K$ admitting Condition~\eqref{eq:sharp} must satisfy $\lk(u)=\lk(v)$ or $\rk(v) =\rk(u)$ as $I(u) \not \strictsubset I(v)$.

Let
\[
  \mathcal{P} = \big{\{}(v,u) \in K \times V(G) \setminus K  \mid N_{G^K}[u] \subseteq N_{G^K}[v] \text{ and } vu \in E(G)\big{\}}.
\]
Summing up the previous paragraph, if an interval model of $G^K$ admits Condition~\eqref{eq:sharp}, then its corresponding clique path satisfies Condition~\eqref{eq:star}:
\begin{equation}
\label{eq:star}
\begin{array}{c}
\text{For every pair $(v, u) \in \mathcal{P}$, it holds~$\lk(u) = \lk(v)$ or $\rk(v) = \rk(u)$}.
\end{array}
\tag{$\vDash$}
\end{equation}
We say the graph~$G^{K}$ satisfies Condition~\eqref{eq:star} if it is an interval graph and it admits a clique path satisfying Condition~\eqref{eq:star}.

\begin{observation}
\label{obs:sharp-implies-star}
Let $G$ be a graph with no universal vertices and let $K$ be a clique of $G$.
If~$G^K$ satisfies Condition~\eqref{eq:sharp} then $G^K$~satisfies Condition~\eqref{eq:star}. 
\end{observation}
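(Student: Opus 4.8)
The plan is to start from a model that witnesses~\eqref{eq:sharp}, pass to the clique path it induces, and verify~\eqref{eq:star} one pair of~$\mathcal{P}$ at a time. By definition of~\eqref{eq:sharp}, $G^{K}$ is an interval graph and admits an interval model $\mathcal{I} = \{[\lp(x),\rp(x)] \mid x \in V(G)\}$ in which, for every $v\in K$ and $u\in V(G)\setminus K$, we have $[\lp(u),\rp(u)]\strictsubset[\lp(v),\rp(v)]$ if and only if $uv\notin E(G)$. Let $\sigma$ be the clique path of $G^{K}$ obtained from $\mathcal{I}$ as described earlier in this section, and write $\lk=\lk_{\sigma}$ and $\rk=\rk_{\sigma}$. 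It suffices to show that $\sigma$ satisfies~\eqref{eq:star}, so I would fix an arbitrary pair $(v,u)\in\mathcal{P}$ and argue that $\lk(u)=\lk(v)$ or $\rk(v)=\rk(u)$.

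By the definition of~$\mathcal{P}$ we have $N_{G^{K}}[u]\subseteq N_{G^{K}}[v]$ and $vu\in E(G)$. The containment, via the correspondence between neighborhood containment and clique paths recalled just before the statement, yields $\lk(v)\le\lk(u)\le\rk(u)\le\rk(v)$. Suppose toward a contradiction that $\lk(u)\neq\lk(v)$ and $\rk(v)\neq\rk(u)$; then $\lk(v)<\lk(u)$ and $\rk(u)<\rk(v)$. I would translate this strict nesting in~$\sigma$ back into~$\mathcal{I}$: the maximal clique of index $\lk(v)$ contains~$v$ but not~$u$, so in~$\mathcal{I}$ it is realized by the set of intervals through some point~$p$ with $p\in[\lp(v),\rp(v)]$ and $p\notin[\lp(u),\rp(u)]$; since every maximal clique containing~$u$ has index strictly larger than $\lk(v)$ and the clique path lists maximal cliques in left-to-right order of their defining points, the interval $[\lp(u),\rp(u)]$ lies entirely to the right of~$p$, forcing $\lp(v)\le p<\lp(u)$. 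A symmetric argument on the right gives $\rp(u)<\rp(v)$. Hence $[\lp(u),\rp(u)]\strictsubset[\lp(v),\rp(v)]$, and then~\eqref{eq:sharp} forces $uv\notin E(G)$, contradicting $vu\in E(G)$. Therefore $\lk(u)=\lk(v)$ or $\rk(v)=\rk(u)$, and since $(v,u)\in\mathcal{P}$ was arbitrary, $\sigma$ satisfies~\eqref{eq:star}, so $G^{K}$ satisfies~\eqref{eq:star}.

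The only step with any content is the middle translation: a strict nesting $\lk(v)<\lk(u)\le\rk(u)<\rk(v)$ of the abstract intervals induced by the clique path need not, a priori, coincide with the relation $\strictsubset$ in the concrete model~$\mathcal{I}$, because $\sigma$ is merely one clique path of $G^{K}$ and its left-to-right order of maximal cliques is coarser than the order of endpoints in~$\mathcal{I}$. The resolution is exactly the bookkeeping indicated above—each maximal clique of $\sigma$ is realized in $\mathcal{I}$ by the intervals through one of the chosen left endpoints, these endpoints respect the left-to-right order, and $[\lp(u),\rp(u)]$ must meet the endpoint realizing $K_{\lk(u)}$—so I expect this to be the main (and essentially the only) obstacle. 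Alternatively, one may simply invoke the discussion immediately preceding the statement, where precisely this translation is already carried out for a model admitting~\eqref{eq:sharp}.
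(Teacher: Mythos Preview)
Your proof is correct and follows essentially the same approach as the paper, which treats the observation as an immediate consequence of the discussion in the paragraphs preceding it (take the clique path induced by a model witnessing~\eqref{eq:sharp} and note that $I(u)\not\strictsubset I(v)$ forces $\lk(u)=\lk(v)$ or $\rk(u)=\rk(v)$). You have simply made explicit the translation step that the paper leaves implicit, and your final sentence already identifies this correspondence.
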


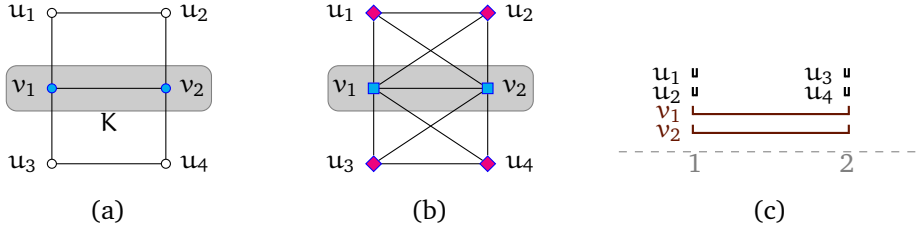
\begin{figure}[h]
  \centering \small
  \begin{subfigure}[b]{0.25\linewidth}
    \centering
    \begin{tikzpicture}[xscale=1.5]
    \draw[region] (.6, 0.7) rectangle (2.4, 1.3);
    \draw (1, 0) grid (2, 2);
    \foreach \i/\p in {1/left, 2/right} {
\node[gf-vertex, "$v_{\i}$" \p](v\i) at (\i, 1) {};
      \foreach \j in {0, 1} {
        \pgfmathsetmacro{\x}{int(\numexpr \i+\j*2)}\node[gnf-vertex, "$u_{\x}$" \p] (u\x) at (\i, 2 - \j*2) {};
      }
    }
    \node at (1.5, .55) {$K$};
    \end{tikzpicture}
    \caption{}\label{fig:domino}
  \end{subfigure}
  \,
  \begin{subfigure}[b]{0.25\linewidth}
    \centering
    \begin{tikzpicture}[xscale=1.5]
    \draw[region] (.6, 0.7) rectangle (2.4, 1.3);
\foreach \y in {0, 1, 2} 
    \draw (1, \y) -- (2, \y);
    \foreach[count=\i] \xd in {1, -1} {
      \foreach \yd in {1, -1} 
      \draw (\i, 1) -- ++(\xd, \yd);
      \draw[witnessed edge] (\i, 0) -- (\i, 2);
    }
  
    \foreach[count=\i] \xd in {-1, 1} {
        \draw (\i, 1) node[a-vertex] (v\i) {} ++ ({\xd/5}, 0) node {$v_{\i}$};
        \foreach[count=\j] \yd in {-1, 1} {
          \draw (\i, 1 + \yd) node[b-vertex] {} ++ ({\xd/5}, 0) node {$u_{\inteval{\i+\j*2}}$};
      }
    }
    \end{tikzpicture}
    \caption{}
  \end{subfigure}
  \begin{subfigure}[b]{0.3\linewidth}
    \centering
    \begin{tikzpicture}[xscale=2]
      \begin{scope}[every path/.style={{|[right]}-{|[left]}}]
        \foreach \x in {1, ..., 2} \node[gray] at (\x-1, -.15) {$\x$};
        \foreach \i in {1, 2}
          \draw[Sepia, thick] (-.02, {(3 -\i)/4}) node[left] {$v_\i$} to (1.02, {(3 -\i)/4});
          \foreach \i in {1, 2} {
            \foreach \j in {0, 1} 
            \draw[thick] (\j-.02, {(5-\i)/4}) node[left] {$u_{\the\numexpr \i+\j*2\relax}$} to (\j+.02, {(5-\i)/4});
        }
      \end{scope}
      \draw[dashed, gray] (-.5, 0) -- (1.5, 0);
    \end{tikzpicture}
    \caption{}
    \label{fig:sharp-versus-star-aligned-model}
  \end{subfigure}
  \caption{Illustration for $G^K$ satisfying~\eqref{eq:star} but not satisfying~\eqref{eq:sharp}.
   (a) The graph $G$, where $K = \{v_{1}, v_{2}\}$; (b) the interval graph $G^K$; and (c)~ an interval model~of~$G^K$.
 }
\label{fig:sharp-versus-star}
\end{figure}

However, Condition~\eqref{eq:star} turns out to be weaker than Condition~\eqref{eq:sharp}.
Figure~\ref{fig:sharp-versus-star} shows an example of
$G^K$ satisfying Condition~\eqref{eq:star}, witnessed by the clique path shown in Figure~\ref{fig:sharp-versus-star-aligned-model}, but not Condition~\eqref{eq:sharp}.
Indeed, suppose we want to transform the model in Figure~\ref{fig:sharp-versus-star-aligned-model} to obtain an interval
model satisfying~\eqref{eq:sharp}.
Since $u_1v_2 \notin E(G)$ and $u_2v_1 \notin E(G)$, 
we must have $\lp(v_1) < \lp(u_2)$ and $\lp(v_2) < \lp(u_1)$, 
and since $u_1v_1 \in E(G)$ and $u_2v_2 \in E(G)$, we must have $\lp(u_1) \leq \lp(v_1)$ and 
$\lp(u_2) \leq \lp(v_2)$.
One can easily check that we can not arrange the left endpoints of the intervals for $u_1,v_1,u_2,v_2$ to fulfil 
these four constraints.
This is not surprising as $G$ is not a circular-arc graph.

The difference between Condition~\eqref{eq:star} and Condition~\eqref{eq:sharp} lies in~$C_{4}$'s; e.g., note that~$u_1 v_1 v_2 u_2$ is a~$C_4$ in Figure~\ref{fig:domino}.
They are equivalent when~$G$ is~$C_4$-free.

\begin{lemma}
  \label{lem:star-sharp-equivalence-in-C_4-free}
  Let $G$ be a $C_4$-free graph with no universal vertices and let $K$ be a clique in $G$. 
  Then~$G^K$ satisfies~\eqref{eq:sharp} if and only if $G^K$ satisfies~\eqref{eq:star}.
\end{lemma}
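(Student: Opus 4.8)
The forward direction is exactly Observation~\ref{obs:sharp-implies-star}, so the task is to deduce~\eqref{eq:sharp} from~\eqref{eq:star} under the hypothesis that $G$ is $C_{4}$-free. The plan is to fix a clique path $\sigma$ of $G^{K}$ witnessing~\eqref{eq:star}, take the interval model $x \mapsto [\lk(x), \rk(x)]$ it induces, and \emph{perturb} it slightly. Precisely, I would choose numbers $\phi_{L}(x), \phi_{R}(x) \in (0, \tfrac12)$ — pairwise distinct within each ``block'' in the sense described below — and set $I(x) = [\,\lk(x) - \phi_{L}(x),\ \rk(x) + \phi_{R}(x)\,]$. Since two clique-intervals of $\sigma$ overlap exactly when the corresponding vertices are adjacent in $G^{K}$, and shrinking each left end and extending each right end by less than $\tfrac12$ neither creates nor destroys any overlap, $\{I(x)\}$ is again an interval model of $G^{K}$, with all $2n$ endpoints distinct. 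So the whole difficulty is to realize~\eqref{eq:sharp}.

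Toward this I would first catalog the pairs $v \in K$, $u \in V(G)\setminus K$. If $uv \notin E(G)$, then $N_{G^{K}}[u] \subseteq N_{G^{K}}[v]$ by Proposition~\ref{obs:uv_not_in_G}, so the clique-interval of $u$ is nested in that of $v$, and~\eqref{eq:sharp} demands the strict nesting $I(u) \strictsubset I(v)$, i.e.\ $\lp(v) < \lp(u)$ and $\rp(u) < \rp(v)$. If $uv \in E(G)$ but $(v,u) \notin \mathcal{P}$, then the clique-interval of $u$ is \emph{not} nested in that of $v$, which forces $I(u) \not\strictsubset I(v)$ no matter how we perturb, so there is nothing to do. If $(v,u) \in \mathcal{P}$, then the clique-interval of $u$ is nested in that of $v$ and, by~\eqref{eq:star}, they share a left end ($\lk(u) = \lk(v)$) or a right end ($\rk(u) = \rk(v)$); since~\eqref{eq:sharp} now wants $I(u) \not\strictsubset I(v)$, which holds automatically unless such a shared end is the \emph{only} obstruction, it suffices to guarantee $\lp(u) < \lp(v)$ when they share a left end, or $\rp(v) < \rp(u)$ when they share a right end. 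Collecting these, for each index $i$ the numbers $\phi_{L}$ restricted to $\{x : \lk(x) = i\}$ must realize the order: $\lp(v)$ before $\lp(u)$ for every non-edge pair, and $\lp(u)$ before $\lp(v)$ for every $\mathcal{P}$-pair; and symmetrically $\phi_{R}$ on $\{x : \rk(x) = j\}$. (For a $\mathcal{P}$-pair sharing both ends I would simply impose the left constraint.) All other pairs of coinciding endpoints may be ordered freely.

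The heart of the proof is to show that these constraints admit a common linear order, i.e.\ that the digraph $D_{i}$ on $\{x : \lk(x) = i\}$ with arc $v \to u$ per non-edge pair and $u \to v$ per $\mathcal{P}$-pair (and the analogous digraph $D'_{j}$ on $\{x : \rk(x) = j\}$) is acyclic. Here is where $C_{4}$-freeness enters. The vertex set of $D_{i}$ lies in one maximal clique of $\sigma$, hence is a clique of $G^{K}$; consequently its members outside $K$ are pairwise adjacent in $G$ (non-$K$ vertices are adjacent in $G^{K}$ iff in $G$), while its members in $K$ are pairwise adjacent because $K$ is a clique. Suppose $D_{i}$ has a directed cycle and take a shortest one. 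Its arcs all run between $K$ and $V(G)\setminus K$, so it alternates, $a_{1} \to b_{1} \to a_{2} \to \cdots \to a_{k} \to b_{k} \to a_{1}$ with $a_{p} \in K$ and $b_{p} \notin K$, and, being shortest, it has no chords. From the arcs, $a_{p} b_{p} \notin E(G)$ and $a_{p+1} b_{p} \in E(G)$ for every $p$; and if $k \ge 3$ the cycle has length $\ge 6$, so an arc $a_{1} \to b_{2}$ would be a chord shortening it, whence no such arc exists, whence $a_{1} b_{2} \in E(G)$ — for $k = 2$ this edge is the arc $b_{2} \to a_{1}$. Then the four distinct vertices $a_{1}, a_{2}, b_{1}, b_{2}$ carry the edges $a_{1}a_{2}$, $b_{1}b_{2}$, $a_{2}b_{1}$, $a_{1}b_{2}$ and the non-edges $a_{1}b_{1}$, $a_{2}b_{2}$, so they induce a $C_{4}$ in $G$ — a contradiction. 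The digraph $D'_{j}$ is handled the same way after indexing its shortest cycle to start at a vertex of $V(G)\setminus K$. With $D_{i}$ and $D'_{j}$ acyclic, I pick linear extensions and realize them by $\phi_{L}, \phi_{R} \in (0, \tfrac12)$.

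It then remains to re-examine each pair $v \in K$, $u \in V(G)\setminus K$ against the model just built and verify~\eqref{eq:sharp}; this is routine, as each strict or non-strict inequality either already holds at the level of $\sigma$ (the relevant endpoints come from different indices, whose gap dwarfs the perturbation) or was explicitly arranged by $D_{i}$ or $D'_{j}$. Combined with Observation~\ref{obs:sharp-implies-star}, this proves the equivalence. The single delicate point is the acyclicity argument: one has to use that a \emph{shortest} directed cycle has no chords — both to forbid the chord $a_{1} \to b_{2}$ and to know the four extracted vertices are distinct (the latter being immediate since $K$ and $V(G)\setminus K$ are disjoint) — and that the block underlying $D_{i}$ really is a clique of $G^{K}$, so that the two non-$K$ vertices among the four are genuinely adjacent in $G$.
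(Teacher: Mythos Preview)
Your proof is correct and follows essentially the same approach as the paper: perturb a clique path satisfying~\eqref{eq:star} within each block $K_i\setminus K_{i-1}$ (and symmetrically on the right) to obtain a model satisfying~\eqref{eq:sharp}, using $C_4$-freeness to show the endpoint constraints are consistent. The paper states consistency as a chain condition on the sets $L(v)=\{u\in (K_i\setminus K_{i-1})\setminus K : uv\in E(G)\}$ for $v\in (K_i\setminus K_{i-1})\cap K$, deriving the $C_4$ directly from an incomparable pair $u_1\in L(v_1)\setminus L(v_2)$, $u_2\in L(v_2)\setminus L(v_1)$; your shortest-cycle argument in the constraint digraph $D_i$ recovers exactly the same four vertices and the same $C_4$.
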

\begin{proof}
  We have seen the necessity in Observation~\ref{obs:sharp-implies-star}, and the proof is focused on sufficiency.
  Suppose that~$\langle K_1,\ldots K_t \rangle$ is a clique path of~$G^K$ that satisfies~\eqref{eq:star}.
  We show that we can transform it into a model $\mathcal{I}$ satisfying Condition~\eqref{eq:sharp}.

For notational convenience, we introduce empty sets~$K_{0}$ and~$K_{t+1}$ as sentinels.
Then, for every~$i$ with~$1\le i \le t$, both the sets $K_i \setminus K_{i-1}$ and $K_i \setminus K_{i+1}$ are nonempty.
For every $i = 1, \ldots, t$ and every vertex $v \in (K_{i} \setminus K_{i-1}) \cap K$, we define the set
\[
  L(v) = \big{\{} u \in N_{G}(v) \setminus K \mid u \in (K_{i} \setminus K_{i-1}) \big{\}}.
  \]
We claim that 
\begin{equation}
\label{eq:C_4_free_cond_left}
\text{for every $v_{1}, v_{2} \in (K_i \setminus K_{i-1}) \cap K$,
one of~$L(v_{1})$ and~$L(v_{2})$ is a subset of the other.
}
\end{equation}
Suppose for contradiction that there are~$u_{1} \in L(v_{1})\setminus L(v_{2})$ and~$u_{2}\in L(v_{2})\setminus L(v_{1})$.
Then~$u_{1} v_{1}, u_{2} v_{2} \in E(G)$ and~$u_{1} v_{2}, u_{2} v_{1} \notin E(G)$.
Note that~$v_{1} v_{2} \in E(G)$ since $v_{1}, v_{2} \in K$, and~$u_{1} u_{2} \in E(G)$ since $u_{1}, u_{2} \in K_i \setminus K$.
Thus, the set $u_{1} v_{1} v_{2} u_{2}$ is a~$C_{4}$ in $G$, which cannot be the case.

Accordingly, we may order the left endpoints of the intervals from $K_i \setminus K_{i-1}$ such that:
For every pair of~$v \in (K_{i} \setminus K_{i-1}) \cap K$ and~$u \in (K_{i} \setminus K_{i-1}) \setminus K$, we set $\lp(u) < \lp(v)$ if $u v \in E(G)$ (i.e., $u\in L(v)$), or $\lp(v) > \lp(u)$ otherwise.
By \eqref{eq:C_4_free_cond_left}, this is a partial order.
We extend it into a total order, and put them in this order in the range~$(i - {1\over 3}, i)$.

We deal with the right endpoints in a symmetric way.
Suppose that $1\le i \le t$.  For every pair of~$v \in (K_{i} \setminus K_{i+1}) \cap K$ and~$u \in (K_{i} \setminus K_{i+1}) \setminus K$, we set $\rp(u) > \rp(v)$ if $u v \in E(G)$, or $\rp(v) < \rp(u)$ otherwise.
We extend this partial order into a total order, and put them in this order in the range~$(i, i + {1\over 3})$.

Note that this model is an interval model for~$G^{K}$: the intervals for two vertices intersect if and only if there is~$i$ such that they are both in~$K_{i}$.
Finally, we consider every pair~$v \in K$ and~$u \in V(G) \setminus K$ such that $N_{G^K}(u) \subseteq N_{G^K}(v)$.
Since $G^K$ satisfies~\eqref{eq:star}, if~$u v\in E(G)$, then~$\lk(v) = \lk(u)$ or~$\rk(u) = \rk(v)$.
We may consider~$\lk(v) = \lk(u)$ and the other is symmetric.
By the ordering, $\lp(u) < \lp(v)$, and hence~$I(u)\not\strictsubset I(v)$.
On the other hand, if~$u v\in E(G)$, then~$\lk(v) \le \lk(u)$ and~$\rk(u) \le \rk(v)$.
By the ordering, we always have~$\lp(v) < \lp(u) < \rp(u) < \rp(v)$.
Thus, the interval model of $G^K$ we have constructed satisfies Condition~\eqref{eq:sharp}.
\end{proof}

We are able to fully characterize Condition~\eqref{eq:star}.
The proof is deferred to the next section.

\begin{lemma}
\label{thm:star} 
Let $G$ be a graph with no universal vertices and let~$K$ be a clique of~$G$.
The graph~$G^K$ satisfies Condition~\eqref{eq:star} if and only if it satisfies Condition~\eqref{eq:natural}.
\end{lemma}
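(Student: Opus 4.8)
\textbf{Proof plan for Lemma~\ref{thm:star}.}
The statement asserts that for an arbitrary graph~$G$ with no universal vertices and an arbitrary clique~$K$, the auxiliary graph~$G^K$ satisfies Condition~\eqref{eq:star} if and only if it satisfies Condition~\eqref{eq:natural}; that is, $G^K$ is an interval graph admitting a clique path in which every pair~$(v,u)\in\mathcal{P}$ is \emph{left-aligned} ($\lk(u)=\lk(v)$) or \emph{right-aligned} ($\rk(v)=\rk(u)$) precisely when $G^K$ is an interval graph avoiding every configuration of Figure~\ref{fig:forbidden-configurations-C_4-free}. Note there is no $C_4$-free hypothesis here; the $C_4$-freeness enters only in Lemma~\ref{lem:star-sharp-equivalence-in-C_4-free}, which bridges \eqref{eq:star} and \eqref{eq:sharp}. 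The direction ``\eqref{eq:star}~$\Rightarrow$~\eqref{eq:natural}'' is the easy one and should be handled first: assuming $G^K$ is interval and has a clique path~$\sigma$ satisfying \eqref{eq:star}, I would argue contrapositively that if $G^K$ contained some configuration~$F$ from the figure, then the vertices of the embedded copy of~$F$, read against~$\sigma$, would force a pair~$(v,u)\in\mathcal{P}$ that is neither left- nor right-aligned. For each of the configurations this is a short local contradiction: the ``square'' vertices sit in~$K$, the ``rhombus'' vertices outside~$K$, and the annotated edges encode exactly the $\mathcal{P}$-membership and the adjacency pattern in~$G^K$; one then checks that the clique-path constraints these vertices impose (which cliques each interval must span, given the induced-subgraph structure of~$F$ in~$G^K$) are incompatible with aligning the relevant pair on either side. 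This is essentially a finite case check, one configuration at a time, using only the definition of clique path and the membership conditions of~$\mathcal{P}$.

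The substantive direction is ``\eqref{eq:natural}~$\Rightarrow$~\eqref{eq:star}'': given that $G^K$ is an interval graph with \emph{no} forbidden configuration, produce a clique path satisfying \eqref{eq:star}. This is where the PQ-tree machinery announced in the introduction is used, and where essentially all the work lies; I expect this to occupy the bulk of Section~\ref{sec:McConnell-transformation}'s sequel (the ``next section'' to which the proof is deferred). The plan is to process the PQ-tree~$\pqtree$ of $G^K$ bottom-up, maintaining at each node a partial clique path for the subgraph induced by the leaves below it, together with bookkeeping data recording, for every pair~$(v,u)\in\mathcal{P}$ both of whose members have already appeared, whether it has been left-aligned, right-aligned, or (temporarily) unconstrained because one of $v,u$ can still be pushed to an end of the current fragment. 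At a P-node one must choose an ordering of the child fragments; at a Q-node the order is fixed up to reversal. The invariant to carry up is something like: the partial clique path can be reversed and/or its two extreme cliques can absorb certain still-flexible vertices, so that when a parent later concatenates sibling fragments the alignment demands of newly-completed $\mathcal{P}$-pairs can still be met. The construction should be designed so that the \emph{only} way it can get stuck is by exhibiting, among the leaves processed so far, an induced copy of one of the configurations in Figure~\ref{fig:forbidden-configurations-C_4-free} with the correct annotations — which, combined with the structure of~$G^K$ (in particular Proposition~\ref{obs:uv_not_in_G}, which pins down the neighborhood nesting forced by non-edges of~$G$ between~$K$ and its complement), is what makes the list in the figure exactly the obstruction set.

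Concretely, the key steps I would carry out, in order, are: (1) restate the $\mathcal{P}$-alignment requirement purely in PQ-tree terms — for $(v,u)\in\mathcal{P}$ we have $N_{G^K}[u]\subseteq N_{G^K}[v]$, so in any clique path $[\lk(u),\rk(u)]\subseteq[\lk(v),\rk(v)]$ already holds, and \eqref{eq:star} is the additional demand that the containment be non-strict on at least one side; (2) classify, for each vertex of~$K$ and each vertex outside~$K$, the PQ-tree node where its ``interval'' is born and the flexibility it retains (which endpoints can still slide), distinguishing in particular vertices that are simplicial-like in $G^K$ from the rest; (3) give the bottom-up rule at leaves, P-nodes, and Q-nodes that preserves the invariant, checking at each node type that a failure to preserve it yields a specific small induced configuration; (4) enumerate which configuration arises from which failure mode, matching them against Figure~\ref{fig:forbidden-configurations-C_4-free} and verifying that the ``uncertain'' annotations (round vertices, solid-plus-dotted edges) are exactly the slots where the argument does not need to know the status. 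The hardest part, which I would flag explicitly, is step~(3) at Q-nodes combined with step~(4)'s completeness claim: showing that the handful of configurations drawn is genuinely \emph{exhaustive} — that every way the bottom-up procedure can fail corresponds to one of them — rather than merely sound. This is the delicate combinatorial heart of the paper; the soundness of each individual configuration (the easy direction above) is routine by comparison, and the reduction of \eqref{eq:natural} to a statement purely about interval graphs and their PQ-trees is largely bookkeeping once Proposition~\ref{obs:uv_not_in_G} is in hand.
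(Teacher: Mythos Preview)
Your proposal is correct and follows essentially the same approach as the paper: the easy direction is a per-configuration local check, and the hard direction is a bottom-up PQ-tree construction where the invariant carried up is precisely your ``flexible endpoints'' bookkeeping (formalized in the paper as conditions~\eqref{eq:main_cond_inner_vertices}--\eqref{eq:main_cond_extreme_vertices} via the set~$\extreme(A)$), with separate P-node and Q-node lemmas whose failure modes are matched against Figure~\ref{fig:forbidden-configurations-C_4-free}. The one ingredient you did not name but will need for the Q-node case is Gimbel's end-vertex theorem (Theorem~\ref{thm:end-interval}), which is what turns ``$u$ cannot be placed in an end clique'' into a concrete small induced subgraph.
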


The main theorems of the paper are easy consequences of Lemmas~\ref{thm:star} and~\ref{lem:star-sharp-equivalence-in-C_4-free}.

\begin{proof}[Proof of Theorem~\ref{thm:main-1}]
Let $G$ be a $C_4$-free graph.
Suppose $G$ is a circular-arc graph. 
By Theorem~\ref{thm:correlation}, there exists a clique $K$ of $G$ such that $G^K$ is an interval graph
satisfying Condition~\eqref{eq:sharp}.
By Observation~\ref{obs:sharp-implies-star}, $G^K$ satisfies Condition~\eqref{eq:star}.
By Lemma~\ref{thm:star},
$G^K$ contains no configuration in Figure~\ref{fig:forbidden-configurations-C_4-free}.

For the proof of the other direction, let $K$ be a clique in $G$ such that~$G^K$ satisfies~\eqref{eq:natural}.
Hence, Lemma~\ref{thm:star} asserts that $G^K$ satisfies Condition~\eqref{eq:star}.
Since $G$ is $C_4$-free, Lemma~\ref{lem:star-sharp-equivalence-in-C_4-free} asserts that $G^K$ satisfies Condition~\eqref{eq:sharp}.
By Theorem~\ref{thm:correlation}, we conclude that $G$ is a circular-arc graph.
\end{proof}

\begin{proof}[Proof of Theorem~\ref{thm:main-chordal}]
Let $G$ be a chordal graph and let $s$ be a simplicial vertex in $G$.
If $G^{N[s]}$ is an interval graph containing no configuration shown in Figure~\ref{fig:forbidden-configurations-C_4-free}, 
$G$ is a circular-arc graph by Theorem~\ref{thm:main-1}.
On the other way, since in every normalized model of $G$ the arc of $s$ is covered by and only by the arcs representing
the vertices from $N_G[s]$ \cite[Lemma 2.2]{cao-24-split-cag}, we conclude that~$G^{N[s]}$ is an interval graph containing no configuration in Figure~\ref{fig:forbidden-configurations-C_4-free}.
\end{proof}

For~$k \ge 3$, the \emph{$k$-sun}, denoted as~$S_{k}$, is the graph obtained from the cycle of length~${2 k}$ by adding all edges among the even-numbered vertices to make them a clique.  For example, $S_{3}$ is depicted in Figures~\ref{fig:normal-and-helly}a.
Joeris et al.~\cite{joeris-11-hcag} proved that a~$C_{4}$-free circular-arc graph is a Helly circular-arc graph if and only if it does not contain an induced copy of the complement of any $k$-sun, $k \ge 3$.
Note that in~$\overline{S_{k}}$, the degree of a vertex is either~$k - 2$ or~$2 k - 3$.

\begin{theorem}[\cite{joeris-11-hcag}]
  \label{thm:joeris}
  Let~$G$ be a minimal circular-arc graph that is not a Helly circular-arc graph.  If~$G$ is~$C_{4}$-free, then it is~$\overline{S_{k}}$.
\end{theorem}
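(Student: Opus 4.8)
The plan is to derive the theorem from the characterization of \cite{joeris-11-hcag} recalled just above it---a $C_4$-free circular-arc graph is a Helly circular-arc graph if and only if it contains no induced copy of $\overline{S_j}$ for any $j\ge 3$---together with a short minimality argument and two structural facts about $\overline{S_k}$. Let $G$ be a minimal circular-arc graph that is not a Helly circular-arc graph, and assume $G$ is $C_4$-free. Then $G$ is a $C_4$-free circular-arc graph that is not Helly circular-arc, so by the characterization it contains an induced subgraph $H\cong\overline{S_k}$ for some $k\ge 3$. I will argue that $H$ is itself a circular-arc graph that is not a Helly circular-arc graph. Since circular-arc graphs are hereditary and $G$ is minimal, every proper induced subgraph of $G$ is a Helly circular-arc graph; hence $H$ cannot be proper, giving $H=G$, i.e.\ $G=\overline{S_k}$.

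First I record the structure of $\overline{S_k}$ and check it is $C_4$-free. Writing the underlying $2k$-cycle of $S_k$ as $q_1,p_1,q_2,p_2,\ldots,q_k,p_k$ with the $q_i$ the odd vertices and the $p_i$ the even ones (so that $Q:=\{q_1,\ldots,q_k\}$ is the clique added in $S_k$), one checks from the definition that in $\overline{S_k}$ the set $Q$ is a clique, $I:=\{p_1,\ldots,p_k\}$ is independent, and $p_i$ is adjacent to every vertex of $Q$ except $q_i$ and $q_{i+1}$ (indices mod $k$); in particular there is no universal vertex, and degrees are $k-2$ or $2k-3$ as stated in the text. Suppose $\overline{S_k}$ had an induced $C_4$ on a set $S$ of four vertices. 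Since $C_4$ is triangle-free it contains at most two vertices of the clique $Q$, and since a vertex of $I$ is nonadjacent to all other vertices of $I$, having three vertices of $I$ in $S$ would leave one of them with degree at most $1$ in the $C_4$; so $S$ contains at most two vertices of $I$. As $Q$ and $I$ partition $V(\overline{S_k})$, the set $S$ has exactly two vertices $q,q'\in Q$ and two vertices $p,p'\in I$. But in an induced $C_4$ the two nonadjacent pairs of vertices partition the four vertices; since $p\not\sim p'$, the pair $\{p,p'\}$ is one of them, forcing $\{q,q'\}$ to be the other, so $q\not\sim q'$---contradicting $q,q'\in Q$. Hence $\overline{S_k}$ is $C_4$-free.

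Next I exhibit a circular-arc model of $\overline{S_k}$. Fix distinct anchor points $y_1,\ldots,y_k$ in this cyclic order on the circle; let $A(p_i)$ be a tiny arc around $y_i$, and let $A(q_j)$ be the arc running from just after $y_j$ the long way around to just before $y_{j-1}$, so that $A(q_j)$ contains exactly the anchors $y_i$ with $i\notin\{j-1,j\}$ and its complement is a short arc (of length roughly $2\pi/k$) through $y_{j-1}$ and $y_j$. Then the $A(p_i)$ are pairwise disjoint ($I$ independent); $A(q_j)$ meets $A(p_i)$ precisely when $i\notin\{j-1,j\}$, matching adjacency in $\overline{S_k}$; and for $j\ne j'$ the arcs $A(q_j)$ and $A(q_{j'})$ meet, because their complements are two arcs whose total length is roughly $4\pi/k<2\pi$ (using $k\ge 3$) and so cannot cover the circle. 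This is a circular-arc model of $\overline{S_k}$. (Alternatively, that $\overline{S_k}$ is a circular-arc graph can be verified through Theorem~\ref{thm:main-1} by taking $K=Q$ and checking that $(\overline{S_k})^{Q}$ is an interval graph free of the configurations of Figure~\ref{fig:forbidden-configurations-C_4-free}.)

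With these in hand the proof closes quickly: $\overline{S_k}$ is a $C_4$-free circular-arc graph that contains an induced copy of $\overline{S_k}$ (namely itself, with $k\ge 3$), so by the characterization of \cite{joeris-11-hcag} it is not a Helly circular-arc graph; thus $H\cong\overline{S_k}$ is an induced subgraph of $G$ that is a circular-arc graph but not a Helly circular-arc graph, and minimality of $G$ yields $G=H=\overline{S_k}$. The only steps that require any work are the two structural facts, and both are short: $C_4$-freeness is immediate from the two-part structure of $\overline{S_k}$, and the circular-arc model above (or the appeal to Theorem~\ref{thm:main-1}) settles the rest, so the remainder is bookkeeping. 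If one prefers to avoid invoking the Joeris characterization for the non-Helly direction, Theorem~\ref{thm:main-helly} offers an alternative: it suffices to pick a single maximal clique $K$ of $\overline{S_k}$---for instance $Q$ itself, or $\{p_i\}\cup(Q\setminus\{q_i,q_{i+1}\})$---and locate a configuration of Figure~\ref{fig:forbidden-configurations-C_4-free} in $(\overline{S_k})^{K}$, which would be the one genuinely computational point in that variant.
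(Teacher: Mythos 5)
The paper does not prove this statement at all: it is imported verbatim from \cite{joeris-11-hcag}, stated immediately after quoting that paper's iff-characterization (a $C_4$-free circular-arc graph is Helly if and only if it contains no induced $\overline{S_j}$, $j\ge 3$). So there is no in-paper argument to compare against; what you have written is the derivation the authors implicitly rely on, and it is correct. Your two structural verifications are exactly the content needed to pass from the iff-characterization to the ``minimal'' formulation: that $\overline{S_k}$ is $C_4$-free (so the characterization may be applied to $\overline{S_k}$ itself, certifying it non-Helly) and that $\overline{S_k}$ is a circular-arc graph (so minimality of $G$ applies to the induced copy and forces $G=\overline{S_k}$). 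Both checks are sound, as is the model you build. One small slip: the parenthetical ``so that $Q:=\{q_1,\dots,q_k\}$ is the clique added in $S_k$'' contradicts your own indexing --- in the cycle $q_1,p_1,\dots,q_k,p_k$ the even-numbered vertices are the $p_i$, so the clique added in $S_k$ is on $I$, not $Q$. This is only a mislabel: the description of $\overline{S_k}$ you actually use afterwards ($Q$ a clique, $I$ independent, each $p_i$ nonadjacent to exactly two cyclically consecutive $q$'s, degrees $k-2$ and $2k-3$) is the correct one, so nothing downstream is affected. Your closing alternatives via Theorems~\ref{thm:main-1} and~\ref{thm:main-helly} are dispensable and should be used with care --- the sufficiency direction of Theorem~\ref{thm:main-helly} is itself proved using Theorem~\ref{thm:joeris} --- but you only invoke the necessity direction, which rests on Lemma~\ref{lem:construction} and is not circular.
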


\begin{proof}[Proof of Theorem~\ref{thm:main-helly}]
  The necessity follows from Lemma~\ref{lem:construction}.  In any Helly circular-arc model of~$G$, the arcs for vertices in~$K$ share a point, and this point cannot be in any other arc because~$K$ is maximal.
  
  We now consider the sufficiency.
  By Theorem~\ref{thm:main-1},~$G$ is a circular-arc graph.
  If it is not a Helly circular-arc graph, then it contains an induced~$\overline{S_{k}}$ by Theorem~\ref{thm:joeris}.  However,~$G^{K}$ contains a hole if we take~$K$ to be the vertices of degree~$2 k - 3$.
\end{proof}

\section{Proof of Lemma~\ref{thm:star}}

Since the necessity of Lemma~\ref{thm:star} is straightforward, we will be focused on the sufficiency; i.e., if $G^K$ contains no configuration in Figure~\ref{fig:forbidden-configurations-C_4-free}, then $G^K$ satisfies Condition~\eqref{eq:star}.

Booth and Lueker~\cite{BoothLueker76} introduced a data structure PQ-tree to represent all clique paths of the interval graph~$H$.
A~\emph{PQ-tree} $\pqtree$ of the interval graph $H$ is a rooted and labeled tree in which
\begin{itemize}
\item the maximal cliques of $H$ are in the correspondence with the leaf nodes of $\pqtree$;
\item each non-leaf node is labeled either ``P'' or ``Q'';
\item a~P-node has at least two children, and its non-leaf children must be Q-nodes; and
\item a~Q-node has at least three children, and they are ordered.
\end{itemize}
See Figure~\ref{fig:PQ-tree-nor-int-mod} for an illustration.
The main observation of Booth and Lueker~\cite{BoothLueker76} is the following.
\begin{theorem}[\cite{BoothLueker76}]
\label{thm:pq-trees}
Let~$H$ be an interval graph and~$T$ a PQ-tree of~$H$.
An ordering of the maximal cliques of~$H$ is a clique path of~$H$ if and only if it is the linear order of the leaves of a tree obtained from~$\pqtree$ by permuting the children of P-nodes and reversing the children of some Q-nodes.
\end{theorem}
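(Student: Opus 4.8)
The plan is to follow Booth and Lueker's incremental construction, after recasting the statement in the language of the consecutive-ones property. Fix an arbitrary ordering $v_{1}, \ldots, v_{n}$ of $V(H)$, let $U$ be the set of all maximal cliques of $H$, and for each $i$ let $S_{i} \subseteq U$ be the set of maximal cliques containing $v_{i}$. The clique paths of $H$ are precisely the orderings of $U$ in which every $S_{i}$ occupies a consecutive block, by the definition of clique path recalled above; write $\Pi(S_{1}, \ldots, S_{n})$ for this set, which is nonempty because $H$ is an interval graph~\cite{fulkerson-65-interval-graphs}. For a PQ-tree $\pqtree$ with leaf set $U$, let $\operatorname{CONS}(\pqtree)$ be the set of leaf orders obtainable from $\pqtree$ by permuting the children of P-nodes and reversing the children of some Q-nodes. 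With this notation, ``the PQ-tree of $H$'' means a PQ-tree $\pqtree$ with $\operatorname{CONS}(\pqtree) = \Pi(S_{1}, \ldots, S_{n})$, and the theorem is exactly the claim that such a $\pqtree$ exists; so the task reduces to constructing it and verifying the identity.

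I would build $\pqtree$ by inserting the constraints $S_{1}, \ldots, S_{n}$ one at a time. Start with $\pqtree_{0}$, a single P-node whose children are all the leaves of $U$, so that $\operatorname{CONS}(\pqtree_{0})$ is the set of all orderings of $U$. Maintaining the invariant $\operatorname{CONS}(\pqtree_{i}) = \Pi(S_{1}, \ldots, S_{i})$, at step $i$ I apply the REDUCE operation for $S_{i}$: mark the leaves in $S_{i}$ \emph{full}; propagate the labels \emph{full}, \emph{empty}, \emph{partial} bottom-up (the BUBBLE phase); locate the \emph{pertinent root}, the topmost node whose descendant leaves are neither all full nor all empty; and rewrite the subtrees along the pertinent subtree using a fixed finite list of templates---about three for P-nodes and several for Q-nodes, chosen according to whether a node is the pertinent root and how its full, empty, and partial children are laid out---so that afterwards some node has its children arrangeable to make exactly the full leaves consecutive, while no consecutivity already forced by $S_{1}, \ldots, S_{i-1}$ is lost. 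The crux is the template case analysis: one must show (i) that whenever $\Pi(S_{1}, \ldots, S_{i}) \neq \emptyset$ some template applies at every node on the pertinent path, and (ii) that each template \emph{exactly} updates the invariant---adding only the new requirement ``$S_{i}$ consecutive'', discarding no ordering that still satisfies all of $S_{1}, \ldots, S_{i}$, and admitting no ordering that violates $S_{i}$. This verification is the main obstacle: the Q-node cases split into many subcases, especially when partial children occur at one or both ends, and the delicate part is keeping the invariant tight in both directions; indeed part (i) amounts to saying that PQ-trees are expressive enough to realize every attainable consecutive-ones structure.

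Two easy endpoints finish the argument. If at some step no template applies at the pertinent root, then $\Pi(S_{1}, \ldots, S_{i}) = \emptyset$ and $H$ has no clique path, contradicting that $H$ is an interval graph; so the construction runs to completion and produces a genuine PQ-tree $\pqtree = \pqtree_{n}$ with $\operatorname{CONS}(\pqtree) = \Pi(S_{1}, \ldots, S_{n})$. The ``only if'' direction of the theorem is then immediate: a clique path is a consecutive-ones ordering, hence lies in $\Pi(S_{1}, \ldots, S_{n}) = \operatorname{CONS}(\pqtree)$, i.e.\ is the leaf order of a tree obtained from $\pqtree$ by the permitted permutations and reversals. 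The ``if'' direction is the inclusion $\operatorname{CONS}(\pqtree) \subseteq \Pi(S_{1}, \ldots, S_{n})$, which is the final invariant itself: every leaf order reachable from $\pqtree$ keeps each $S_{i}$ consecutive and is therefore a clique path of $H$. (Uniqueness of $\pqtree$ up to the permitted operations is not needed for the statement as phrased, but follows as well, since $\operatorname{CONS}(\pqtree)$ together with the invariant determines the P/Q-labelled shape of $\pqtree$.)
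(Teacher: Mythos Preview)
The paper does not prove this theorem; it is quoted verbatim from Booth and Lueker~\cite{BoothLueker76} and used as a black box. So there is no ``paper's own proof'' to compare against.

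Your proposal is a faithful high-level outline of the original Booth--Lueker argument: recast clique paths as consecutive-ones orderings, build the PQ-tree incrementally via REDUCE, and maintain the invariant $\operatorname{CONS}(\pqtree_i)=\Pi(S_1,\ldots,S_i)$. As a plan this is correct and standard. Two remarks. First, you explicitly defer the template case analysis (``the main obstacle''), so what you have written is a proof sketch, not a proof; the substantive work---checking that each template preserves the invariant exactly, in both directions---is named but not carried out. Second, the paper's definition of a PQ-tree is slightly specialized: it requires that every non-leaf child of a P-node be a Q-node. The raw output of the Booth--Lueker procedure need not have this alternating shape, so to match the statement as phrased here you would also need the (easy) normalization step that collapses adjacent P-nodes and observes that, for the clique-ordering instance of an interval graph, the resulting tree has the required alternating P/Q structure.
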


\begin{figure}[h]
\begin{subfigure}[t]{0.6\linewidth}
    \begin{tikzpicture}[scale=1.5]\small
      \draw[region, fill=Cyan!30] (.6, 0.6) rectangle (2.2, 1.4);
      \draw[region, fill=Cyan!30] (2.4, 0.6) rectangle (6.2, 2.);
      \draw[region, fill=violet!30] (2.5, .9) rectangle (5.2, 1.8);
      \begin{scope}[every path/.style={{|[right]}-{|[left]}}]
        \foreach \x in {1, ..., 6} \node[gray] at (\x, -.15) {$K_\x$};
        \foreach[count=\i] \l/\r/\y in {1/6/2, 1/6/1, 1/2/4, .98/1.02/6/1, .98/1.02/5/1, 1.98/2.02/5, 3/6/4, 3/5/5, 3/4/6, 2.98/3.02/7, 4.98/5.02/6, 4/5/8, 4/5/7, 5.98/6.02/7, 5.98/6.02/6, 5.98/6.02/5}{
          \draw[thick] (\l-.02, \y/5) node[left] {$\i$} to (\r+.02, \y/5);
        }
      \end{scope}
      \draw[dashed, gray] (.8, 0) -- (6.2, 0);
    \end{tikzpicture}

\caption{}\label{fig:normalizedModelx}
\end{subfigure}
\,
\begin{subfigure}[t]{0.3\linewidth}
  \centering
\begin{tikzpicture}[
  every node/.style={circle, draw, minimum size=15pt, inner sep=.5pt},
  P/.style={circle, draw,fill=cyan!30},
  Q/.style={circle, draw,fill=violet!30},
  level 1/.style={level distance=5ex, sibling distance=7em},
  level 2/.style={sibling distance=4em},
  level 3/.style={sibling distance=2.5em}
  ]
  \small
      \node[P]{P}
      [edge from parent]child{node[P] {P}
      child {node{$K_{1}$}}
      child {node{$K_{2}$}}
    }
    child{node[P] {P}
      child {node[Q]{Q}
        child {node{$K_{3}$}}
        child {node{$K_{4}$}}
        child {node{$K_{5}$}}
      }
      child {node {$K_{6}$}
    }
    }
    ;
\end{tikzpicture}
\caption{}\end{subfigure}
\caption{(a) An interval graph given as a clique path, and (b) a PQ-tree $\pqtree$.}
\label{fig:PQ-tree-nor-int-mod} 
\end{figure}
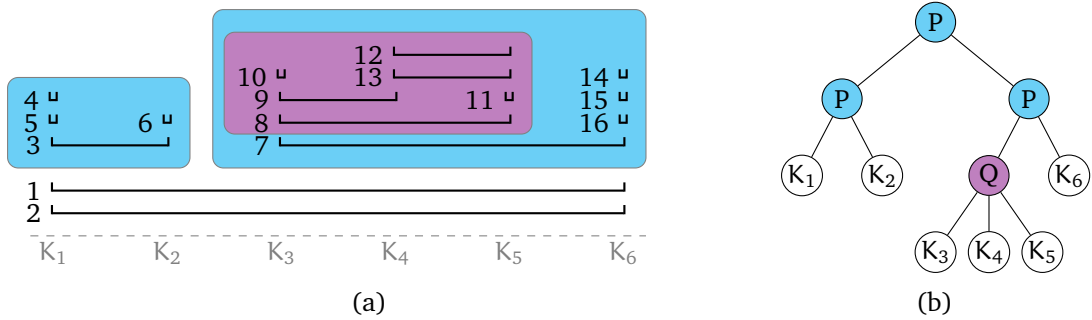

For every vertex~$v$ of~$H$, we denote by $\cliques(v)$ the set of all maximal cliques of $H$ containing the vertex $v$.
For every node $A$ in $\pqtree$ we denote by $\cliques(A)$ the set of all leaf cliques in the subtree rooted at~$A$. 
We abuse the notation by using~$H[A]$ to denote the graph induced by the set 
$V(A) = \bigcup \cliques(A)$ of vertices that appear in the descendants of~$A$.
Let
\[
  \inner(A) = \{v \in V(H)\mid \cliques(v) \subseteq \cliques(A)\}, \quad \encomp(A) = V(A)\setminus \inner(A).
\]
Note that a vertex~$v$ is in~$\encomp(A)$ if and only if~$\cliques(A) \subsetneq \cliques(v)$, and~$v$ is universal in $H[A]$ if and only if~$\cliques(A) \subseteq \cliques(v)$.\footnote{A reader familiar with modular decomposition may have noticed that~$\inner(A)$ is a strong module of~$H$. Indeed, $\inner(A)$ is a prime module if~$A$ is a Q-node; otherwise, it is a series module when it has universal vertices, or a parallel node otherwise}
For example, in the interval graph~$H$ from Figure~\ref{fig:PQ-tree-nor-int-mod},
we have $\inner(Q) = \{8,9,10,11,12,13\}$ and $\encomp(Q) = \{1,2,7\}$. 
The following are straightforward from the definition of PQ-trees.
We present proofs in the appendix for the sake of completeness.

\begin{proposition}\label{prop:PQ-tree-nodes}
  Let~$H$ be an interval graph and~$T$ a PQ-tree of~$H$.
  Let~$A$ be a non-leaf node in~$T$ and let~$\univ(A)$ be the set of universal vertices in~$H[A]$.

  \begin{enumerate}[1)]
  \item \label{item:PQ-tree-ex-univ} $\encomp(A)\subseteq \univ(A)$.
  \item \label{item:PQ-tree-connectivity} The subgraph~$H[A] - \univ(A)$ is connected if and only if~$A$ is a Q-node.
  \item   If~$A$ is a P-node, then for each child~$B$ of~$A$, the set $\inner(B)$ is a component in $H[A] - \univ(A)$.
  \end{enumerate}
\end{proposition}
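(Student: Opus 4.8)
The plan is to derive all three items from Theorem~\ref{thm:pq-trees}, together with the fact that in any clique path the maximal cliques containing a fixed vertex~$v$ occupy a contiguous block~$\cliques(v)$, as do the cliques~$\cliques(B)$ in the subtree of any node~$B$. One elementary move is used repeatedly: \emph{reversing the frontier below a node~$B$} is realized by allowed operations (induct on the height of~$B$: permute the children of~$B$ into reversed order if~$B$ is a P-node or reverse them if~$B$ is a Q-node, and recursively reverse each child's subtree), and it yields a clique path that coincides with the original one outside the block~$\cliques(B)$ and reverses it inside. It also helps to record the dictionary implied by Theorem~\ref{thm:pq-trees}: $v$ is universal in~$H[A]$ exactly when~$\cliques(A)\subseteq\cliques(v)$, while~$v\in\encomp(A)$ exactly when~$\cliques(A)\subsetneq\cliques(v)$; hence~$V(A)\setminus\univ(A)=\{v:\emptyset\ne\cliques(v)\subsetneq\cliques(A)\}$, and~$H[A]-\univ(A)$ is the subgraph induced by these vertices, with~$u$ and~$v$ adjacent iff~$\cliques(u)\cap\cliques(v)\ne\emptyset$; moreover~$\cliques(B)\subsetneq\cliques(A)$ for every child~$B$ of~$A$, so~$\inner(B)\subseteq V(A)\setminus\univ(A)$.

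For item~\ref{item:PQ-tree-ex-univ}, I would suppose~$v\in\encomp(A)$ while some~$K'\in\cliques(A)\setminus\cliques(v)$ exists. In any clique path the block~$\cliques(v)$ meets~$\cliques(A)$ (as~$v\in V(A)$), is not contained in it (as~$\cliques(v)\not\subseteq\cliques(A)$), and does not contain it (because of~$K'$), so the two blocks overlap properly; after a global reversal if necessary we may assume that, with~$\cliques(A)$ occupying positions~$l,l+1,\dots,r$, the block~$\cliques(v)$ occupies positions~$l',\dots,r'$ with~$l'<l\le r'<r$. Reversing the frontier below~$A$ leaves the positions outside~$[l,r]$ untouched, so~$v$ still lies in the cliques at positions~$l',\dots,l-1$, whereas inside~$[l,r]$ the cliques containing~$v$, formerly at positions~$l,\dots,r'$, now sit at positions~$l+r-r',\dots,r$; since~$r'<r$, the nonempty stretch~$l,\dots,l+r-r'-1$ consists of cliques avoiding~$v$ but lies strictly between cliques containing~$v$, contradicting the contiguity of~$\cliques(v)$. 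Thus~$\encomp(A)\subseteq\univ(A)$.

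I would prove the remaining two items together by induction on the height of~$A$, starting with a P-node~$A$ having children~$B_1,\dots,B_k$ ($k\ge2$). The pivotal claim is that if~$\cliques(v)$ meets~$\cliques(B_i)$ and~$\cliques(B_j)$ for some~$i\ne j$, then~$v\in\univ(A)$: indeed, if~$\cliques(v)$ missed some block~$\cliques(B_l)$, reordering~$A$'s children to place~$B_l$ between~$B_i$ and~$B_j$ would break the contiguity of~$\cliques(v)$; so~$\cliques(v)$ meets every block~$\cliques(B_1),\dots,\cliques(B_k)$, and then placing each~$B_j$ in turn into the interior of the children order (or, when~$k=2$, comparing the two orders of~$B_1,B_2$) forces~$\cliques(B_j)\subseteq\cliques(v)$ for every~$j$, i.e.\ $\cliques(A)\subseteq\cliques(v)$. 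Consequently every~$v\in V(A)\setminus\univ(A)$ has~$\cliques(v)\subseteq\cliques(B_j)$ for a unique~$j$, so~$V(A)\setminus\univ(A)=\bigsqcup_{j}\inner(B_j)$ and distinct~$\inner(B_i),\inner(B_j)$ span no edge (their vertices share no clique). Each~$\inner(B_j)$ is nonempty (for a leaf child this would force all its vertices into~$\univ(A)$, hence that clique into a strictly larger one of~$\cliques(A)$, contradicting maximality; for a non-leaf child it would make~$H[B_j]$ complete, impossible) and connected (a leaf child yields a clique; a Q-node child~$B_j$ has~$H[B_j]-\univ(B_j)$ connected by the induction hypothesis, and the vertices of~$\inner(B_j)\cap\univ(B_j)$, adjacent to all of~$V(B_j)$, attach to it). Hence the sets~$\inner(B_j)$ are exactly the connected components of~$H[A]-\univ(A)$, which proves the third item and, since~$k\ge2$, item~\ref{item:PQ-tree-connectivity} for P-nodes.

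The heart of the argument, and the step I expect to be the main obstacle, is item~\ref{item:PQ-tree-connectivity} for a Q-node~$A$ with children~$B_1,\dots,B_k$ ($k\ge3$); suppose for contradiction that~$H[A]-\univ(A)$ is disconnected. Take the frontier of~$\pqtree$ as a reference clique path and let~$\langle K_1,\dots,K_m\rangle$ be its restriction to~$\cliques(A)$, in which the blocks~$\cliques(B_1),\dots,\cliques(B_k)$ appear in this order. For each component~$C$ of~$H[A]-\univ(A)$ the union~$\bigcup_{u\in C}\cliques(u)$ is a contiguous block~$S_C$ (a connected union of intervals is an interval), distinct~$S_C$'s are disjoint (a shared clique would place vertices of two components in one clique), and every~$K_p$ contains a non-universal vertex (else~$K_p$ would be contained in another clique of~$\cliques(A)$); so the~$S_C$'s partition~$\langle K_1,\dots,K_m\rangle$ into contiguous blocks~$S_1,\dots,S_s$, in this order, with~$s\ge2$. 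Any permutation of~$S_1,\dots,S_s$ within the~$A$-interval, the rest of the reference path kept fixed, is again a valid clique path of~$H$: for a vertex~$v$ this needs checking only when~$\cliques(v)$ meets~$\cliques(A)$, and then~$\cliques(v)$ is contained in a single~$S_C$ (if~$v$ is non-universal in~$H[A]$), equals the whole~$A$-interval (if~$v\in\univ(A)$), or contains it together with some flanking cliques (if~$v\in\encomp(A)$), remaining contiguous in each case. Now move~$S_1$ to the end of the~$A$-interval, keeping internal orders, obtaining a valid clique path~$P_1$. If the right end of~$S_1$ coincides with the boundary between~$\cliques(B_t)$ and~$\cliques(B_{t+1})$ for some~$1\le t\le k-1$, then in~$P_1$ the children of~$A$ appear in the cyclically shifted order~$B_{t+1},\dots,B_k,B_1,\dots,B_t$, which for~$k\ge3$ is neither the identity nor the reversal, contradicting Theorem~\ref{thm:pq-trees}; otherwise the right end of~$S_1$ lies strictly inside some~$\cliques(B_t)$, and then in~$P_1$ that block has part of itself at the front and part at the back of the~$A$-interval, separated by a nonempty stretch (since~$\cliques(B_t)\subsetneq\cliques(A)$), contradicting the contiguity of~$\cliques(B_t)$. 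Either way we reach a contradiction, so~$H[A]-\univ(A)$ is connected, completing item~\ref{item:PQ-tree-connectivity}. The only delicate points, which I would write out in full, are the ``prefix-and-suffix, hence everything'' reasoning in the P-node claim and the cyclic-shift/splitting dichotomy for Q-nodes; everything else is routine bookkeeping with contiguity.
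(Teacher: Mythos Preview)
Your proof is correct and follows the same overall strategy as the paper---manufacture a clique path that violates Theorem~\ref{thm:pq-trees}---but your execution is considerably more elaborate at two points.

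For item~\ref{item:PQ-tree-ex-univ} you already have the one-line proof in your ``dictionary'' paragraph: $v\in\encomp(A)$ means $\cliques(A)\subsetneq\cliques(v)$, which is exactly the condition for~$v$ to be universal in~$H[A]$. The subsequent overlap-and-reverse argument is valid but redundant. The paper simply observes that any $u\in\inner(A)$ has $\cliques(u)\subseteq\cliques(A)\subsetneq\cliques(v)$, hence $uv\in E(H)$.

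For the Q-node direction of item~\ref{item:PQ-tree-connectivity}, the paper avoids your cyclic-shift/block-split dichotomy. It first observes (essentially your reversal trick applied to a single child~$B_i$) that any vertex outside $\bigcup_i\inner(B_i)$ which meets $\cliques(B_i)$ must contain it entirely, so each such vertex has a well-defined interval $\interval(v)\subseteq[1,\ell]$ of child indices. Disconnectedness of $H[A]-\univ(A)$ then yields an index~$k$ with every such $\interval(v)$ contained in $[1,k]$ or in $[k+1,\ell]$, and the single reordering $\langle B_k,\dots,B_1,B_\ell,\dots,B_{k+1}\rangle$ gives a clique path contradicting Theorem~\ref{thm:pq-trees}. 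This is shorter because the split is guaranteed to fall on a child boundary, so your second case never arises. Your component-block argument has the advantage of being entirely at the level of cliques (no coarsening to child indices) and of making the ``every clique contains a non-universal vertex'' step explicit, which the paper leaves implicit.
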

\begin{proposition}\label{prop:V-sets}
  Let~$H$ be an interval graph and~$T$ a PQ-tree of~$H$.
  Let~$A$ be a Q-node in~$T$ and let $\langle B_1,\ldots,B_\ell \rangle$ be its children in order.
  For every pair~$i$ and~$j$ with~$1\le i < j < \ell$ or~$1< i < j \le \ell$,
  \[
    V_{i-1} \cap V_{i} \setminus V_j \neq \emptyset \quad \text{or}  \quad V_j \cap V_{j+1} \setminus V_{i} \neq \emptyset,
  \]
  where $V_i = V(B_i)$ for $i=1,\ldots,\ell$, and $V_0 = V_{\ell+1} = \emptyset$.
\end{proposition}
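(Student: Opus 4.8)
The plan is to prove the contrapositive by a surgery on clique paths, invoking Theorem~\ref{thm:pq-trees}. Suppose the conclusion fails for some $i,j$ with $1\le i<j\le\ell$ and $(i,j)\ne(1,\ell)$, so that $V_{i-1}\cap V_i\subseteq V_j$ and $V_j\cap V_{j+1}\subseteq V_i$. Fix a clique path $\pi$ of $H$ equal to the leaf order of $T$; the leaves below $A$ then occupy a contiguous segment of $\pi$ that reads as $\rho_1\rho_2\cdots\rho_\ell$, where $\rho_k$ lists the leaves below $B_k$ and the sets $\cliques(B_1),\dots,\cliques(B_\ell)$ of these leaves are pairwise disjoint. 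Let $\Sigma$ be the contiguous sub-segment $\rho_i\rho_{i+1}\cdots\rho_j$, and let $\pi'$ be obtained from $\pi$ by reversing $\Sigma$ as a block of cliques, leaving everything outside $\Sigma$ in place. I will show $\pi'$ is a clique path of $H$ that no tree obtainable from $T$ can realize.

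The crucial step is to verify that $\pi'$ is again a clique path, i.e., that for every vertex $u$ the maximal cliques containing $u$ stay consecutive in $\pi'$. They form a contiguous run in $\pi$ and $\Sigma$ is a contiguous block, so reversing $\Sigma$ can only break this run if it meets $\Sigma$ on exactly one side and fails to contain all of $\Sigma$; say it enters $\Sigma$ through the left end (the right case is symmetric, with the roles of $\rho_j$ and $\rho_i$ swapped). If the run of $u$ does not reach the block $\rho_j$, then either $i\ge2$ and $u\in V_{i-1}\cap V_i\setminus V_j$, contradicting $V_{i-1}\cap V_i\subseteq V_j$, or $i=1$ and $u$ lies outside $\cliques(A)$ on the left, whence $u\in\encomp(A)\subseteq\univ(A)$ by Proposition~\ref{prop:PQ-tree-nodes}(\ref{item:PQ-tree-ex-univ}), so $\cliques(A)\subseteq\cliques(u)$ and $u$ actually covers all of $\Sigma$. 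If instead the run of $u$ reaches $\rho_j$ but covers only part of it, then $u\in V(B_j)$ while $\cliques(u)\not\subseteq\cliques(B_j)$, i.e., $u\notin\inner(B_j)$, so $u\in\encomp(B_j)\subseteq\univ(B_j)$ by the same proposition, hence $\cliques(B_j)\subseteq\cliques(u)$ and $u$ covers all of $\rho_j$; a contradiction either way. Thus no vertex's run is broken, and $\pi'$ is a clique path of $H$.

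To finish, I would derive a contradiction with Theorem~\ref{thm:pq-trees}. That theorem makes $\pi'$ the leaf order of a tree obtained from $T$ by permuting children of P-nodes and reversing children of some Q-nodes; in any such tree $A$ is still a Q-node with children $B_1,\dots,B_\ell$ in this order or reversed, so the segment of $\pi'$ over $\cliques(A)$ is a concatenation of the blocks $\cliques(B_1),\dots,\cliques(B_\ell)$, in forward or in reverse order. Comparing with the construction: if $i\ge2$, that segment begins with $\rho_1$, which forces the forward order, and then the leaf immediately following the prefix $\rho_1\cdots\rho_{i-1}$ must lie in $\cliques(B_i)$; but by construction it is the last leaf of $\rho_j$, which lies in $\cliques(B_j)$, and $\cliques(B_i)\cap\cliques(B_j)=\emptyset$ forces $i=j$, impossible. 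If $i=1$, the very first leaf of the segment is the last leaf of $\rho_j$, lying in $\cliques(B_j)$, so matching the forward or reverse order forces $j\in\{1,\ell\}$, contradicting $1=i<j$ and $(i,j)\ne(1,\ell)$.

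I expect the clique-path verification to be the main obstacle: it is exactly there that both inclusions and Proposition~\ref{prop:PQ-tree-nodes} enter, and the delicate point is the case split distinguishing a run of $u$ that stops short of $\rho_j$ from one that enters $\rho_j$ only partially. Two sanity checks support the approach: the necessity direction is immediate, and for the excluded pair $(i,j)=(1,\ell)$ the path $\pi'$ is just the reversal of $\pi$, which every PQ-tree realizes, so no contradiction arises there.
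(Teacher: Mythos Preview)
Your proof is correct and follows essentially the same approach as the paper's: assume the two inclusions hold, reverse the block of children $B_i,\ldots,B_j$ to obtain a new clique path, and contradict Theorem~\ref{thm:pq-trees} via the rigidity of Q-node child orders. The paper's proof is terse---it simply asserts that the rearranged order $\langle B_1,\ldots,B_{i-1},B_j,\ldots,B_i,B_{j+1},\ldots,B_\ell\rangle$ yields a valid clique order---while you carefully verify this using Proposition~\ref{prop:PQ-tree-nodes}(\ref{item:PQ-tree-ex-univ}) to handle the boundary cases ($i=1$, $j=\ell$, and runs entering a child block only partially), which is a welcome level of detail.
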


Now, we return to the proof of Lemma~\ref{thm:star}.
Let $\pqtree$ be a PQ-tree of the graph~$G^K$ and let~$Z$ be the root node of~$\pqtree$.
We process the PQ-tree $\pqtree$ in a bottom-up manner, and for every node $A$ in $\pqtree$ we construct a clique path $\langle K_1,\ldots,K_{\ell} \rangle$ for $G^K[A]$ that satisfies 
the following conditions:
\begin{subequations}
  \begin{alignat}{6}
    (v,u) \in \mathcal{P} &\cap \inner(A)\times \inner(A) &\implies& \lk(u) = \lk(v) &\lor& \rk(v) = \rk(u),
                                                                                                     \label{eq:main_cond_inner_vertices} 
    \\
    (v,u) \in \mathcal{P} &\cap \encomp(A)\times \inner(A) &\implies& \lk(u)=1 &\lor& \rk(u)=\ell.
                                                                                               \label{eq:main_cond_extreme_vertices} 
  \end{alignat}
\end{subequations}
If both~\eqref{eq:main_cond_inner_vertices} and~\eqref{eq:main_cond_extreme_vertices} hold, we say the clique path~$\langle K_1,\ldots,K_{\ell} \rangle$ of~$G^K[A]$ is \emph{admissible} for the node~$A$.
Note that for any clique path of~$G^K$ satisfying Condition~\eqref{eq:star}, its restriction to the set~$\cliques(A)$ is admissible for $A$, for any node $A$ of $T$.
Since~$\inner(Z) = V(G^K)$,
that $G^{K}$ satisfies Condition~\eqref{eq:star} is equivalent to that~$Z$ satisfies \eqref{eq:main_cond_inner_vertices}.
Condition~\eqref{eq:main_cond_extreme_vertices}, which is satisfied for~$Z$ vacuously because~$\encomp(Z) = \emptyset$, is introduced for the convenience of the inductive construction.
For the induction, we will be focused on the set of vertices that must be put in the end cliques of clique paths of~$G^K[A]$:
\[
  \extreme(A) = \big{\{} u \in \inner(A)\setminus K\mid \text{ there exists } v \in \encomp(A)\cap K \text{ such that } (u,v) \in \mathcal{P}  \big{\}}.
\]
Condition~\eqref{eq:main_cond_extreme_vertices} is then equivalent to that 
an admissible clique path of $A$ must contain every vertex from $\extreme(A)$ in its first or its last clique.

It is simpler to deal with P-nodes.

\begin{lemma}\label{lem:p-node-induction}
Let $A$ be a P-node.
We can find an admissible clique path for~$G^{K}[A]$ if
  \begin{itemize}
  \item $G^{K}$ satisfies Condition~\eqref{eq:natural}, and
  \item there is an admissible clique path for~$G^{K}[B]$ for all children~$B$ of~$A$.
  \end{itemize} 
\end{lemma}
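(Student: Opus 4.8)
The plan is to assemble the required clique path for $G^{K}[A]$ by concatenating the clique paths supplied for the children (after inserting the universal part of $G^{K}[A]$ into every clique), and then to argue that the only way this assembly can fail is that $G^{K}$ already contains one of the forbidden configurations. Write $U=\univ(A)$ and let $B_{1},\dots,B_{k}$ be the children of $A$, so $k\ge 2$. By Proposition~\ref{prop:PQ-tree-nodes}, $\encomp(A)\subseteq U$, the sets $\inner(B_{1}),\dots,\inner(B_{k})$ are exactly the components of $G^{K}[A]-U$, and consequently $\inner(B_{i})\cap U=\emptyset$ and $\cliques(B_{i})\subsetneq\cliques(A)$; in particular every vertex of $\encomp(B_{i})$ is universal in $G^{K}[B_{i}]$ and lies in $U$. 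Fix an admissible clique path $P_{i}$ for $G^{K}[B_{i}]$. Deleting from every clique of $P_{i}$ the vertices of $\encomp(B_{i})$ and collapsing consecutive repeats gives a clique path $P_{i}'$ of $G^{K}[\inner(B_{i})]$ whose first (last) clique is the first (last) clique of $P_{i}$ with the vertices of $\encomp(B_{i})$ removed.

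For an ordering and a choice of orientations of $P_{1}',\dots,P_{k}'$, let $P$ be obtained by concatenating them in that order and adding $U$ to every clique. Since the maximal cliques of $G^{K}[A]$ are exactly the sets $U\cup Q$ with $Q$ a maximal clique of $G^{K}[\inner(B_{i})]$ for some $i$, $P$ is a clique path of $G^{K}[A]$ no matter how we order and orient; it remains to choose these so that $P$ is admissible. If $(v,u)\in\mathcal{P}\cap\inner(A)\times\inner(A)$ then $uv\in E(G^{K})$ (because $u\in N_{G^{K}}[u]\subseteq N_{G^{K}}[v]$), so either $v$ is universal in $G^{K}[A]$, or $u$ and $v$ lie in the same component $\inner(B_{i})$; in the latter case $(v,u)$ is a pair inside $\inner(B_{i})$ and condition~\eqref{eq:main_cond_inner_vertices} for it is inherited from admissibility of $P_{i}$ (restriction to $\inner(B_{i})$ and a possible reversal preserve equalities of left and right indices). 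After this reduction, both~\eqref{eq:main_cond_inner_vertices} and~\eqref{eq:main_cond_extreme_vertices} collapse to the single requirement that every vertex of the set $W=\{u\in\inner(A)\setminus K : \exists\, v\in U\cap K \text{ with } (v,u)\in\mathcal{P}\}$ lies in the first or the last clique of $P$. The vertices of $W$ that are universal in $G^{K}[A]$ lie in every clique of $P$, so they pose no problem; and a short check using $\encomp(B_{i})\subseteq U\cap K$ on one side and restriction to $V(B_{i})$ on the other gives $W\cap\inner(B_{i})=\extreme(B_{i})$, so by admissibility of $P_{i}$ every such vertex already lies in the first or the last clique of $P_{i}'$.

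Call $B_{i}$ \emph{active} if $\extreme(B_{i})\neq\emptyset$. If at most two children are active, and for every active $B_{i}$ the set $\extreme(B_{i})$ can be arranged to lie in one end of its block (which holds unless $\extreme(B_{i})$ contains a vertex lying only in the first clique of $P_{i}'$ and another lying only in the last), then we place the active blocks as the first and last blocks of $P$, orient them so that their $\extreme$-sets face outward, and interleave the remaining blocks arbitrarily; the resulting $P$ is admissible and we are done. So assume this is impossible; then either (i) at least three children $B_{a},B_{b},B_{c}$ are active, or (ii) some active $B_{i}$ has a vertex of $\extreme(B_{i})$ lying only in the first clique of $P_{i}'$ and another lying only in the last. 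In case (i), pick $u_{a}\in\extreme(B_{a})$, $u_{b}\in\extreme(B_{b})$, $u_{c}\in\extreme(B_{c})$: these vertices are outside $K$, pairwise nonadjacent in $G^{K}$ (distinct components of $G^{K}[A]-U$), and each has a witness in $U\cap K$ adjacent in $G^{K}$ to all of $u_{a},u_{b},u_{c}$. Together with their witness(es)---and, if needed, a few separating vertices inside the blocks---they induce in $G^{K}$ one of the claw-type configurations~\ref{fig:claw-unlabeled},~\ref{fig:double-claw-unlabeled}, or~\ref{fig:triple-claw-unlabeled}, contradicting~\eqref{eq:natural}. In case (ii), the two offending vertices are nonadjacent in the connected graph $G^{K}[\inner(B_{i})]$; tracing a shortest path between them and adjoining their witnesses from $U\cap K$ again produces a configuration from Figure~\ref{fig:forbidden-configurations-C_4-free}, the same contradiction.

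The technical heart of the argument is the configuration extraction at the end. One has to split into sub-cases according to whether a single vertex of $U\cap K$ witnesses all three of $u_{a},u_{b},u_{c}$ (respectively both straddling vertices) and according to which of the edges joining these $K$-vertices to the $u$'s lie in $E(G)$ rather than only in $E(G^{K})$, and in case (ii) also according to whether $B_{i}$ is a P- or a Q-node, since that governs the shape of the connecting structure inside $\inner(B_{i})$; matching each sub-case to the right picture in Figure~\ref{fig:forbidden-configurations-C_4-free} is what demands care. Everything else---the structural facts about the P-node, the verification that $P$ is a clique path, the reduction to the requirement on $W$, and the positive assembly---is routine bookkeeping with the PQ-tree.
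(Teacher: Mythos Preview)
Your high-level strategy matches the paper's proof exactly: reduce admissibility of the assembled path to the requirement that every vertex of $\extreme(B_i)$ land in an outer end clique, then show that failure forces either (i) three active children, yielding Configurations~\ref{fig:claw-unlabeled}--\ref{fig:triple-claw-unlabeled}, or (ii) one child whose $\extreme$-set straddles both ends, yielding another forbidden configuration. The positive assembly and the case~(i) analysis are essentially as in the paper.

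There are, however, two concrete gaps in your sketch of case~(ii). First, the configuration extraction needs an extra vertex $x_{0}$ taken from $\inner(B')$ for some \emph{other} child $B'\neq B_{i}$ of $A$: this vertex is adjacent in $G^{K}$ to the witnesses $v_{1},v_{2}\in U$ but to nothing in $\inner(B_{i})$, and it is indispensable in all four configurations the paper obtains here (\ref{fig:(p4+p1)*1-unlabeled}, \ref{fig:dag+2e-unlabeled}, \ref{fig:ddag+2e-unlabeled}, \ref{fig:add-2-unlabeled}). A path inside $\inner(B_{i})$ together with the witnesses alone does not produce any picture in Figure~\ref{fig:forbidden-configurations-C_4-free}. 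Second, your proposed sub-case split ``according to whether $B_{i}$ is a P- or a Q-node'' is spurious: by the definition of PQ-trees, every non-leaf child of a P-node is a Q-node, and a leaf child has a single clique so case~(ii) cannot arise for it. The actual split the paper uses is whether $u_{1}u_{2}\in E(G^{K})$ (then take $x_{1},x_{2}$ making $x_{1}u_{1}u_{2}x_{2}$ an induced $P_{4}$ inside $B_{i}$) or $u_{1}u_{2}\notin E(G^{K})$ (then take a shortest $u_{1}$--$u_{2}$ path through $\inner(B_{i})$, using connectedness of the Q-node), crossed with whether $v_{1}=v_{2}$.

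One minor slip: you write $\encomp(B_{i})\subseteq U\cap K$, but in fact $\encomp(B_{i})=U$ (every vertex universal in $G^{K}[A]$ lies in $V(B_{i})\setminus\inner(B_{i})$, and conversely); what you need is $\encomp(B_{i})\cap K = U\cap K$, which then gives $W\cap\inner(B_{i})=\extreme(B_{i})$ as claimed.
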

\begin{proof}
  By definition, all the non-leaf children of~$A$ are Q-nodes.
  For each child~$B$ of~$A$, we fix an admissible clique path for~$G^{K}[B]$.

  \begin{figure}[h]
  \centering\small
  \begin{subfigure}[t]{0.8\linewidth}
    \centering
    \begin{tikzpicture}[xscale=0.95,yscale=0.7,>=latex,shorten >=-0.4pt,shorten <=-0.4pt]
\foreach \i in {1, 2, 3} {
\draw[region](\i*2-2., 0.8) rectangle (\i*2-.5, 2);
        \node at (\i*2- 1.25, 0.3) {$B_{\i}$};
}
      \draw[{|[right]}-{|[left]}] (0, 1.2) node[above right] {$u_{1}$} -- (.5, 1.2);
      \draw[{|[right]}-{|[left]}] (2, 1.2) node[above right] {$u_{2}$} -- (3.5, 1.2);
      \draw[{|[right]}-{|[left]}] (4.8, 1.2) node[above right] {$u_{3}$} -- (5.5, 1.2);
      
      \foreach \i in {1, 2, 3} {
        \draw[-] (0, 3.5 - \i/3) -- (5.5, 3.5 - \i/3);
        \draw[dotted] (0, 3.5 - \i/3) -- ++(-.3, 0) node[left] {$v_{\i}$};
        \draw[dotted] (5.5, 3.5 - \i/3) -- ++(.3, 0);
      }

      \draw[white] (6.5,0) -- (6.5,1) {};
    \end{tikzpicture}
    \caption{}
    \label{fig:P-node-case-e}
  \end{subfigure}

  \begin{subfigure}[t]{0.2\linewidth}
    \centering
    \begin{tikzpicture}[scale=.9]
      \node[a-vertex, "$v_1$"] (v) at (2, 1) {};
      \foreach \i in {1, 2, 3}
        \node[b-vertex, "$u_\i$" below] (u\i) at (\i, 0) {};
      \foreach \i in {1, 2, 3}
        \draw[very thick, teal] (v) -- (u\i);
    \end{tikzpicture}
    \caption{}
    \label{fig:fc-P-node-case-g}
  \end{subfigure}
\begin{subfigure}[t]{0.2\linewidth}
    \centering
    \begin{tikzpicture}[scale=.9]
      \foreach \i in {1, 2}
      \node[a-vertex, "$v_\i$"] (v\i) at (.5+\i, 1) {};
      \foreach \i in {1, 2, 3} {
        \node[b-vertex, "$u_\i$" below] (u\i) at (\i, 0) {};
      }
      \draw (v1) -- (v2);
      \draw (v1) -- (u3) (v2) -- (u1);
      \draw[witnessed edge] (v1) -- (u1) (v2) -- (u2) (v2) -- (u3);
      \uncertain{v1}{u2};
    \end{tikzpicture}
    \caption{}
    \label{fig:fc-P-node-case-f}
  \end{subfigure}
  \begin{subfigure}[t]{0.2\linewidth}
    \centering
    \begin{tikzpicture}[scale=.9]
      \foreach \i in {1, 2, 3}
      \node[a-vertex, "$v_\i$"] (v\i) at (\i, 1) {};
      \foreach \i in {1, 2, 3}
        \node[b-vertex, "$u_\i$" below] (u\i) at (\i, 0) {};
      \foreach \i in {1, 2, 3}
      \foreach \j in {1, 2, 3} {
        \ifnum \i=\j
          \def\c{witnessed edge}\else
          \def\c{}\fi
        \draw[\c] (v\i) -- (u\j);
        }
      \draw (v1) -- (v2) -- (v3);
      \draw[bend left=20] (v1) edge (v3);
    \end{tikzpicture}
    \caption{}
    \label{fig:fc-P-node-case-e}
  \end{subfigure}

  \caption{Forbidden configurations encountered when
    Claim~\eqref{eq:P-node-three-children} is violated.  Vertices~$v_{1}$, $v_{2}$, and~$v_{3}$ might be the same.
    The intervals for~$v_{i}, i = 1, 2, 3$, extend from at least one side, indicated by dotted lines.
}
  \label{fig:P-node-three-children} 
\end{figure}
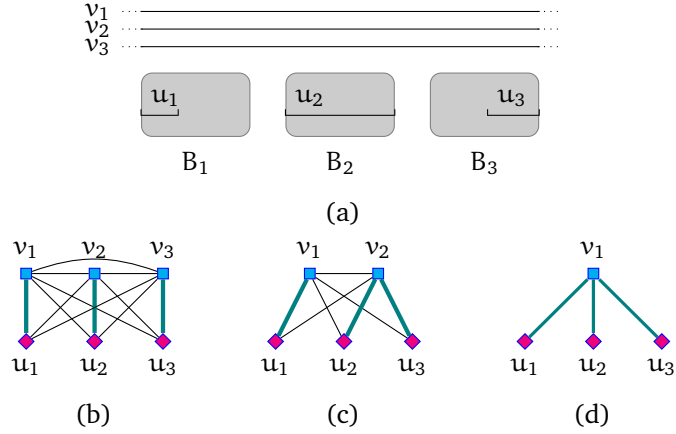

  First, we claim that: 
\begin{equation}
\label{eq:P-node-three-children}
\text{the node~$A$ has at most two children $B$ such that $\extreme(B) \neq \emptyset$.}
\end{equation}
Suppose for contradiction that~$A$ has three distinct children~$B_1$, $B_2$, and~$B_3$ such that~$\extreme(B_i) \neq \emptyset$ for~$i=1,2,3$ (see Figure~\ref{fig:P-node-three-children} for an illustration).
In other words, for~$i=1,2,3$, there are~$u_i \in \inner(B_i)$ and~$v_i \in \encomp(B_i)$ such that~$(u_i, v_i) \in \mathcal{P}$.
Note that $v_1,v_2$, and~$v_3$ are not necessarily distinct.
By Proposition~\ref{prop:PQ-tree-nodes}\eqref{item:PQ-tree-ex-univ}, all the edges between~$\{v_1,v_2,v_3\}$ and~$\{u_1,u_2,u_3\}$ are present in~$G^{K}$.
If $v_1$, $v_2$, and $v_3$ coincide, then $G^{K}[\{v_1,u_1,u_2,u_3\}]$ is Configuration~\ref{fig:claw-unlabeled} (reproduced and labeled in Figure~\ref{fig:fc-P-node-case-g}).
Hence, assume without loss of generality that $v_2 = v_3 \neq v_1$.  We are in the previous case if~$(u_{1}, v_{2}) \in \mathcal{P}$ or if~$(u_{2}, v_{1}), (u_{3}, v_{1}) \in \mathcal{P}$.
Thus,~$(u_{1}, v_{2}) \not\in \mathcal{P}$, and we may renumber~$u_{2}$ and~$u_{3}$ such that~$(u_{3}, v_{1}) \not\in \mathcal{P}$.
Then $G^{K}[\{v_1,v_2,u_1,u_2,u_3\}]$ is Configuration~\ref{fig:double-claw-unlabeled} (reproduced and labeled in Figure~\ref{fig:fc-P-node-case-f}).
In the rest,~$v_1$, $v_2$, and $v_3$ are all distinct.
If there are distinct~$i, j\in\{1, 2, 3\}$ such that~$(u_{i}, v_{j}) \in \mathcal{P}$, then we are in the previous case.
Otherwise,~$G^{K}[\{v_1,v_2,v_3,u_1,u_2,u_3\}]$ is Configuration~\ref{fig:triple-claw-unlabeled} (reproduced and labeled in Figure~\ref{fig:fc-P-node-case-e}).
  This concludes Claim~\eqref{eq:P-node-three-children}.

\begin{figure}[h]
  \centering \small
    \begin{subfigure}[t]{0.45\linewidth}
    \centering
    \begin{tikzpicture}[xscale=1.25,yscale=0.7,>=latex,shorten >=-0.4pt,shorten <=-0.4pt]
      \foreach \i in {1, 2} {
\draw[region](\i*2.5 - 2.5, 0.5) rectangle (\i*2.5 + .5 - \i, 2);
        \draw[{|[right]}-{|[left]}] (\i*1.5 - 1.5, 1.3) node[above right] {$u_{\i}$} -- ({\i*1.5- 1}, 1.3);
      }
      \draw[{|[right]}-{|[left]}] (2.5, 1.3) node[above right] {$x_{0}$} -- (3.5, 1.3);
      \draw[dashed, thick] (0.3,1.1) -- (1.8, 1.1);
      \node at (1, 0.2) {$B$};
      
      \foreach \i in {1, 2} {
        \draw[-] (0, 3. - \i/3) -- (3.5, 3. - \i/3);
        \draw[dotted] (0, 3. - \i/3) -- ++(-.3, 0) node[left] {$v_{\i}$};
        \draw[dotted] (3.5, 3. - \i/3) -- ++(.3, 0);
      }

      \draw[white] (6.5,0) -- (6.5,1) {};
    \end{tikzpicture}
    \caption{}
    \label{fig:P-node-case-a}
  \end{subfigure}
  \begin{subfigure}[t]{0.45\linewidth}
    \begin{tikzpicture}[xscale=1.25,yscale=0.7,>=latex,shorten >=-0.4pt,shorten <=-0.4pt, label distance=-4pt]
      \foreach \i/\p in {1/above, 2/below} {
\draw[region](\i*2.5 - 2.5, 0.5) rectangle (\i*2.5 + .5 - \i, 2);
        \pgfmathsetmacro{\y}{1.2 + Mod(\i, 2)/5}        
        \draw[{|[right]}-{|[left]}] (\i*1.5 - 1.5, \y) -- node[midway, "$x_{\i}$" \p] {} ({\i*1.5- 1}, \y);
\draw[{|[left]}-{|[right]}] (2.7 - \i*.7, \y) -- node[midway, "$u_{\the\numexpr3-\i\relax}$" \p] {} (1.4 - \i*.7, \y);
}
      \draw[{|[right]}-{|[left]}] (2.5, 1.3) node[above right] {$x_{0}$} -- (3.5, 1.3);
      \node at (1, 0.2) {$B$};
      
      \foreach \i in {1, 2} {
        \draw[-] (0, 3. - \i/3) -- (3.5, 3. - \i/3);
        \draw[dotted] (0, 3. - \i/3) -- ++(-.3, 0) node[left] {$v_{\i}$};
        \draw[dotted] (3.5, 3. - \i/3) -- ++(.3, 0);
      }

      \draw[white] (6.5,0) -- (6.5,1) {};
    \end{tikzpicture}
    \caption{}
    \label{fig:P-node-case-c}
  \end{subfigure}

  \begin{subfigure}[t]{0.22\linewidth}
    \centering
    \begin{tikzpicture}[scale=.72,yscale=1.25]
      \node[empty vertex] (x1) at (2.0,0) {};
      \coordinate (lx1) at (2,-0.3) {};
      \node[empty vertex] (xp) at (3.0,0) {};
      \coordinate (lxp) at (3,-0.3) {};
      \node[b-vertex] (u1) at (1, 0) {};
      \coordinate (lu1) at (1,-0.3) {};
      \node[b-vertex] (u2) at (4, 0) {};
      \coordinate (lu2) at (4,-0.3) {};
      \node[a-vertex] (v) at (2.5, 1) {};
      \coordinate (lv) at (2.9,1) {};
      \node[empty vertex] (x0) at (2.5,1.75) {};
      \coordinate (lx0) at (2.9,1.75) {};
      \draw (v) -- (x0)  {};
      \draw[ultra thick, teal] (v) -- (u1);
      \draw[ultra thick, teal] (v) -- (u2);
      \draw (v) -- (x1) {};
      \draw (v) -- (xp) {};
      \draw (u1) -- (x1) {};
      \draw[dashed] (x1) -- (xp) {};
      \draw (xp) -- (u2) {};
      \tikzstyle{every node}=[inner sep=1pt]
      \begin{footnotesize}
        \node at (lv) {$v_1$};
        \node at (lu1) {$u_1$};
        \node at (lu2) {$u_2$};
        \node at (lx0) {$x_0$};
        \node at (lx1) {$x_1$};
        \node at (lxp) {$x_p$};
      \end{footnotesize}
    \end{tikzpicture}
    \caption{}
    \label{fig:fc-P-node-case-b}
  \end{subfigure}
  \begin{subfigure}[t]{0.22\linewidth}
    \centering
    \begin{tikzpicture}[scale=.72,yscale=1.25]
      \node[empty vertex] (x1) at (2.0,0) {};
      \coordinate (lx1) at (2,-0.3) {};
      \node[empty vertex] (xp) at (3.0,0) {};
      \coordinate (lxp) at (3,-0.3) {};
      \node[b-vertex] (u1) at (1, 0) {};
      \coordinate (lu1) at (1,-0.3) {};
      \node[b-vertex] (u2) at (4, 0) {};
      \coordinate (lu2) at (4,-0.3) {};
      \node[a-vertex] (v1) at (2, 1) {};
      \coordinate (lv1) at (1.6,1) {};
      \node[a-vertex] (v2) at (3, 1) {};
      \coordinate (lv2) at (3.4,1) {};
      \node[empty vertex] (x0) at (2.5,1.75) {};
      \coordinate (lx0) at (2.9,1.75) {};
      \draw (v1) -- (x0)  {};
      \draw (v2) -- (x0)  {};
      \draw (v1) -- (v2)  {};
      \draw[ultra thick, teal] (v1) -- (u1);
      \draw[ultra thick, teal] (v2) -- (u2);
      \draw (v1) -- (x1) {};
      \draw (v1) -- (xp) {};
      \draw (v1) -- (u2) {};
      \draw (v2) -- (u1) {};
      \draw (v2) -- (x1) {};
      \draw (v2) -- (xp) {};
      \draw (u1) -- (x1) {};
      \draw[dashed] (x1) -- (xp) {};
      \draw (xp) -- (u2) {};
      \tikzstyle{every node}=[inner sep=1pt]
      \begin{footnotesize}
        \node at (lv1) {$v_1$};
        \node at (lv2) {$v_2$};
        \node at (lu1) {$u_1$};
        \node at (lu2) {$u_2$};
        \node at (lx0) {$x_0$};
        \node at (lx1) {$x_1$};
        \node at (lxp) {$x_p$};
      \end{footnotesize}
    \end{tikzpicture}
    \caption{}
    \label{fig:fc-P-node-case-a}
  \end{subfigure}
  \hspace{0.25cm}
  \begin{subfigure}[t]{0.22\linewidth}
    \centering
    \begin{tikzpicture}[scale=.72,yscale=1.25]
      \node[empty vertex] (x1) at (1.0,0) {};
      \coordinate (lx1) at (1,-0.4) {};
      \node[empty vertex] (x2) at (4.0,0) {};
      \coordinate (lx2) at (4,-0.4) {};
      \node[b-vertex] (u1) at (2, 0) {};
      \coordinate (lu1) at (2,-0.4) {};
      \node[b-vertex] (u2) at (3, 0) {};
      \coordinate (lu2) at (3,-0.4) {};
      \node[a-vertex] (v) at (2.5, 1) {};
      \coordinate (lv) at (2.9,1) {};
      \node[empty vertex] (x0) at (2.5,1.75) {};
      \coordinate (lx0) at (2.9,1.75) {};
      \draw (v) -- (x0)  {};
      \draw[ultra thick, teal] (v) -- (u1);
      \draw[ultra thick, teal] (v) -- (u2);
      \draw (v) -- (x1) {};
      \draw (v) -- (x2) {};
      \draw (x1) -- (u1) -- (u2) --(x2) {};
      \tikzstyle{every node}=[inner sep=1pt]
      \begin{footnotesize}
        \node at (lv) {$v_1$};
        \node at (lu1) {$u_1$};
        \node at (lu2) {$u_2$};
        \node at (lx0) {$x_0$};
        \node at (lx1) {$x_1$};
        \node at (lx2) {$x_2$};
      \end{footnotesize}
    \end{tikzpicture}
    \caption{}
    \label{fig:fc-P-node-case-d}
  \end{subfigure}
  \begin{subfigure}[t]{0.22\linewidth}
    \centering
    \begin{tikzpicture}[scale=.72,yscale=1.25]
      \node[empty vertex] (x1) at (1.0,0) {};
      \coordinate (lx1) at (1,-0.4) {};
      \node[empty vertex] (x2) at (4.0,0) {};
      \coordinate (lx2) at (4,-0.4) {};
      \node[b-vertex] (u1) at (2, 0) {};
      \coordinate (lu1) at (2,-0.4) {};
      \node[b-vertex] (u2) at (3, 0) {};
      \coordinate (lu2) at (3,-0.4) {};
      \node[a-vertex] (v1) at (2, 1) {};
      \coordinate (lv1) at (1.6,1) {};
      \node[a-vertex] (v2) at (3, 1) {};
      \coordinate (lv2) at (3.4,1) {};
      \node[empty vertex] (x0) at (2.5,1.75) {};
      \coordinate (lx0) at (2.9,1.75) {};
      \draw (v1) -- (x0)  {};
      \draw (v1) -- (v2)  {};
      \draw (v2) -- (x0)  {};
      \draw[ultra thick, teal] (v1) -- (u1);
      \draw[ultra thick, teal] (v2) -- (u2);
      \draw (v1) -- (x1) {};
      \draw (v1) -- (x2) {};
      \draw (v1) -- (u2) {};
      \draw (v2) -- (u1) {};
      \draw (v2) -- (x1) {};
      \draw (v2) -- (x2) {};
      \draw (x1) -- (u1) -- (u2) --(x2) {};
      \tikzstyle{every node}=[inner sep=1pt]
      \begin{footnotesize}
        \node at (lv1) {$v_1$};
        \node at (lv2) {$v_2$};
        \node at (lu1) {$u_1$};
        \node at (lu2) {$u_2$};
        \node at (lx0) {$x_0$};
        \node at (lx1) {$x_1$};
        \node at (lx2) {$x_2$};
      \end{footnotesize}
    \end{tikzpicture}
    \caption{}
    \label{fig:fc-P-node-case-c}
  \end{subfigure}
  \caption{Forbidden configurations encountered when
    Claim~\eqref{eq:P-node-one-child} is violated.  Vertices~$v_{1}$ and~$v_{2}$ might be the same (c, e).
    The intervals for~$v_{i}, i = 1, 2$, extend from at least one side, indicated by dotted lines.
}
  \label{fig:P-node-one-child} 
\end{figure}

Next, we claim that for each child~$B$ of~$A$, 
  \begin{equation}
    \label{eq:P-node-one-child}
    \text{One end of the clique path of~$G^{K}[B]$ contains
      all the vertices in $\extreme(B)$.}
\end{equation}
Claim~\ref{eq:P-node-one-child} holds vacuously when~$\extreme(B)=\emptyset$, and hence we assume otherwise.
Suppose for contradiction that there are vertices~$u_1$ and~$u_2$ in different ends (see Figure~\ref{fig:P-node-one-child} for an illustration).
Let
$v_1,v_2 \in \encomp(B)$ be such that $(u_1,v_1) \in \mathcal{P}$ and $(u_2,v_2) \in \mathcal{P}$.
It is possible that $v_1 =v_2$.
Let $B'$ be a child of $A$ different from $B$, and let $x_0$ be a vertex from $\inner(B')$.

Suppose first that $u_1u_2 \notin E(G^K)$.
Since $B$ is a Q-node, by Proposition~\ref{prop:PQ-tree-nodes}\eqref{item:PQ-tree-connectivity}, there is an induced path joining $u_1$ and $u_2$ with all inner vertices in $\inner(B)$, say $u_1x_1\ldots x_pu_2$.
By Proposition~\ref{prop:PQ-tree-nodes}\eqref{item:PQ-tree-ex-univ}, \[
  \inner(B)\cup \inner(B')\subseteq N_{G^{K}}(v_{1}) \cap N_{G^{K}}(v_{2}).
\]
If $v_1 = v_2$, then $G^K[\{v_1,x_0,u_1,x_1,\ldots,x_p,u_2\}]$ is Configuration~\ref{fig:dag+2e-unlabeled} (reproduced and labeled in Figure~\ref{fig:fc-P-node-case-b}).
Hence, $v_1 \neq v_2$.
We are in the previous case if~$(u_{1}, v_{2})$ or~$(u_{2}, v_{1})$ is in~$\mathcal{P}$.
Otherwise,~$G^K[\{v_1,v_2,x_0,u_1,x_1,\ldots,x_p,u_2\}]$ is Configuration~\ref{fig:ddag+2e-unlabeled} (reproduced and labeled in Figure~\ref{fig:fc-P-node-case-a}).

Suppose then $u_1u_2 \in E(G^K)$.
Since both ends of the admissible clique path for $B$ are maximal cliques, we can find $x_1,x_2 \in \inner(B)$ such that $x_1u_1u_2x_2$ is an induced path in $G^K$. 
If $v_1 = v_2$, then $G^K[\{v_1,x_0,x_1,u_1,u_2,x_2\}]$ is Configuration~\ref{fig:(p4+p1)*1-unlabeled} (reproduced and labeled in Figure~\ref{fig:fc-P-node-case-d}).
Hence, $v_1 \neq v_2$.
We are in the previous case if~$(u_{1}, v_{2})$ or~$(u_{2}, v_{1})$ is in~$\mathcal{P}$.
Otherwise,~$G^K[\{x_0,v_1,v_2,x_1,u_1,u_2,x_2\}]$ is Configuration~\ref{fig:add-2-unlabeled} (reproduced and labeled in Figure~\ref{fig:fc-P-node-case-c}).
This verifies Claim~\eqref{eq:P-node-one-child}.

We now construct an admissible clique path for $G^K[A]$.
We put the at most two children~$B$ of $A$ with $\extreme(B) \neq \emptyset$ at the two ends.
For either of them, we make the end clique that contains all the vertices in $\extreme(B)$ at the end of the clique path for $G^K[A]$.
We put the clique paths of the remaining children in the middle.
Claims~\eqref{eq:P-node-three-children} and~\eqref{eq:P-node-one-child} assert that this clique path is admissible for $A$.
\end{proof}

For Q-nodes, we need the following result of Gimbel~\cite{gimbel-88-end-vertices}.
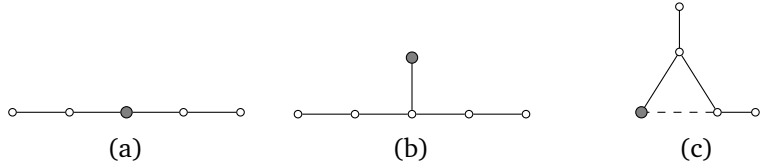
\begin{figure}[h]
  \centering \small
  \begin{subfigure}[b]{0.22\linewidth}
    \centering
    \begin{tikzpicture}[scale=.75]
      \node[filled vertex] (c) at (0, 0) {};
      \foreach \i in {1, 3} {
        \node[empty vertex] (u\i) at ({90*(\i-1)}:2) {};
        \node[empty vertex] (v\i) at ({90*(\i-1)}:1) {};
        \draw (u\i) -- (v\i) -- (c);
      }
    \end{tikzpicture}
    \caption{}
    \label{fig:gimble-5-path}
  \end{subfigure}
  \,
  \begin{subfigure}[b]{0.22\linewidth}
    \centering
    \begin{tikzpicture}[scale=.75]
      \node[empty vertex] (c) at (0, 0) {};
      \foreach \i in {1, 3} {
        \node[empty vertex] (u\i) at ({90*(\i-1)}:2) {};
        \node[empty vertex] (v\i) at ({90*(\i-1)}:1) {};
        \draw (u\i) -- (v\i) -- (c);
      }
      \draw (90:1) node[filled vertex] {} -- (c);
    \end{tikzpicture}
    \caption{}
    \label{fig:gimble-5-path-vertex}
  \end{subfigure}
  \,
  \begin{subfigure}[b]{0.22\linewidth}
    \centering
    \begin{tikzpicture}[yscale=.8]
      \draw[dashed] (0, 0) -- (1., 0);
      \draw (0.5, 1.75) node[empty vertex] {} -- (0.5, 1) node[empty vertex] (c) {} -- (0, 0) node[filled vertex] {};
      \foreach[count =\i] \x/\t in {1/empty } {
        \node[\t vertex] (v\i) at ({1. * \x}, 0) {};
        \draw (v\i) -- (c);
      }
      \draw (v1) -- (1.5, 0) node[empty vertex] {};
    \end{tikzpicture}
    \caption{}
    \label{fig:gimble-long-path}
  \end{subfigure}
  \caption{The solid node cannot be in an end clique in any clique path.  The dashed line in (c) indicates a path of a positive length.}
  \label{fig:gimbel-graphs}
\end{figure}
 
\begin{theorem}[\cite{gimbel-88-end-vertices}]
  \label{thm:end-interval}
  Let $H$ be an interval graph, and~$u$ a vertex of $H$.
There is a clique path of~$H$ in which $u$ is in the first or last clique
  if and only if $u$ is not the highlighted vertex of an induced subgraph in Figure~\ref{fig:gimbel-graphs}.
\end{theorem}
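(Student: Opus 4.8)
The plan is to prove the equivalence by reasoning with clique paths, using three elementary facts recalled in Section~\ref{sec:McConnell-transformation}: in a clique path $\langle K_{1},\dots,K_{\ell}\rangle$ of an interval graph the maximal cliques containing a fixed vertex $v$ form a contiguous block $[\lk(v),\rk(v)]$; the reverse $\langle K_{\ell},\dots,K_{1}\rangle$ is again a clique path, so ``$v$ lies in the first clique of some clique path'' is equivalent to ``$v$ lies in the last clique of some clique path''; and in a connected interval graph consecutive cliques of a clique path share a vertex (otherwise the two halves witness a disconnection). Since a vertex is an end vertex of $H$ precisely when it is an end vertex of its own connected component (the remaining components can be concatenated on either side), we assume $H$ is connected, and it suffices to decide whether some clique path places $u$ in its first clique.

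\emph{Necessity.} Suppose $H$ contains one of the configurations of Figure~\ref{fig:gimbel-graphs} with $u$ highlighted; we show $\lk(u)\ge 2$ in every clique path of $H$. Take first Configuration~\ref{fig:gimble-5-path}: an induced $P_{5}$ $x_{1}x_{2}x_{3}x_{4}x_{5}$ with $u=x_{3}$, and set $[a,b]=[\lk(x_{3}),\rk(x_{3})]$. Because $x_{2}x_{3}\in E(H)$ while $x_{1}x_{3}\notin E(H)$, contiguity forces $[\lk(x_{2}),\rk(x_{2})]$ to reach strictly beyond $[a,b]$ on the side occupied by $x_{1}$; likewise $[\lk(x_{4}),\rk(x_{4})]$ reaches strictly beyond $[a,b]$ on the side occupied by $x_{5}$. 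If $x_{1}$ and $x_{5}$ lay on the same side of $[a,b]$, then $x_{2}$ and $x_{4}$ would both contain the clique immediately past that side, forcing $x_{2}x_{4}\in E(H)$, a contradiction; hence $x_{1}$ and $x_{5}$ flank $[a,b]$, and in particular a clique precedes $[a,b]$, i.e.\ $a\ge 2$, so $u=x_{3}\notin K_{1}$. Configurations~\ref{fig:gimble-5-path-vertex} and~\ref{fig:gimble-long-path} reduce to this pattern by the same contiguity bookkeeping, slightly longer: each private pendant forces its host's block to extend to one side, while the central vertex $c$ (carrying the two length-two ``arms'' in Configuration~\ref{fig:gimble-5-path-vertex}, or reached from $u$ through the displayed path in Configuration~\ref{fig:gimble-long-path}) forces cliques on the other side; combined, these pin a clique strictly to the left of $u$'s block.

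\emph{Sufficiency.} We argue the contrapositive: assuming no clique path places $u$ in an end clique, we exhibit one of the configurations with $u$ highlighted. Fix a PQ-tree $T$ of $H$ (Theorem~\ref{thm:pq-trees}) and let $A$ be the lowest node with $\cliques(u)\subseteq\cliques(A)$, so $u\in\inner(A)$ and no single child of $A$ captures all of $\cliques(u)$. A leaf of $\cliques(u)$ can be brought to an end of the leaf order (making $u$ an end vertex) exactly when, along the root-to-leaf path, every P-node can place the branch toward that leaf first and every Q-node has that branch as an extreme child; since a P-node always affords the former, and a sub-PQ-tree can always realise its own leftmost leaf, one checks that failure localizes to a Q-node obstruction at or above $A$: either $A$ is a Q-node and $\cliques(u)$ is confined to strictly interior children $D_{i},\dots,D_{j}$ of $A$ (so $D_{i-1},D_{j+1}$ exist), or a proper Q-node ancestor of $A$ has its branch toward $A$ as a strictly interior child. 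In either case $u$ is \emph{flanked}, and one extracts the configuration as follows. Using Proposition~\ref{prop:PQ-tree-nodes}\eqref{item:PQ-tree-ex-univ} to identify the universal vertices of the relevant regions, pick $x$ and $y$ in the non-universal inner parts of the two flanking subtrees; then $xy\notin E(H)$ and, by Proposition~\ref{prop:PQ-tree-nodes}\eqref{item:PQ-tree-connectivity}, a shortest induced $x$--$y$ path $P$ lives inside the Q-node's non-universal part. Let $z$ and $z'$ be the first and last vertices of $P$ lying in $u$'s straddled region (both exist and both are adjacent to $u$, being in a region where $u$ is universal), and let $x'$ and $y'$ be their neighbours on $P$ just outside that region (so $x',y'\notin N_{H}[u]$). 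If $z\ne z'$ and they are non-adjacent along $P$, then $x'\,z\,u\,z'\,y'$ is an induced $P_{5}$ centred at $u$, which is Configuration~\ref{fig:gimble-5-path}; if $z$ and $z'$ are adjacent along $P$, then $\{x',z,z',y',u\}$ is Configuration~\ref{fig:gimble-long-path}; and if $z=z'$ then the arms of $P$ at $z$ together with $u$ as a pendant at $z$ give Configuration~\ref{fig:gimble-5-path-vertex} (the degenerate sub-case where both arms have length one cannot occur here, since it would correspond to $u$ being an end vertex).

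I expect the routine part to be necessity, and the real work to be the sufficiency analysis — in particular verifying that the Q-node obstruction truly localizes as claimed, and then the degenerate bookkeeping that turns a flanked $u$ into a genuinely \emph{induced} copy of one of the three configurations: one must choose the witnessing vertices and the path so that every prescribed non-edge actually holds (this is where minimality of the path and the $\inner$/$\encomp$ structure of PQ-tree nodes are used), and one must correctly separate the long obstruction (Configuration~\ref{fig:gimble-5-path}) from the two pendant variants according to whether the arms reaching $u$ have length one or more.
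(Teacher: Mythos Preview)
The paper does not prove Theorem~\ref{thm:end-interval}; it is cited from Gimbel~\cite{gimbel-88-end-vertices} and used as a black box inside the proof of Lemma~\ref{lem:q-node-induction}. There is therefore no proof in the paper to compare your attempt against.

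On the merits of your sketch: the necessity direction is standard and your $P_{5}$ argument is correct, though you should actually carry out the analogous interval bookkeeping for Configurations~\ref{fig:gimble-5-path-vertex} and~\ref{fig:gimble-long-path} rather than gesturing at it. The sufficiency direction has the right architecture (localize the failure to a Q-node, then extract a configuration from a shortest path crossing $u$'s span), but two steps are genuine gaps. First, the assertion that ``failure localizes to a Q-node obstruction at or above $A$'' is the structural heart of the argument and you have only stated it; you must argue it, and in the case where the obstruction sits at a strict Q-ancestor of $A$ you must also cope with the fact that $u$ is then universal in $H[A]$, so your ``region of $u$'' is all of $V(A)$ and the roles of $z,z'$ shift. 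Second, your dismissal of the degenerate sub-case $z=z'$ with both arms of length one (``would correspond to $u$ being an end vertex'') is incorrect, not merely incomplete: take a Q-node with children $B_{1},\dots,B_{5}$, let $u$ span $B_{2},B_{3}$, pick $z$ spanning $B_{1},\dots,B_{4}$, and $x\in\inner(B_{1})$, $y\in\inner(B_{4})$; then $P=x\,z\,y$, $z=z'$, both arms have length one, yet $u$ is strictly interior and cannot be an end vertex. In that situation you must still produce a configuration, and the four vertices $\{x,z,y,u\}$ alone do not suffice; you need to reach further (e.g.\ into $B_{5}$, or use Proposition~\ref{prop:V-sets} to find a vertex separating $z$'s span from an end) to complete one of the three patterns. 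The fix is to choose the witnesses more carefully up front rather than to argue the degenerate case away.
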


\begin{lemma}\label{lem:q-node-induction}
Let~$A$ be a Q-node.
We can find an admissible clique path for~$G^{K}[A]$ if
\begin{itemize}
\item $G^{K}$ satisfies Condition~\eqref{eq:natural}, and
\item there is an admissible clique path for~$G^{K}[B]$ for all children~$B$ of~$A$.
\end{itemize}
\end{lemma}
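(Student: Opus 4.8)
The proof runs in parallel with that of Lemma~\ref{lem:p-node-induction}, with extra bookkeeping forced by the fact that the children of a Q-node are linearly ordered. Let $\langle B_1,\dots,B_\ell\rangle$ be the children of $A$ in order, and fix an admissible clique path $\sigma_i$ for $G^K[B_i]$ for each $i$. First I observe that placing $\sigma_1,\dots,\sigma_\ell$ one after another in this order, and inserting into every clique the set $\univ(A)$ of universal vertices of $G^K[A]$, always yields a clique path of $G^K[A]$, and that by Theorem~\ref{thm:pq-trees} we may additionally reverse the whole sequence and, \emph{independently}, reverse any of the $\sigma_i$. The independence uses Proposition~\ref{prop:PQ-tree-nodes}\eqref{item:PQ-tree-ex-univ}: a vertex shared by consecutive children $B_i$ and $B_{i+1}$ lies in $\encomp(B_i)\cap\encomp(B_{i+1})$, hence is universal in both $G^K[B_i]$ and $G^K[B_{i+1}]$, so it occupies every clique of both $\sigma_i$ and $\sigma_{i+1}$ and consecutivity at the boundary survives any reversal. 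It therefore remains to choose the orientations of the $\sigma_i$ and of the global order so that the assembled clique path $\sigma=\langle K_1,\dots,K_t\rangle$ is admissible for $A$.

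The second step translates admissibility into placement demands on those orientations. For $(v,u)\in\mathcal{P}\cap\inner(A)\times\inner(A)$ the containment $N_{G^K}[u]\subseteq N_{G^K}[v]$ forces $\cliques(u)\subseteq\cliques(v)$ in every clique path, so the children met by $u$ form a subinterval of those met by $v$. If $v\in\inner(B_i)$ then $u\in\inner(B_i)$ too, and \eqref{eq:main_cond_inner_vertices} for the pair is already delivered by the admissibility of $\sigma_i$ (an equality of left, resp.\ right, endpoints within $\sigma_i$ lifts to one within $\sigma$, flipped if $\sigma_i$ is reversed). Otherwise $v\in K$ meets a run $B_a,\dots,B_b$ of at least two children, is universal on each of them by Proposition~\ref{prop:PQ-tree-nodes}\eqref{item:PQ-tree-ex-univ}, and has clique interval in $\sigma$ exactly $\cliques(B_a)\cup\dots\cup\cliques(B_b)$, independent of the orientations; comparing this with the clique interval of $u$, one finds that \eqref{eq:main_cond_inner_vertices} either holds automatically, or fails for every clique path of $G^K[A]$ because $u$ lies in a middle child (hence in neither $K_1$ nor $K_t$), or—when $u$ lies in $B_a$ or $B_b$, necessarily then in $\extreme(B_a)$ resp.\ $\extreme(B_b)$—reduces to a demand that $u$ occupy a prescribed one of the two end cliques of $\sigma_a$ resp.\ $\sigma_b$. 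Finally, \eqref{eq:main_cond_extreme_vertices} asks each $u\in\extreme(A)$ to lie in $K_1$ or $K_t$; if $u$ is universal in $G^K[A]$ this is free, and otherwise it confines $u$ to $B_1$ or $B_\ell$ and demands that $u$ occupy the end clique of $\sigma_1$ resp.\ $\sigma_\ell$ forming an end clique of $\sigma$. Altogether, admissibility of $\sigma$ for $A$ amounts to finitely many demands, each of the form ``$u$ lies in an end clique of $G^K[A]$'' or ``$u$ lies in a prescribed end clique of some $\sigma_i$'', together with the risk that some demand of the first form is intrinsically unmeetable.

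An unmeetable demand is dispatched by Gimbel's Theorem~\ref{thm:end-interval}: if $u$ is forced into, but can never occupy, an end clique of $G^K[A]$, then $u$ is the highlighted vertex of an induced subgraph of $G^K[A]$ of one of the three shapes in Figure~\ref{fig:gimbel-graphs}. Adjoining the $K$-witness $v$ of $u$—it satisfies $vu\in E(G)$ and, by $N_{G^K}[u]\subseteq N_{G^K}[v]$, dominates $N_{G^K}(u)$, hence the part of that subgraph adjacent to $u$, the remaining edges to $v$ staying ``uncertain''—produces an annotated copy of one of \ref{fig:whipping-top-1-unlabeled}, \ref{fig:bent-whipping-top-unlabeled}, \ref{fig:dag+e-unlabeled}, or \ref{fig:ddag+e-unlabeled}, contradicting \eqref{eq:natural}.

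It remains to make the demands of the form ``$u$ in a prescribed end of $\sigma_i$'' simultaneously satisfiable. All those attached to a fixed $B_i$ concern vertices of $\extreme(B_i)$, each of which already lies in \emph{some} end clique of $\sigma_i$; hence an orientation of $\sigma_i$ meeting them exists unless two such vertices $u_1,u_2$ demanded to opposite ends are pinned by $\sigma_i$ to one end, or two demanded to the same end are split by $\sigma_i$. In that case I rerun Claim~\eqref{eq:P-node-one-child}: join $u_1$ and $u_2$ by an induced path through $\inner(B_i)$, split into the cases $u_1u_2\in E(G^K)$ and $u_1u_2\notin E(G^K)$, adjoin their $K$-witnesses and a vertex taken from another child of $A$ (which exists since $\ell\ge 3$; the relevant non-adjacencies come from Proposition~\ref{prop:V-sets}), and read off one of \ref{fig:(p4+p1)*1-unlabeled}, \ref{fig:add-2-unlabeled}, \ref{fig:dag+2e-unlabeled}, or \ref{fig:ddag+2e-unlabeled} (and, in the Q-node-specific subcases, \ref{fig:ab-wheel-unlabeled}, \ref{fig:add-1-unlabeled}, or another configuration of Figure~\ref{fig:forbidden-configurations-C_4-free}), again contradicting \eqref{eq:natural}. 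Since demands attached to distinct children involve distinct, independently reversible $\sigma_i$, we may orient each $\sigma_i$ and then the global order consistently, obtaining an admissible clique path for $A$. The main obstacle is precisely the translation step and the configuration-matching: reading \eqref{eq:main_cond_inner_vertices} and \eqref{eq:main_cond_extreme_vertices} correctly against the overlap structure of the children, and matching each failure mode to a specific configuration of Figure~\ref{fig:forbidden-configurations-C_4-free}; the mere existence of the concatenated clique path is routine.
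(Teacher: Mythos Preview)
Your skeleton matches the paper's: reduce admissibility of the concatenated clique path to three local claims (the paper's Claims~(1), (2), (3)), and derive a configuration of Figure~\ref{fig:forbidden-configurations-C_4-free} whenever a claim fails. The gap is that your configuration-matching in the two hard paragraphs is not an argument but a hope.

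For the ``unmeetable demand'' paragraph: your reasoning is that $N_{G^K}[u]\subseteq N_{G^K}[v]$ makes $v$ adjacent to the neighbours of $u$ inside a Gimbel obstruction, while the remaining edges to $v$ may be left ``uncertain''. That is correct only when $v$ is universal in $G^K[A]$ (so in particular for the $\extreme(A)$ demands, where the witness $v\in\encomp(A)\subseteq\univ(A)$). But for a pair $(v,u)\in\mathcal{P}\cap\inner(A)\times\inner(A)$ with $\lc(v)<\lc(u)\le\rc(u)<\rc(v)$, the vertex $v$ need not be universal in $G^K[A]$, and then the Gimbel subgraph for $u$ may reach outside the clique-range of $v$. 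The resulting labelled graph on $\{v\}\cup V(F)$ is in general \emph{not} one of the configurations in Figure~\ref{fig:forbidden-configurations-C_4-free} (try the $P_5$ case with $v$ adjacent to exactly one endpoint). The paper does not ``adjoin $v$ to $F$'' in this case at all; instead it builds the obstruction from scratch using Proposition~\ref{prop:V-sets} and carefully chosen vertices in $V_{\lc(v)-1},V_{\lc(v)},V_{\rc(u)},V_{\rc(v)}$, with a genuine case split (Cases~1, 2.1, 2.2) that produces, among others, configurations~\ref{fig:p5+1-unlabeled}, \ref{fig:(p4+p1)*1-unlabeled}, \ref{fig:ab-wheel-unlabeled}, and \ref{fig:p5x1-unlabeled}---none of which your paragraph names.

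For the final paragraph: ``rerun Claim~\eqref{eq:P-node-one-child}'' does not transfer. In the P-node proof the auxiliary vertex $x_0$ comes from a sibling $B'$ with $\inner(B')$ completely anticomplete to $\inner(B)$; in a Q-node no such sibling need exist, and one must instead pick vertices from $V_{t-1}\setminus V_t$, $V_{t+1}\setminus V_t$, or further along using Proposition~\ref{prop:V-sets}, keeping track of whether $\lc(v_1)=t$ or $\rc(v_1)=t$. This is exactly where the paper's Claim~(3), Case~2 produces configurations~\ref{fig:fork-unlabeled}, \ref{fig:double-fork-unlabeled}, \ref{fig:double-fork+1-unlabeled}, which have no analogue in the P-node lemma and which you never mention. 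Without this analysis you have not shown that conflicting demands on a single $\sigma_i$ are impossible under~\eqref{eq:natural}.
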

\begin{proof}
  Let~$\ell$ be the number of children of~$A$, and let~$\langle B_1, \ldots, B_{\ell} \rangle$ be its children in order.
For~$i=1,\ldots, \ell$, let~$s_{i} = |\mathcal{K}(B_{i})|$ and~$V_{i} = V(B_{i})$.
For notational convenience, we introduce two empty sets $V_0$ and~$V_{\ell+1}$.
Note that for $i=1,\ldots,\ell$, we have $V_{i} \setminus V_{i-1} \neq \emptyset$ and $V_{i} \setminus V_{i+1} \neq \emptyset$.
For each~$u\in V(A)$, let~$\lc(u)$ and~$\rc(u)$ denote the smallest and, respectively, largest index~$i$ such that~$u\in V_{i}$.
Note that they are well defined because~$V(A) = \bigcup_{i=1}^{\ell}V_{i}$.

We want to construct an admissible clique path for~$G^{K}[A]$.  We note that it suffices to prove three claims.
The first is about the positions of~$\lc(u)$ and~$\rc(u)$ with respect to~$\lc(v)$ and~$\rc(v)$.
\begin{enumerate}[(1)]
\item  \label{eq:uv_pair}
\text{For every pair $(v, u) \in \mathcal{P}$ with~$u\in\inner(A)$, it holds $\lc(u) = \lc(v)$ or $\rc(v) = \rc(u)$}.
\end{enumerate}
The next two claims involve two distinct pairs~$(v_1,u_1)$ and~$(v_2,u_2)$ in~$\mathcal{P}\cap \encomp(B_{t})\times \inner(B_{t})$ for some~$t = 1, \ldots, \ell$.
  Let~$s_t = |\mathcal{K}(B_{t})|$ and let~$\langle K_1,\ldots, K_{s_t} \rangle$ be the admissible clique path of $B_t$.
  Since $\langle K_1,\ldots, K_{s_t} \rangle$ is admissible for $B_t$, vertices~$u_1$ and $u_2$ are in the end cliques of $\langle K_1,\ldots, K_{s_t} \rangle$, and we have either $\lc(v_1)=t$ or $\rc(v_1)=t$ for $i=1,2$ by Claim~\eqref{eq:uv_pair}.
  \begin{enumerate}[(1)]
    \setcounter{enumi}{1} 
  \item\label{eq:B}
    If~$\lc(v_{1}) = \rc(v_{2}) = t$, then at least one of~$u_{1}$ and~$u_{2}$ is universal in~$G^K[B_{t}]$ or they are at different ends of $\langle K_1,\ldots, K_{s_t} \rangle$.
  \item \label{eq:A}
    If~$\lc(v_{1}) = \lc(v_{2}) = t$ or~$\rc(v_{1}) = \rc(v_{2}) = t$, then~$u_{1}$ and~$u_{2}$ are at the same end of $\langle K_1,\ldots, K_{s_t} \rangle$.
\end{enumerate}
Before proving the claims, we show how to construct the claimed admissible clique path for~$G^{K}[A]$ assuming their correctness.
It suffices to replace each~non-leaf node $B_t$ in $\langle B_1, \ldots, B_{\ell}\rangle$ with an admissible clique path $\langle K_1, \ldots, K_{s_t}\rangle$ for $B_t$ such that for every $(v,u) \in \mathcal{P} \cap \encomp(B_t) \times \inner(B_t)$ we have $u \in K_1$ if $\lc(v) = t$ and $u \in K_{s_t}$ if $\rc(v) = t$.
Claims~\eqref{eq:B} and~\eqref{eq:A} assert that such a clique path of $G^K[B_t]$ exists.
Eventually, Claim~\eqref{eq:uv_pair} asserts that the clique path of $G^K[A]$ we have obtained is admissible for $A$.

The rest of the proof is devoted to proving the claims.
\begin{proof}[Proof of Claim~\eqref{eq:uv_pair}]
  \renewcommand\qedsymbol{$\lrcorner$}
  Suppose for contradiction that there exists a pair $(v, u) \in \mathcal{P}$ violating Claim~\eqref{eq:uv_pair}.
  By the definition of~$\mathcal{P}$, it holds~$\lc(v) \le \lc(u) \le \rc(u) \le \lc(v)$.
  Thus,
  \[
    \lc(v) < \lc(u) \le \rc(u) < \lc(v).
  \]
Thus,~$v \not\in \inner(B_i)$ for all~$i$.
By Theorem~\ref{thm:pq-trees}, in any clique path of~$G^{K}$, the children  of~$A$ appear in the order of either~$\langle B_{1}, \ldots, B_{\ell}\rangle$ or its reversal.
Thus, $u$~cannot be in an end clique of any clique path of $G^K[A]$.
By Theorem~\ref{thm:end-interval}, $u$ is the highlighted vertex of an induced subgraph of $G^K[A]$ shown in Figure~\ref{fig:gimbel-graphs}
(reproduced and labeled in Figures~\ref{fig:l-gimbel-a}, \ref{fig:l-gimbel-b}, \ref{fig:l-gimbel-c}).

\begin{figure}[h]
  \centering \small
  \begin{subfigure}[b]{0.25\linewidth}
    \centering
    \begin{tikzpicture}[xscale=.75, yscale=0.8]
      \node[filled vertex] (u) at (0, 0) {};
      \coordinate (lu) at (0,-0.35) {};
      \node[empty vertex] (x0) at (-2, 0) {};
      \node[empty vertex] (x1) at (-1, 0) {};
      \node[empty vertex] (x3) at (1, 0) {};
      \node[empty vertex] (x4) at (2, 0) {};
      \draw (x0)--(x1)--(u)--(x3)--(x4);
    \tikzstyle{every node}=[inner sep=1pt]
    
    \begin{footnotesize}
    \node at (lu) {$u$};
    \end{footnotesize}
    \end{tikzpicture}
    \caption{}
    \label{fig:l-gimbel-a}
  \end{subfigure}
  \hspace{0.75cm}
  \begin{subfigure}[b]{0.25\linewidth}
    \centering
    \begin{tikzpicture}[xscale=.75, yscale=0.8]
      \node[filled vertex] (u) at (0, 1.25) {};
      \coordinate (lu) at (0.35,1.25) {};
      \node[empty vertex] (x0) at (-2, 0) {};
      \node[empty vertex] (x1) at (-1, 0) {};
      \node[empty vertex] (x2) at (0, 0) {};
      \node[empty vertex] (x3) at (1, 0) {};
      \node[empty vertex] (x4) at (2, 0) {};
      \draw (x0)--(x1)--(x2)--(x3)--(x4);
      \draw (x2) edge (u);
    \tikzstyle{every node}=[inner sep=1pt]
    
    \begin{footnotesize}
    \node at (lu) {$u$};
    \end{footnotesize}
    
    \draw[white] (-0.5,-0.5)--(0.5,-0.5);
    \end{tikzpicture}
    \caption{}
    \label{fig:l-gimbel-b}
  \end{subfigure}
  \hspace{0.75cm}
    \begin{subfigure}[b]{0.25\linewidth}
    \centering
    \begin{tikzpicture}[xscale=.75, yscale=0.8]
      \node[filled vertex] (u) at (1.2, 0) {};
      \coordinate (lu) at (1.2,-0.35) {};
      \node[empty vertex] (x0) at (0, 1.75) {};
      \node[empty vertex] (x1) at (0.6, .875) {};
      \node[empty vertex] (x2) at (0, 0) {};
      \node[empty vertex] (x3) at (-1.2, 0) {};
      \draw (x0)--(x1)--(x2)--(x3);
      \draw[dashed] (x2) edge (u);
      \draw (x1) -- (u);
    \tikzstyle{every node}=[inner sep=1pt]
    
    \begin{footnotesize}
    \node at (lu) {$u$};
    \end{footnotesize}
    \end{tikzpicture}
    \caption{}
    \label{fig:l-gimbel-c}
  \end{subfigure}

  \begin{subfigure}[b]{0.25\linewidth}
    \centering
    \begin{tikzpicture}[xscale=.75, yscale=0.9]
      \node[a-vertex] (v) at (0, 1) {};
      \coordinate (lv) at (0,1.35) {};
      \node[b-vertex] (u) at (0, 0) {};
      \coordinate (lu) at (0,-0.35) {};
      \node[empty vertex] (x0) at (-2, 0) {};
      \node[empty vertex] (x1) at (-1, 0) {};
      \node[empty vertex] (x3) at (1, 0) {};
      \node[empty vertex] (x4) at (2, 0) {};
      \draw (x0)--(x1)--(u)--(x3)--(x4);
      \draw (v)--(x0);
      \draw (v)--(x1);
      \draw (v)--(x3);
      \draw (v)--(x4);
      \draw[ultra thick, teal] (v) -- (u);
    \tikzstyle{every node}=[inner sep=1pt]    
    \begin{footnotesize}
    \node at (lu) {$u$};
    \node at (lv) {$v$};
    \end{footnotesize}
    \draw[white] (-2.5,-0.5)--(-2.5,0.2);
    \draw[white] (2.5,1.2)--(2.5,2.3);
     \end{tikzpicture}    
      \caption{}
    \label{fig:fs-gimbel-a}
\end{subfigure}
\hspace{0.75cm}
  \begin{subfigure}[b]{0.25\linewidth}
    \centering
    \begin{tikzpicture}[scale=.75]
      \draw (-2.,0) -- (2.,0);
      \node[a-vertex, "$v$"] (a) at (0, 1.) {};
      \node[b-vertex, "$u$"] (u) at (0.75, 1) {};
      \foreach[count=\j] \x in {-2, -1, 0, 1, 2} {
        \draw (a) -- (\x, 0) node[empty vertex] (x\j) {};
      }
      \draw (a) edge[witnessed edge] (u) (u) edge (x3);
      \node at (0, -.55) {};
    \end{tikzpicture}
      \caption{}
    \label{fig:fs-gimbel-b}
\end{subfigure}
\hspace{0.75cm}
\begin{subfigure}[b]{0.25\linewidth}
    \centering
    \begin{tikzpicture}[xscale=.75, yscale=0.9]
      \node[b-vertex] (u) at (1.2, 0) {};
      \coordinate (lu) at (1.2,-0.35) {};
      \node[a-vertex] (v) at (-0.6, 0.875) {};
      \coordinate (lv) at (-0.95,0.875) {};
      \node[empty vertex] (x0) at (0, 1.75) {};
      \node[empty vertex] (x1) at (0.6, .875) {};
      \node[empty vertex] (x2) at (0, 0) {};
      \node[empty vertex] (x3) at (-1.2, 0) {};
      \draw (x0)--(x1)--(x2)--(x3);
      \draw[dashed] (x2) edge (u);
      \draw (x1) -- (u);
      \draw (v) -- (x0);
      \draw (v) -- (x1);
      \draw (v) -- (x2);
      \draw (v) -- (x3);
      \draw [ultra thick, teal] (v) -- (u);
      
    \tikzstyle{every node}=[inner sep=1pt]
    
    \begin{footnotesize}
    \node at (lu) {$u$};
    \node at (lv) {$v$};
    \end{footnotesize}
     \end{tikzpicture}    
      \caption{}
    \label{fig:fs-gimbel-c}
\end{subfigure}
  \caption{Forbidden interval configurations obtained from Gimbel induced subgraphs.}
  \label{fig:gimbel-forbidden-configurations}
\end{figure}
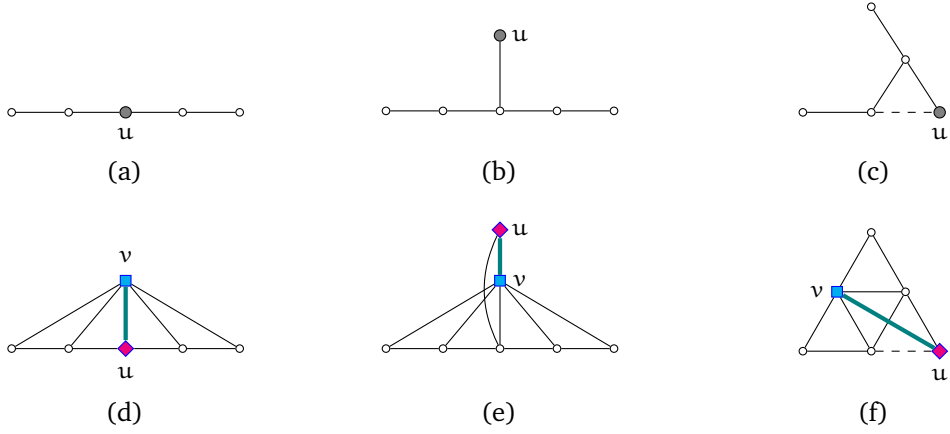

If~$v$ is universal in~$G^K[A]$ (i.e., $\lc(v) = 1$ and~$\rc(v) =  \ell$), then~$G^K$ contains Configuration~\ref{fig:p5x1-unlabeled} (reproduced and labeled in Figure~\ref{fig:fs-gimbel-a}) 
when the induced subgraph is Figure~\ref{fig:l-gimbel-a}, 
or the graph in Figure~\ref{fig:bent-whipping-top-unlabeled} (reproduced and labeled in Figure~\ref{fig:fs-gimbel-b}) when the induced subgraph is Figure~\ref{fig:l-gimbel-b},
or the graph in Figure~\ref{fig:ddag+e-unlabeled} (reproduced and labeled in Figure~\ref{fig:fs-gimbel-c}) when the induced subgraph is Figure~\ref{fig:l-gimbel-c}.
  
In the rest,~$v\not\in V_{1}\cap V_{\ell}$.
By Proposition~\ref{prop:V-sets}, at least one of 
\begin{align*}
  L =& V_{\lc(v) - 1}\cap V_{\lc(v)}\setminus V_{\rc(v)},
  \\
  R =& V_{\rc(v)}\cap V_{\rc(v) + 1}\setminus V_{\lc(v)}
\end{align*}
is nonempty.
In particular, $\lc(v) > 1$ if $L\ne\emptyset$, and~$\rc(v) < \ell$ if $R\ne\emptyset$.
Recall that, for $i = 1, \ldots, \ell$, neither $V_{i}\setminus V_{i+1}$ nor~$V_{i}\setminus V_{i-1}$ can be empty.
\begin{figure}[h]
  \centering \small
  \begin{subfigure}[b]{.3\linewidth}
    \centering
    \begin{tikzpicture}[xscale=.35]
      \foreach \l/\r/\y/\c in {4/5/2/u, 2/7/1/v}{
        \draw[{|[right]}-{|[left]}, thick, Maroon] (\l-.02, \y/4) to node[midway, yshift=3.5pt] {$\c$} (\r+.02, \y/4);
      }
      \foreach[count=\i] \right/\span/\y in {1/.5/1, 3/2.5/2}{
        \draw[-{|[left]}, thick] (\right-\span-.02, \y/4) node[left] {$x_\i$} to (\right+.02, \y/4);
      }
      \foreach[count=\i from 3] \left/\span/\y in {6/2.5/2, 8/.5/1}{
        \draw[{|[right]}-, thick] (\left-.02, \y/4) to (\left+\span+.02, \y/4) node[right] {$x_\i$};
      }
      \draw[gray] (.5, 0) -- (8.5, 0);
      \foreach \x in {1, ..., 8}
      \draw[dashed, gray] (\x, 0) -- ++(0, .1);
    \end{tikzpicture}
    \caption{}
    \label{fig:prime-Q-node-a}
  \end{subfigure}  
  \,
  \begin{subfigure}[b]{.3\linewidth}
    \centering
    \begin{tikzpicture}[xscale=.35]
      \foreach \l/\r/\y/\c in {4/6/2/u, 2/8/1/v}{
        \draw[{|[right]}-{|[left]}, thick, Maroon] (\l-.02, \y/4) to node[midway, yshift=3.5pt] {$\c$} (\r+.02, \y/4);
      }
\foreach[count=\i from 0] \l/\r/\y in {7.95/8.05/2}{
        \draw[{|[right]}-{|[left]}, thick] (\l-.02, \y/4) node[right] {$x_\i$} to (\r+.02, \y/4);
      }
      \foreach[count=\i] \right/\span/\y in {1/.5/1}{
        \draw[-{|[left]}, thick] (\right-\span-.02, \y/4) node[left] {$x_\i$} to (\right+.02, \y/4);
      }
      \foreach \l/\r/\y in {.7/4.5/3}{
        \draw[dashed] (\l-.02, \y/4) to (\r+.02, \y/4);
      }
      \draw[gray] (.5, 0) -- (8.5, 0);
      \foreach \x in {1, ..., 8}
      \draw[dashed, gray] (\x, 0) -- ++(0, .1);
    \end{tikzpicture}
    \caption{}
    \label{fig:prime-Q-node-b}
  \end{subfigure}  
  \,
  \begin{subfigure}[b]{.3\linewidth}
    \centering
    \begin{tikzpicture}[xscale=.3]
      \foreach \l/\r/\y/\c in {4/6/2/u, 2/8/1/v}{
        \draw[{|[right]}-{|[left]}, thick, Maroon] (\l-.02, \y/4) to node[midway, yshift=3.5pt] {$\c$} (\r+.02, \y/4);
      }
      \foreach[count=\i] \right/\span/\y in {1/.5/1, 2/1.5/2, 5/4.5/3} 
      \draw[-{|[left]}, thick] (\right-\span-.02, \y/4) node[left] {$x_\i$} to (\right+.02, \y/4);
      \foreach[count=\i from 4] \left/\span/\y in {8/0.5/2}
        \draw[{|[right]}-, thick] (\left-.02, \y/4) to (\left+\span+.02, \y/4) node[right] {$x_\i$};
      \draw[gray] (.5, 0) -- (8.5, 0);
      \foreach \x in {1, ..., 8}
      \draw[dashed, gray] (\x, 0) -- ++(0, .1);
    \end{tikzpicture}
    \caption{}
    \label{fig:prime-Q-node-c}
  \end{subfigure}  

  \begin{subfigure}[b]{.3\linewidth}
    \centering
    \begin{tikzpicture}[scale=.75]
      \node[a-vertex] (v) at (0, 0) {};
      \coordinate (lv) at (0,-0.3) {};
      \node[b-vertex] (u) at (0, 1) {};
      \coordinate (lu) at (0,1.3) {};
      \node[empty vertex] (x1) at (-2, 0) {};
      \coordinate (lx1) at (-2,-0.3) {};
      \node[empty vertex] (x2) at (-1, 0) {};
      \coordinate (lx2) at (-1,-0.3) {};
      \node[empty vertex] (x3) at (1, 0) {};
      \coordinate (lx3) at (1,-0.3) {};
      \node[empty vertex] (x4) at (2, 0) {};
      \coordinate (lx4) at (2,-0.3) {};
      
      \draw[ultra thick, teal] (v) -- (u) {};

      \draw (x1) -- (x2)--(v)--(x3)--(x4) {};
        \tikzstyle{every node}=[inner sep=1pt]
        \begin{footnotesize}
        \node at (lv) {$v$};
        \node at (lu) {$u$};
        \node at (lx1) {$x_1$};
        \node at (lx2) {$x_2$};
        \node at (lx3) {$x_3$};
        \node at (lx4) {$x_4$};
        \end{footnotesize}

     \end{tikzpicture}
    \caption{}
    \label{fig:fc-prime-Q-node-a}
  \end{subfigure}  
  \,
    \begin{subfigure}[b]{0.3\linewidth}
    \centering
    \begin{tikzpicture}[scale=.75]
      \node[a-vertex] (v) at (2, 1) {};
      \coordinate (lv) at (2.3,1) {};
      \node[empty vertex] (x0) at (2, 2) {};
      \coordinate (lx0) at (2.3,2) {};
      \node[empty vertex] (x1) at (0, 0) {};
      \coordinate (lx1) at (0,-0.3) {};
      \node[empty vertex] (x2) at (1, 0) {};
      \node[empty vertex] (x3) at (2, 0) {};
      \node[b-vertex] (u) at (3, 0) {};
      \coordinate (lu) at (3.3,0) {};

      \draw (x1) -- (x2)--(x3) {};
      \draw[dashed] (x3) -- (u) {};
      \draw[ultra thick, teal] (v) -- (u) {};
      \draw (v) -- (x0);
      \draw (v) -- (x2);
      \draw (v) -- (x3);
        \tikzstyle{every node}=[inner sep=1pt]
        \begin{footnotesize}
        \node at (lv) {$v$};
        \node at (lu) {$u$};
        \node at (lx0) {$x_0$};
        \node at (lx1) {$x_1$};
        \end{footnotesize}
    \end{tikzpicture}
    \caption{}
    \label{fig:fc-prime-Q-node-b}
  \end{subfigure}
  \,
    \begin{subfigure}[b]{0.3\linewidth}
    \centering
    \begin{tikzpicture}[scale=.75]
      \draw (1, 1) -- (4, 1);

       \node[empty vertex, "$x_{3}$" below] (v4) at (3, 0.2) {};
      \foreach[count=\i] \t/\v/\x in {empty /x_1/1, empty /x_2/2, a-/{\quad v}/3.5, b-/u/5} {
        \draw (v4) -- (\i, 1) node[\t vertex, "$\v$"] (u\i) {};
      }

      \draw (u3) -- ++(0, .85) node[empty vertex, "$x_{4}$"] (x2) {};
      \draw[witnessed edge] (u3) -- (u4);
     \end{tikzpicture}
    \caption{}
    \label{fig:fc-prime-Q-node-c}
  \end{subfigure}

\vspace{0.5cm}
  
  \begin{subfigure}[b]{.3\linewidth}
    \centering
    \begin{tikzpicture}[xscale=.35]
      \foreach \l/\r/\y/\c in {3/6/2/u, 1/8/1/v}{
        \draw[{|[right]}-{|[left]}, thick, Maroon] (\l-.02, \y/4) to node[midway, yshift=3.5pt] {$\c$} (\r+.02, \y/4);
      }

      \foreach[count=\i] \right/\span/\y in {2/.5/2, 4/3.5/3}{
        \draw[-{|[left]}, thick] (\right-\span-.02, \y/4) node[left] {$x_\i$} to (\right+.02, \y/4);
      }
      \foreach[count=\i from 3] \left/\span/\y in {6/2.5/3, 7/0.5/2}{
        \draw[{|[right]}-, thick] (\left-.02, \y/4) to (\left+\span+.02, \y/4) node[right] {$x_\i$};
      }
      \draw[gray] (.5, 0) -- (8.5, 0);
      \foreach \x in {1, ..., 8}
      \draw[dashed, gray] (\x, 0) -- ++(0, .1);

    \end{tikzpicture}
    \caption{}
    \label{fig:prime-Q-node-d}
  \end{subfigure}  
  \hspace{0.5cm}
  \begin{subfigure}[b]{.3\linewidth}
    \centering
    \begin{tikzpicture}[xscale=.3]
      \foreach \l/\r/\y/\c in {4/6/2/u, 2/8/1/v}{
        \draw[{|[right]}-{|[left]}, thick, Maroon] (\l-.02, \y/4) to node[midway, yshift=3.5pt] {$\c$} (\r+.02, \y/4);
      }
      \foreach[count=\i from 0] \right/\span/\y in {5/4.5/3, 1/.5/1} 
      \draw[-{|[left]}, thick] (\right-\span-.02, \y/4) node[left] {$x_\i$} to (\right+.02, \y/4);
      \foreach[count=\i from 4] \left/\span/\y in {8/.5/2}
        \draw[{|[right]}-, thick] (\left-.02, \y/4) to (\left+\span+.02, \y/4) node[right] {$x_\i$};

      \foreach[count=\i from 2] \l/\r/\y/\p in {2/3/2/left, 5.95/6.05/3/right}{
        \draw[{|[right]}-{|[left]}, thick] (\l-.02, \y/4) node[\p] {$x_\i$} to (\r+.02, \y/4);
      }

      \draw[gray] (.5, 0) -- (8.5, 0);
      \foreach \x in {1, ..., 8}
      \draw[dashed, gray] (\x, 0) -- ++(0, .1);
    \end{tikzpicture}
    \caption{}
    \label{fig:clique-path-5}
  \end{subfigure}  
  \hspace{0.5cm}
  \begin{subfigure}[b]{.3\linewidth}
    \centering
    \begin{tikzpicture}[xscale=.3]
      \foreach \l/\r/\y/\c in {2/4/2/u, 1/8/1/v}{
        \draw[{|[right]}-{|[left]}, thick, Maroon] (\l-.02, \y/4) to node[midway, yshift=3.5pt] {$\c$} (\r+.02, \y/4);
      }

      \foreach[count=\i] \l/\r/\y/\p in {0.95/1.05/2/left}{
        \draw[{|[right]}-{|[left]}, thick] (\l-.02, \y/4) node[\p] {$x_\i$} to (\r+.02, \y/4);
      }

      \foreach[count=\i from 2] \right/\span/\y in {5/4.5/3} 
      \draw[-{|[left]}, thick] (\right-\span-.02, \y/4) node[left] {$x_\i$} to (\right+.02, \y/4);
      \foreach[count=\i from 3] \left/\span/\y in {6/0.5/2}
        \draw[{|[right]}-, thick] (\left-.02, \y/4) to (\left+\span+.02, \y/4) node[right] {$x_\i$};
  
      \foreach \l/\r/\y in {3.5/6.5/4}{
        \draw[dashed] (\l-.02, \y/4) to (\r+.02, \y/4);
      }

      \draw[gray] (.5, 0) -- (8.5, 0);
      \foreach \x in {1, ..., 8}
      \draw[dashed, gray] (\x, 0) -- ++(0, .1);
    \end{tikzpicture}
    \caption{}
    \label{fig:prime-Q-node-f}
  \end{subfigure}  
  
  \begin{subfigure}[b]{0.3\linewidth}
    \centering
    \begin{tikzpicture}[scale=.75]
      \node[a-vertex] (v) at (0, 1.) {};
      \coordinate (lv) at (0,1.3) {};
      \node[b-vertex] (u) at (0, 0) {};
      \coordinate (lu) at (0,-0.3) {};
      \node[empty vertex] (x1) at (-2, 0) {};
      \coordinate (lx1) at (-2,-0.3) {};
      \node[empty vertex] (x2) at (-1, 0) {};
      \coordinate (lx2) at (-1,-0.3) {};
      \node[empty vertex] (x3) at (1, 0) {};
      \coordinate (lx3) at (1,-0.3) {};
      \node[empty vertex] (x4) at (2, 0) {};
      \coordinate (lx4) at (2,-0.3) {};
      
      \draw[ultra thick, teal] (v) -- (u) {};
      \draw (v) -- (x1) {};
      \draw (v) -- (x2) {};
      \draw (v) -- (x3) {};
      \draw (v) -- (x4) {};

      \draw (x1) -- (x2)--(u)--(x3)--(x4) {};
        \tikzstyle{every node}=[inner sep=1pt]
        \begin{footnotesize}
        \node at (lv) {$v$};
        \node at (lu) {$u$};
        \node at (lx1) {$x_1$};
        \node at (lx2) {$x_2$};
        \node at (lx3) {$x_3$};
        \node at (lx4) {$x_4$};
        \end{footnotesize}
     \end{tikzpicture}
    \caption{}
    \label{fig:fc-prime-Q-node-d}
  \end{subfigure}
  \,
      \begin{subfigure}[b]{0.17\linewidth}
    \centering
    \begin{tikzpicture}[scale=.75]
      \node[empty vertex] (x1) at (0,0) {};
      \coordinate (lx1) at (0,-0.3) {};
      \node[a-vertex] (x0) at (1, 0) {};
      \coordinate (lx0) at (1,-0.3) {};
      \node[a-vertex] (v) at (2, 0) {};
      \coordinate (lv) at (2,-0.3) {};
      \node[empty vertex] (x4) at (3,0) {};
      \coordinate (lx4) at (3,-0.3) {};
      \node[b-vertex] (u) at (1, 1) {};
      \coordinate (lu) at (1,1.3) {};
      \node[empty vertex] (x2) at (2, 1) {};
      \coordinate (lx2) at (2,1.3) {};
      \draw (x1)--(x0)--(v)--(x4);
      \draw[ultra thick, teal] (x0)--(u);
      \draw[ultra thick, teal] (v)--(u);
      \draw (x0)--(x2);
      \draw (v)--(x2);
        \tikzstyle{every node}=[inner sep=1pt]
        \begin{footnotesize}
        \node at (lv) {$v$};
        \node at (lu) {$u$};
        \node at (lx0) {$x_0$};
        \node at (lx1) {$x_1$};
        \node at (lx2) {$x_2$};
        \node at (lx4) {$x_4$};
        \end{footnotesize}
    \end{tikzpicture}
    \caption{}
    \label{fig:fc-prime-Q-node-e1}
  \end{subfigure}
    \begin{subfigure}[b]{0.17\linewidth}
    \centering
    \begin{tikzpicture}[scale=.75]
      \node[empty vertex] (x2) at (1.0,0) {};
      \coordinate (lx2) at (1,-0.3) {};
      \node[empty vertex] (x3) at (4.0,0) {};
      \coordinate (lx3) at (4,-0.3) {};
      \node[b-vertex] (x0) at (2, 0) {};
      \coordinate (lx0) at (2,-0.3) {};
      \node[b-vertex] (u) at (3, 0) {};
      \coordinate (lu) at (3,-0.3) {};
      \node[a-vertex] (v) at (2.5, 1) {};
      \coordinate (lv) at (2.8,1) {};
      \node[empty vertex] (x4) at (2.5,1.75) {};
      \coordinate (lx4) at (2.8,1.75) {};
      \draw (v) -- (x4)  {};
      \draw[ultra thick, teal] (v) -- (x0);
      \draw[ultra thick, teal] (v) -- (u);
      \draw (v) -- (x2) {};
      \draw (v) -- (x3) {};
      \draw (x2) -- (x0) -- (u) --(x3) {};
        \tikzstyle{every node}=[inner sep=1pt]
        \begin{footnotesize}
        \node at (lv) {$v$};
        \node at (lu) {$u$};
        \node at (lx0) {$x_0$};
        \node at (lx2) {$x_2$};
        \node at (lx4) {$x_4$};
        \node at (lx3) {$x_3$};
        \end{footnotesize}
    \end{tikzpicture}
    \caption{}
    \label{fig:fc-prime-Q-node-e2}
  \end{subfigure}
  \,
  \begin{subfigure}[b]{0.3\linewidth}
  \centering
    \begin{tikzpicture}[xscale=.75, yscale=0.75]
      \node[b-vertex] (u) at (1.2, 0) {};
      \coordinate (lu) at (1.2,-0.3) {};
      \node[a-vertex] (v) at (-0.6, 0.875) {};
      \coordinate (lv) at (-0.95,0.875) {};
      \node[empty vertex] (x1) at (0, 1.75) {};
      \coordinate (lx1) at (0.3,1.75) {};
      \node[empty vertex] (x2) at (0.6, .875) {};
      \coordinate (lx2) at (0.9,0.875) {};
      \node[empty vertex] (x4) at (0, 0) {};
      \node[empty vertex] (x3) at (-1.2, 0) {};
      \coordinate (lx3) at (-1.2, -0.3) {};
      \draw (x3)--(x4);
      \draw[dashed] (x4) edge (u);
      \draw (x2) -- (u);
      \draw (x1) -- (x2);
      \draw (x2) -- (x4);
      \draw (v) -- (x1);
      \draw (v) -- (x2);
      \draw (v) -- (x4);
      \draw (v) -- (x3);
      \draw [ultra thick, teal] (v) -- (u);
      
    \tikzstyle{every node}=[inner sep=1pt]
    
    \begin{footnotesize}
    \node at (lu) {$u$};
    \node at (lv) {$v$};
    \node at (lx1) {$x_1$};
    \node at (lx2) {$x_2$};
    \node at (lx3) {$x_3$};
    \end{footnotesize}
     \end{tikzpicture}    
    \caption{}
    \label{fig:fc-prime-Q-node-f}
  \end{subfigure}

  \caption{Forbidden configurations encountered when Claim~\eqref{eq:uv_pair} is not satisfied.}
  \label{fig:fs-prime-Q-node}
\end{figure}
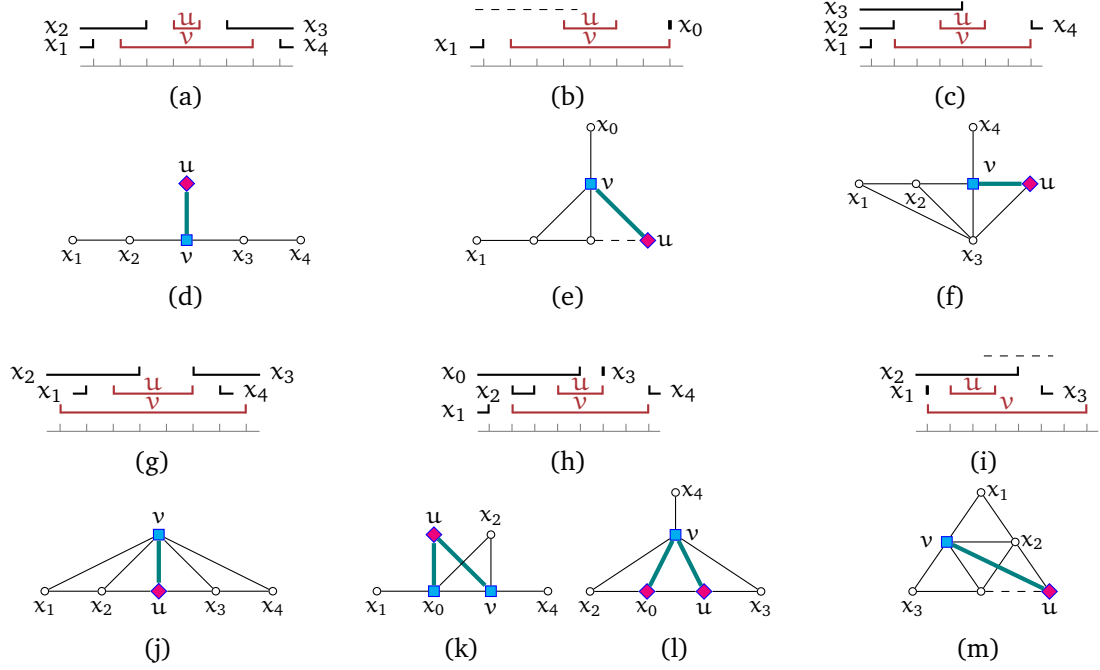

  Case 1, $L\cup R$ is disjoint from~$N(u)$.
If neither $L$ nor~$R$ is empty (see Figure~\ref{fig:prime-Q-node-a}), then $G^K$ contains Configuration~\ref{fig:p5+1-unlabeled} (reproduced and labeled in Figure~\ref{fig:fc-prime-Q-node-a}), where
    $x_{1}\in V_{\lc(v) - 1}\setminus V_{\lc(v)}$, $x_{2}\in L$,
  and
    $x_{3}\in R$, $x_{4}\in V_{\rc(v) + 1}\setminus V_{\rc(v)}$.
    Hence, we assume without loss of generality that $L \ne\emptyset$ and~$R=\emptyset$.
    We argue that $G^K$ contains Configuration~\ref{fig:dag+e-unlabeled} (reproduced and labeled in Figure~\ref{fig:fc-prime-Q-node-b}).
   We take a vertex~$x_{0}\in V_{\rc(v)}\setminus V_{\rc(v) - 1}$ and a vertex~$x_{1}\in V_{\lc(v) - 1}\setminus V_{\lc(v)}$; see Figure~\ref{fig:prime-Q-node-b}.
   By Proposition~\ref{prop:V-sets}, $V_{i-1}\cap V_{i}\setminus V_{\rc(v)}\ne \emptyset$ for all~$i = \lc(v), \ldots, \lc(u)$.
   Thus, $x_{1}$ and~$u$ are in the same component of~$G^K[A] - V_{\rc(v)}$.
   We find a shortest \stpath{x_{1}}{u} in~$G^K[A] - V_{\rc(v)}$.
   Note that the length of this path is at least two (because $L\cap N(u) = \emptyset$), 
   and its internal vertices are adjacent to~$v$ but not $x_{0}$.

  Case 2, $u$ has at least one neighbour in~$L\cup R$.
  We may assume without loss of generality that $L\cap N(u)\ne\emptyset$; it is symmetric if $R\cap N(u)\ne\emptyset$.
  If $L \not\subseteq N(u)$ (see Figure \ref{fig:prime-Q-node-c}), then $G^K$ contains Configuration~\ref{fig:whipping-top-1-unlabeled} (reproduced and labeled in Figure~\ref{fig:fc-prime-Q-node-c}), where $x_{1}\in V_{\lc(v) - 1}\setminus V_{\lc(v)}$, $x_{2}\in L \setminus N(u)$, $x_{3}\in L\cap N(u)$, and~$x_{4}\in V_{\rc(v)}\setminus V_{\rc(v) - 1}$.
Henceforth, $L$ is a nonempty subset of~$N(u)$.

  \begin{itemize}
  \item 
  Case 2.1, $L\not\subseteq V_{\rc(u)}$.  If $V_{\rc(u)}\cap V_{\rc(u)+1}\not\subseteq V_{\rc(u) - 1}$ (see Figure~\ref{fig:prime-Q-node-d}),
  then $G^K$ contains Configuration~\ref{fig:p5x1-unlabeled} (reproduced and labeled in Figure~\ref{fig:fc-prime-Q-node-d}), where $x_{1}\in V_{\lc(u)-1}\setminus V_{\lc(u)}$, $x_{2}\in L\setminus V_{\rc(u)}$, $x_{3}\in V_{\rc(u)}\cap V_{\rc(u)+1}\setminus V_{\rc(u) - 1}$, and~$x_{4}\in V_{\rc(u)+1}\setminus V_{\rc(u)}$.
Otherwise, we find a vertex~$x_{1}\in V_{\lc(v) - 1}\setminus V_{\lc(v)}$, a vertex~$x_{2}\in V_{\lc(u) - 1}\setminus V_{\lc(u)}$, a vertex~$x_{3}\in V_{\rc(u)}\setminus V_{\rc(u) - 1}$, and a vertex~$x_{4}\in V_{\rc(v)}\setminus V_{\rc(v) - 1}$; see Figure~\ref{fig:clique-path-5}.
    Since $L\subseteq N(u)$, the vertex~$x_{2}$ cannot be in~$V_{\lc(v) - 1}$; since
    $V_{\rc(u)}\cap V_{\rc(u)+1}\setminus V_{\rc(u) - 1}= \emptyset$,
    the vertex~$x_{3}$ cannot be in~$V_{\rc(u) + 1}$.
Note that $x_0u \in E(G)$ if $x_0 \in K$  (otherwise we have $N_{G^K}(u) \subseteq N_{G^K}(x_0)$, which is not the case) 
    and $vx_0 \in E(G)$ if $x_0 \notin K$ (otherwise we have $N_{G^K}(x_0) \subseteq N_{G^K}(v)$, which is not the case).
    Hence, $G^K[A]$ contains Configuration~\ref{fig:ab-wheel-unlabeled} (reproduced and labeled in Figure~\ref{fig:fc-prime-Q-node-e1})
    if $x_{0}\in K$, or Figure~\ref{fig:(p4+p1)*1-unlabeled} (reproduced and labeled in Figure \ref{fig:fc-prime-Q-node-e2}) otherwise, 
    with vertex sets~$\{u, v, x_{0}, x_{1}, x_{2}, x_{4}\}$ and~$\{u, v, x_{0}, x_{2}, x_{3}, x_{4}\}$, respectively.

  \item Case 2.2, $L\subseteq V_{\rc(u)}$.
    We argue that $G^K$ contains Configuration~\ref{fig:ddag+2e-unlabeled} (reproduced and labeled in Figure~\ref{fig:fc-prime-Q-node-f}).
    We take a vertex~$x_{1}\in V_{\lc(v)}\setminus V_{\lc(v) + 1}$, a vertex~$x_{2}$ from~$L$ such that $\rc(x_{2})$ is minimized, and a vertex~$x_{3}\in V_{\rc(x_{2}) + 1}\setminus V_{\rc(x_{2})}$. 
    See Figure~\ref{fig:prime-Q-node-f}.
By Proposition~\ref{prop:V-sets}, $V_{i}\cap V_{i+1}\setminus V_{\lc(v)}\ne \emptyset$ for all~$i = \rc(u), \rc(u)+1, \ldots, \rc(x_{2})$.
   Thus, $u$ and~$x_{3}$ are in the same component of~$G^K[A] - V_{\lc(v)}$.
   We find a shortest \stpath{u}{x_{3}} in~$G^K[A] - V_{\lc(v)}$.
   Note that all internal vertices of this path, which is nontrivial because $x_{3}u\not\in E(G^K)$, are adjacent to~$x_{2}$ but not~$x_{1}$.\qedhere
  \end{itemize}
\end{proof}

  \begin{figure}[h]
  \centering
  \begin{subfigure}[t]{0.5\linewidth}
    \centering
    \begin{tikzpicture}[xscale=1.25,yscale=0.7,>=latex,shorten >=-0.4pt,shorten <=-0.4pt]
      
      \draw[region](.8, 0.5) rectangle (2.2, 2);
      \foreach \i in {1, 2} {
        \draw[region](2.5*\i - 2.5, .5) rectangle (2.5 * \i - 2., 2);
        \draw[{|[right]}-{|[left]}] (2.5*\i - 2.4, 1.) -- node[midway, above] {$x_{\i}$}(2.5*\i - 2.2, 1.);
      }
      \draw[{|[right]}-{|[left]}] (.8, 1.3) -- node[midway, above]{$u_{1}$} (1.3, 1.3);
      \draw[{|[right]}-{|[left]}] (.8, 1.) -- node[midway, below]{$u_{2}$} (1.5, 1.);
      \draw[{|[right]}-{|[left]}] (1.8, 1.3) -- node[midway, above]{$x_{0}$} (2.2, 1.3);
      \node at (1, 0.2) {$B_{t}$};
      
      \draw[{|[right]}-] (0.8, 3. - 1/3) node[left] {$v_{1}$} -- (3.2, 3. - 1/3);
      \draw[-{|[left]}] (0, 3. - 2/3) -- (2.2, 3. - 2/3) node[right] {$v_{2}$} ;
    \end{tikzpicture}
    \caption{}
    \label{fig:Q-node-case-B-u1u2}
  \end{subfigure}

  \begin{subfigure}[b]{0.22\linewidth}
    \centering
    \begin{tikzpicture}[scale=.75]
      \node[empty vertex] (x1) at (0,0) {};
      \coordinate (lx1) at (0,-0.3) {};
      \node[a-vertex] (x0) at (1, 0) {};
      \coordinate (lx0) at (1,-0.3) {};
      \node[a-vertex] (v) at (2, 0) {};
      \coordinate (lv) at (2,-0.3) {};
      \node[empty vertex] (x4) at (3,0) {};
      \coordinate (lx4) at (3,-0.3) {};
      \node[b-vertex] (u) at (1, 1) {};
      \coordinate (lu) at (1,1.3) {};
      \node[empty vertex] (x2) at (2, 1) {};
      \coordinate (lx2) at (2,1.3) {};
      \draw (x1)--(x0)--(v)--(x4);
      \draw[ultra thick, teal] (x0)--(u);
      \draw[ultra thick, teal] (v)--(u);
      \draw (x0)--(x2);
      \draw (v)--(x2);
      \tikzstyle{every node}=[inner sep=1pt]
      \begin{footnotesize}
        \node at (lv) {$v_2$};
        \node at (lu) {$u_1$};
        \node at (lx0) {$v_1$};
        \node at (lx1) {$x_1$};
        \node at (lx2) {$x_0$};
        \node at (lx4) {$x_2$};
      \end{footnotesize}
    \end{tikzpicture}
    \caption{}
    \label{fig:fs-Q-node-case-B-u}
  \end{subfigure}
  \hspace{1cm}
  \begin{subfigure}[b]{0.22\linewidth}
    \centering
    \begin{tikzpicture}[scale=.85]
      \node[empty vertex] (x1) at (0,0) {};
      \coordinate (lx1) at (0,-0.3) {};
      \node[a-vertex] (v1) at (1, 0) {};
      \coordinate (lv1) at (1,-0.3) {};
      \node[a-vertex] (v2) at (2, 0) {};
      \coordinate (lv2) at (2,-0.3) {};
      \node[empty vertex] (x2) at (3,0) {};
      \coordinate (lx2) at (3,-0.3) {};
      \node[b-vertex] (u1) at (0.5, 1) {};
      \coordinate (lu1) at (0.5,1.3) {};
      \node[b-vertex] (u2) at (1.5, 1) {};
      \coordinate (lu2) at (1.5,1.3) {};
      \node[empty vertex] (x0) at (2.5, 1) {};
      \coordinate (lx0) at (2.5,1.3) {};
      \draw (x1)--(v1)--(v2)--(x2);
      \draw[ultra thick, teal] (v1)--(u1);
      \draw[ultra thick, teal] (v2)--(u2);
      \draw (v1)--(u2);
      \draw (v2)--(u1);
      \draw (u1)--(u2);
      \draw (v1)--(x0);
      \draw (v2)--(x0);
      \tikzstyle{every node}=[inner sep=1pt]
      \begin{footnotesize}
        \node at (lv1) {$v_1$};
        \node at (lv2) {$v_2$};
        \node at (lu1) {$u_1$};
        \node at (lu2) {$u_2$};
        \node at (lx1) {$x_1$};
        \node at (lx2) {$x_2$};
        \node at (lx0) {$x_0$};
      \end{footnotesize}
\end{tikzpicture}
    \caption{}
    \label{fig:fs-Q-node-case-B-u1u2}
  \end{subfigure}

  \caption{Forbidden configurations encountered when Claim~\eqref{eq:B} is violated.}
  \label{fig:Q-node-cases} 
\end{figure}

\begin{proof}[Proof of Claim~\eqref{eq:B}]
  \renewcommand\qedsymbol{$\lrcorner$}
  Suppose for contradiction that $u_{1}, u_{2}\in K_{1}\setminus K_{s_t}$, where~$u_{1}$ and~$u_{2}$ might be the same (see Figure~\ref{fig:Q-node-case-B-u1u2}).
  The other case is handled similarly.
Note that~$1 < t < \ell$.
  We take a vertex~$x_{0} \in K_{s_t}\setminus K_{s_t - 1}$ from the other end.
  Note that $x_0u_i \notin E(G^K)$ for $i=1,2$.
  We take a vertex~$x_1 \in V_{t-1} \setminus V_t$ and a vertex~$x_2 \in V_{t+1} \setminus V_t$.
If~$u_1 = u_2$, then the subgraph~$G^{K}[\{x_1,v_1,v_2,x_2,u_1,x_0\}]$ is Configuration~\ref{fig:ab-wheel-unlabeled} (reproduced and labeled in Figure~\ref{fig:fs-Q-node-case-B-u}).
  We are in the previous case if~$(u_{1}, v_{2})$ or~$(u_{2}, v_{1})$ is in~$\mathcal{P}$.
  Otherwise, the subgraph~$G^{K}[\{x_1,v_1,v_2,x_2,u_1,u_2,x_0\}]$ is Configuration~\ref{fig:add-1-unlabeled} (reproduced and labeled in Figure~\ref{fig:fs-Q-node-case-B-u1u2}).
\end{proof}

\begin{figure}[h]
  \centering
  \begin{subfigure}[t]{0.47\linewidth}
    \centering
    \begin{tikzpicture}[xscale=1.25,yscale=0.7,>=latex,shorten >=-0.4pt,shorten <=-0.4pt, label distance=-4pt]
      \draw[region](0, 0.5) rectangle (2, 2);
      \draw[region](2.3, 0.5) rectangle (3, 2);

      \foreach \i/\p in {1/above, 2/below} {
\pgfmathsetmacro{\y}{1.2 + Mod(\i, 2)/5}        
        \draw[{|[right]}-{|[left]}] (\i*1.5 - 1.5, \y) -- node[midway, "$x_{\i}$" \p] {} ({\i*1.5- 1}, \y);
\draw[{|[left]}-{|[right]}] (2.7 - \i*.7, \y) -- node[midway, "$u_{\the\numexpr3-\i\relax}$" \p] {} (1.4 - \i*.7, \y);
}
      \draw[{|[right]}-{|[left]}] (2.5, 1.3) --node[midway, above] {$x_{0}$} (2.8, 1.3);
      \node at (1, 0.2) {$B_{t}$};
      \node at (3, 0.2) {$B_{t+1}$};
      
      \draw[{|[right]}-{|[left]}] (0, 3. - 1/3) node[left] {$v_{1}$} -- (3, 3. - 1/3);
      \draw[{|[right]}-] (0, 3. - 2/3) node[left] {$v_{2}$} -- (3, 3. - 2/3);
      \draw[dotted] (3, 3. - 2/3) -- ++(.3, 0);
\end{tikzpicture}
    \caption{}
    \label{fig:Q-node-case-A-d}
  \end{subfigure}
  \,
  \begin{subfigure}[t]{0.47\linewidth}
    \centering
    \begin{tikzpicture}[xscale=1.25,yscale=0.7,>=latex,shorten >=-0.4pt,shorten <=-0.4pt]
      \draw[region](0, 0.5) rectangle (1.5, 2);
\foreach \i in {1, 2} {
        \draw[region](.8*\i+1.4, 0.5) rectangle (.8*\i + 1.9, 2);
        \draw[{|[right]}-{|[left]}] (\i*1. - 1., 1.3) node[above right] {$u_{\i}$} -- ({\i*1.- .5}, 1.3);
      }
      \draw[{|[right]}-{|[left]}] (3.1, 1.3) node[above right] {$x_{0}$} -- (3.4, 1.3);
      \node at (1, 0.2) {$B_{t}$};
      \node at (3.2, 0.2) {$B_{r}$};
      
      \draw[{|[right]}-{|[left]}] (0, 3.) node[left] {$v_{1}$} -- (3.5, 3.);
      \draw[{|[right]}-] (0, 3. - 1/3) node[left] {$v_{2}$} -- (3.5, 3. - 1/3);
      \draw[dotted] (3.5, 3. - 1/3) -- ++(.3, 0);
      \draw[-{|[left]}] (0, 3. - 2/3) -- (2.7, 3. - 2/3) node[right] {$x_{1}$};
    \end{tikzpicture}
    \caption{}
    \label{fig:Q-node-case-A-a}
  \end{subfigure}

  \begin{subfigure}[b]{0.22\linewidth}
    \centering
    \begin{tikzpicture}[scale=.85]
      \node[empty vertex] (x1) at (1.0,0) {};
      \coordinate (lx1) at (1,-0.4) {};
      \node[empty vertex] (x2) at (4.0,0) {};
      \coordinate (lx2) at (4,-0.4) {};
      \node[b-vertex] (u1) at (2, 0) {};
      \coordinate (lu1) at (2,-0.4) {};
      \node[b-vertex] (u2) at (3, 0) {};
      \coordinate (lu2) at (3,-0.4) {};
      \node[a-vertex] (v) at (2.5, 1) {};
      \coordinate (lv) at (2.9,1) {};
      \node[empty vertex] (x0) at (2.5,1.75) {};
      \coordinate (lx0) at (2.9,1.75) {};
      \draw (v) -- (x0)  {};
      \draw[ultra thick, teal] (v) -- (u1);
      \draw[ultra thick, teal] (v) -- (u2);
      \draw (v) -- (x1) {};
      \draw (v) -- (x2) {};
      \draw (x1) -- (u1) -- (u2) --(x2) {};
      \tikzstyle{every node}=[inner sep=1pt]
      \begin{footnotesize}
        \node at (lv) {$v_1$};
        \node at (lu1) {$u_1$};
        \node at (lu2) {$u_2$};
        \node at (lx0) {$x_0$};
        \node at (lx1) {$x_1$};
        \node at (lx2) {$x_2$};
      \end{footnotesize}
    \end{tikzpicture}
    \caption{}
    \label{fig:fs-Q-node-case-B-c}
  \end{subfigure}
  \begin{subfigure}[b]{0.22\linewidth}
    \centering
    \begin{tikzpicture}[scale=.85]
      \node[empty vertex] (x1) at (1.0,0) {};
      \coordinate (lx1) at (1,-0.4) {};
      \node[empty vertex] (x2) at (4.0,0) {};
      \coordinate (lx2) at (4,-0.4) {};
      \node[b-vertex] (u1) at (2, 0) {};
      \coordinate (lu1) at (2,-0.4) {};
      \node[b-vertex] (u2) at (3, 0) {};
      \coordinate (lu2) at (3,-0.4) {};
      \node[a-vertex] (v1) at (2, 1) {};
      \coordinate (lv1) at (1.6,1) {};
      \node[a-vertex] (v2) at (3, 1) {};
      \coordinate (lv2) at (3.4,1) {};
      \node[empty vertex] (x0) at (2.5,1.75) {};
      \coordinate (lx0) at (2.9,1.75) {};
      \draw (v1) -- (x0)  {};
      \draw (v1) -- (v2)  {};
      \draw (v2) -- (x0)  {};
      \draw[ultra thick, teal] (v1) -- (u1);
      \draw[ultra thick, teal] (v2) -- (u2);
      \draw (v1) -- (x1) {};
      \draw (v1) -- (x2) {};
      \draw (v1) -- (u2) {};
      \draw (v2) -- (u1) {};
      \draw (v2) -- (x1) {};
      \draw (v2) -- (x2) {};
      \draw (x1) -- (u1) -- (u2) --(x2) {};
      \tikzstyle{every node}=[inner sep=1pt]
      \begin{footnotesize}
        \node at (lv1) {$v_1$};
        \node at (lv2) {$v_2$};
        \node at (lu1) {$u_1$};
        \node at (lu2) {$u_2$};
        \node at (lx0) {$x_0$};
        \node at (lx1) {$x_1$};
        \node at (lx2) {$x_2$};
      \end{footnotesize}
    \end{tikzpicture}
    \caption{}
    \label{fig:fs-Q-node-case-A-d}
  \end{subfigure}
  \quad
  \quad
  \begin{subfigure}[b]{0.2\linewidth}
    \centering
    \begin{tikzpicture}[scale=0.85]
      \node[empty vertex] (x1) at (0,0) {};
      \coordinate (lx1) at (0,-0.3) {};
      \node[b-vertex] (u1) at (-1.2, 0) {};
      \coordinate (lu1) at (-1.2,-0.3) {};
      \node[b-vertex] (u2) at (1.2, 0) {};
      \coordinate (lu2) at (1.2,-0.3) {};
      \node[a-vertex] (v) at (0, 1) {};
      \coordinate (lv) at (0.3,1) {};
      \node[empty vertex] (x0) at (0,2) {};
      \coordinate (lx0) at (0.3,2) {};
      \draw (v) -- (x0)  {};
      \draw[ultra thick, teal] (v) -- (u1);
      \draw[ultra thick, teal] (v) -- (u2);
      \draw (v) -- (x1) {};
      \draw (u1) -- (x1) -- (u2) {};
      \tikzstyle{every node}=[inner sep=1pt]
      \begin{footnotesize}
        \node at (lv) {$v_1$};
        \node at (lu1) {$u_1$};
        \node at (lu2) {$u_2$};
        \node at (lx1) {$x_1$};
        \node at (lx0) {$x_0$};
      \end{footnotesize}
    \end{tikzpicture}
    \caption{}
    \label{fig:fs-Q-node-case-B-a}
  \end{subfigure}
  \begin{subfigure}[b]{0.2\linewidth}
    \centering
    \begin{tikzpicture}[scale=0.85]
      \node[empty vertex] (x1) at (0,0) {};
      \coordinate (lx1) at (0,-0.3) {};
      \node[b-vertex] (u1) at (-1.2, 0) {};
      \coordinate (lu1) at (-1.2,-0.3) {};
      \node[b-vertex] (u2) at (1.2, 0) {};
      \coordinate (lu2) at (1.2,-0.3) {};
      \node[a-vertex] (v1) at (-0.6, 1) {};
      \coordinate (lv1) at (-0.9,1) {};
      \node[a-vertex] (v2) at (0.6, 1) {};
      \coordinate (lv2) at (0.9,1) {};
      \node[empty vertex] (x0) at (0,2) {};
      \coordinate (lx0) at (0.3,2) {};
      \draw (v1) -- (x0)  {};
      \draw (v2) -- (x0)  {};
      \draw (v1) -- (v2)  {};
      \draw[ultra thick, teal] (v1) -- (u1);
      \draw[ultra thick, teal] (v2) -- (u2);
      \draw (v1) -- (x1) {};
      \draw (v1) -- (u2) {};
      \draw (v2) -- (u1) {};
      \draw (v2) -- (x1) {};
      \draw (u1) -- (x1) -- (u2) {};
      \tikzstyle{every node}=[inner sep=1pt]
      \begin{footnotesize}
        \node at (lv1) {$v_1$};
        \node at (lv2) {$v_2$};
        \node at (lu1) {$u_1$};
        \node at (lu2) {$u_2$};
        \node at (lx1) {$x_1$};
        \node at (lx0) {$x_0$};
      \end{footnotesize}
    \end{tikzpicture}
    \caption{}
    \label{fig:fs-Q-node-case-A-a}
  \end{subfigure}

  \begin{subfigure}[t]{0.8\linewidth}
    \centering
    \begin{tikzpicture}[xscale=1.25,yscale=0.7,>=latex,shorten >=-0.4pt,shorten <=-0.4pt]
      \draw[region](0, 0.5) rectangle (1.5, 2);
\foreach \i in {1, 2, 3} {
        \draw[region](.8*\i+1.4, 0.5) rectangle (.8*\i + 1.9, 2);
      }        
        \foreach \i in {1, 2} {
          \draw[{|[right]}-{|[left]}] (\i*1. - 1., 1.3) node[above right] {$u_{\i}$} -- ({\i*1.- .5}, 1.3);
      }
      \node at (3.25, 0.2) {$B_{r}$};
      \node at (4.25, 0.2) {$B_{r+1}$};
\node at (1, 0.2) {$B_{t}$};
      
      \draw[{|[right]}-{|[left]}] (0, 3.) node[left] {$v_{1}$} -- (3.5, 3.);
      \draw[{|[right]}-] (0, 3. - 1/3) node[left] {$v_{2}$} -- (3.5, 3. - 1/3);
      \draw[dotted] (3.5, 3. - 1/3) -- ++(.5, 0);
      \draw[{|[right]}-] (2.2, 3. - 2/3) node[left] {$x_{2}$} -- (4.3, 3. - 2/3);
      \draw[{|[right]}-{|[left]}] (3.9, 1.3) node[above right] {$x_{1}$} -- (4.3, 1.3);
    \end{tikzpicture}
    \caption{}
    \label{fig:Q-node-case-A-c}
  \end{subfigure}

  \begin{subfigure}[b]{0.22\linewidth}
    \centering
    \begin{tikzpicture}[scale=.85]
      \node[empty vertex] (x2) at (-2,0) {};
      \coordinate (lx2) at (-2,0.3) {};
      \node[empty vertex] (x1) at (-1,0) {};
      \coordinate (lx1) at (-1,0.3) {};
      \node[a-vertex] (v) at (0,0) {};
      \coordinate (lv) at (0,0.3) {};
      \node[b-vertex] (u1) at (1,0.5) {};
      \coordinate (lu1) at (1,0.8) {};
      \node[b-vertex] (u2) at (1,-0.5) {};
      \coordinate (lu2) at (1,-0.85) {};
      \draw[ultra thick, teal] (v) -- (u1) {};
      \draw[ultra thick, teal] (v) -- (u2) {};
      \draw (x2)--(x1);    
      \draw (x1)--(v);    
      \tikzstyle{every node}=[inner sep=1pt]
      \begin{footnotesize}
        \node at (lv) {$v_1$};
        \node at (lu1) {$u_1$};
        \node at (lu2) {$u_2$};
        \node at (lx1) {$x_2$};
        \node at (lx2) {$x_1$};
      \end{footnotesize}
\end{tikzpicture}
    \caption{}
    \label{fig:fs-Q-node-case-B-b}
  \end{subfigure}
  \begin{subfigure}[b]{0.22\linewidth}
    \centering
    \begin{tikzpicture}[scale=.85]
      \node[empty vertex] (x2) at (-2,0) {};
      \coordinate (lx2) at (-2,0.3) {};
      \node[empty vertex] (x1) at (-1,0) {};
      \coordinate (lx1) at (-1,0.3) {};
      \node[a-vertex] (v1) at (0,0.5) {};
      \coordinate (lv1) at (0,0.8) {};
      \node[a-vertex] (v2) at (0,-0.5) {};
      \coordinate (lv2) at (0,-0.85) {};
      \node[b-vertex] (u1) at (1,0.5) {};
      \coordinate (lu1) at (1,0.8) {};
      \node[b-vertex] (u2) at (1,-0.5) {};
      \coordinate (lu2) at (1,-0.85) {};
      \draw[ultra thick, teal] (v1) -- (u1) {};
      \draw[ultra thick, teal] (v2) -- (u2) {};
      \draw (x2)--(x1);    
      \draw (x1)--(v1);    
      \draw (x1)--(v2);    
      \draw (v1)--(v2);    
      \draw (v1) -- (u2) {};
      \uncertain{u1}{v2};
      \tikzstyle{every node}=[inner sep=1pt]
      \begin{footnotesize}
        \node at (lv1) {$v_1$};
        \node at (lv2) {$v_2$};
        \node at (lu1) {$u_1$};
        \node at (lu2) {$u_2$};
        \node at (lx1) {$x_2$};
        \node at (lx2) {$x_1$};
      \end{footnotesize}
\end{tikzpicture}
    \caption{}
    \label{fig:fs-Q-node-case-A-b}
  \end{subfigure}
  \,
  \begin{subfigure}[b]{0.22\linewidth}
    \centering
    \begin{tikzpicture}[scale=.85]
      \node[empty vertex] (x2) at (-2,0) {};
      \coordinate (lx2) at (-2,0.3) {};
      \node[empty vertex] (x1) at (-1,0) {};
      \coordinate (lx1) at (-1,0.3) {};
      \node[a-vertex] (v1) at (0,0.5) {};
      \coordinate (lv1) at (0,0.8) {};
      \node[a-vertex] (v2) at (0,-0.5) {};
      \coordinate (lv2) at (0,-0.85) {};
      \node[b-vertex] (u1) at (1,0.5) {};
      \coordinate (lu1) at (1,0.8) {};
      \node[b-vertex] (u2) at (1,-0.5) {};
      \coordinate (lu2) at (1,-0.85) {};
      \draw[ultra thick, teal] (v1) -- (u1) {};
      \draw[ultra thick, teal] (v2) -- (u2) {};
      \draw (x2)--(x1);    
      \draw (x1)--(v1);    
      \draw (x1)--(v2);    
      \draw (v1)--(v2);    
      \draw (v1) -- (u2) {};
      \uncertain{u1}{v2};
      \draw (v2) -- (x2) {};
      \tikzstyle{every node}=[inner sep=1pt]
      \begin{footnotesize}
        \node at (lv1) {$v_1$};
        \node at (lv2) {$v_2$};
        \node at (lu1) {$u_1$};
        \node at (lu2) {$u_2$};
        \node at (lx1) {$x_2$};
        \node at (lx2) {$x_1$};
      \end{footnotesize}
\end{tikzpicture}
    \caption{}
    \label{fig:fs-Q-node-case-A-c}
  \end{subfigure}

  \caption{Forbidden configurations encountered when Claim~\eqref{eq:A} is violated.}
  \label{fig:Q-node-case-A} 
\end{figure}
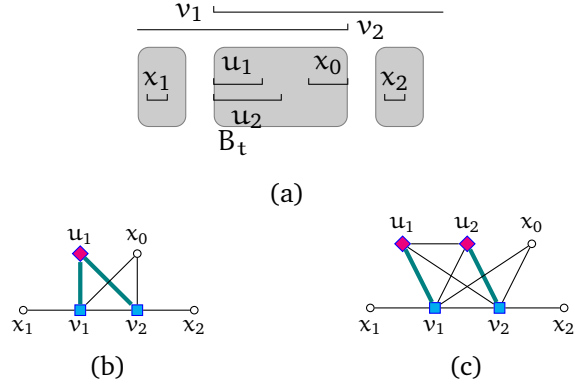

\begin{proof}[Proof of Claim~\eqref{eq:A}]
  \renewcommand\qedsymbol{$\lrcorner$}
Suppose for contradiction that $u_{1}\in K_{1}\setminus K_{s_t}$, $u_{2}\in K_{s_t}\setminus K_{1}$, and~$\lc(v_{1}) = \lc(v_{2}) = t$ or~$\rc(v_{1}) = \rc(v_{2}) = t$.
It is possible that~$v_{1} = v_{2}$.
We consider the case that~$\lc(v_{1}) = \lc(v_{2}) = t$, and the other is symmetric.

Suppose first that $u_1u_2$ is an edge of $E(G^K)$; see Figure~\ref{fig:Q-node-case-A-d}.
We take a vertex~$x_{1} \in K_{1}\setminus K_{2}$ and a vertex~$x_{2} \in K_{s_t}\setminus K_{s_t - 1}$.
Note that~$x_1u_1u_2x_2$ is an induced path in $G^K$.  We pick an arbitrary vertex~$x_{0}$ from~$\inner(B_{t+1})$.
If~$v_{1} = v_{2}$, then~$G^{K}[\{x_1,u_1,u_2,x_2,v_1,x_0\}]$
Configuration~\ref{fig:(p4+p1)*1-unlabeled} (reproduced and labeled in Figure~\ref{fig:fs-Q-node-case-B-c}).
Hence,~$v_{1} \ne v_{2}$.
We are in the previous case if~$(u_{1}, v_{2})$ or~$(u_{2}, v_{1})$ is in~$\mathcal{P}$.
Otherwise,~$G^{K}[\{x_1,u_1,u_2,x_2,v_1,v_2,x_0\}]$ is Configuration~\ref{fig:add-2-unlabeled} (reproduced and labeled in Figure~\ref{fig:fs-Q-node-case-A-d}).

In the rest, $u_1u_2$ is not an edge of $E(G^K)$.
Assume without loss of generality that $\rc(v_1) \leq \rc(v_2)$.
Let $r = \rc(v_1)$. Note that $t < r$.
\begin{itemize}
\item Case 1, $\encomp(B_t) \setminus V_r \neq \emptyset$.
  We take a vertex~$x_0 \in V_{r} \setminus V_{r-1}$ and a vertex~$x_1 \in \encomp(B_t) \setminus V_r$ (see Figure~\ref{fig:Q-node-case-A-a}).
  If~$v_{1} = v_{2}$, then~$G^{K}[\{u_1,x_1,u_2,v_1,x_0\}]$ is Configuration~\ref{fig:dag+2e-unlabeled} (reproduced and labeled in Figure~\ref{fig:fs-Q-node-case-B-a}).
Hence,~$v_{1} \ne v_{2}$.
We are in the previous case if~$(u_{1}, v_{2})$ or~$(u_{2}, v_{1})$ is in~$\mathcal{P}$.
Otherwise,~$G^{K}[\{u_1,x_1,u_2,v_1,v_2,x_0\}]$ is Configuration~\ref{fig:ddag+2e-unlabeled} (reproduced and labeled in Figure~\ref{fig:fs-Q-node-case-A-a}).

\item Case 2, $\encomp(B_t) \setminus V_r = \emptyset$.
  Note that~$v_1$ is not universal in~$G^K[A]$; otherwise $1=t$, $r=\ell$, $V_1 \cap V_2 \setminus V_{r} \neq \emptyset$ by Proposition~\ref{prop:V-sets}, and hence $\encomp(B_1) \setminus V_r \neq \emptyset$, contradicting our assumption.
  Since $v_1$ is not universal, $1 < t$ or $r < \ell$, and hence 
  $V_{t-1} \cap V_{t} \setminus V_r \neq \emptyset$ or $V_r \cap V_{r+1} \setminus V_t \neq \emptyset$ by
  Proposition~\ref{prop:V-sets}.
  Since~$V_{t-1} \cap V_{t} \subseteq \encomp(B_t)\subseteq V_r$, we have
  $V_r \cap V_{r+1} \setminus V_{t} \neq \emptyset$.
  We take a vertex~$x_{1} \in V_{r+1} \setminus V_r$ and a vertex~$x_{2} \in V_r \cap V_{r+1} \setminus V_t$ (see Figure~\ref{fig:Q-node-case-A-c}).
  If~$v_1 = v_2$, then the subgraph~$\{u_1,u_2,v_1,x_1,x_2\}$ is Configuration~\ref{fig:fork-unlabeled} (reproduced and labeled in Figure~\ref{fig:fs-Q-node-case-B-b}).
  Hence, $v_1 \neq v_2$.
  We are in the previous case if~$(u_{1}, v_{2})\in \mathcal{P}$.
Otherwise,~$G^K[\{u_1,u_2,v_1,v_2,x_1,x_2\}]$ is Configuration~\ref{fig:double-fork-unlabeled} when~$v_2 x_{1} \notin E(G^K)$ (reproduced and labeled in Figure~\ref{fig:fs-Q-node-case-A-b}), or
Configuration~\ref{fig:double-fork+1-unlabeled} when~$v_{2} x_{1} \in E(G^K)$ (reproduced and labeled in Figure \ref{fig:fs-Q-node-case-A-c}).
\qedhere
\end{itemize}
\end{proof}  

This concludes the claims and the proof of the lemma.
\end{proof}

We are now putting everything together to prove Lemma~\ref{thm:star}.
\begin{proof}[Proof of Lemma~\ref{thm:star}]
  First, we deal with necessity.  We note that any clique path of any interval graph $F$ from Figure~\ref{fig:forbidden-configurations-C_4-free} contains a pair $v \in K$ and $u \in V(G) \setminus K$ joined with a thick edge such that $\lk(v) < \lk(u) \leq \rk(u) < \rk(v)$. 
Thus, if $G^K$ contains any configuration in Figure~\ref{fig:forbidden-configurations-C_4-free}, then it does not satisfies Condition~\eqref{eq:star}.
  
  For the sufficiency, let~$T$ be a PQ-tree of~$G^{K}$, and let~$Z$ be its root.
  We use induction to show that for every node~$A$ of~$T$, the subgraph~$G^K[A]$ has an admissible clique path.
  This holds trivially for leaves, when~$G^K[A]$ is a complete graph, having only one maximal clique.
  By Lemmas~\ref{lem:p-node-induction} and~\ref{lem:q-node-induction}, we can find an admissible clique path of~$G^{K}[Z] = G^{K}$ for the root node~$Z$.  By~\eqref{eq:main_cond_inner_vertices}, $G^K$ satisfies Condition~\eqref{eq:star}.
\end{proof}

\appendix
\section*{Appendix: PQ-trees}

In this section we prove Propositions~\ref{prop:PQ-tree-nodes} and~\ref{prop:V-sets}.

\begin{proof}[Proof of Proposition~\ref{prop:PQ-tree-nodes}]
Let $u \in \inner(A)$ and $v \in \encomp(A)$.
Then $\cliques(u) \subseteq \cliques(A)$ and $\cliques(A) \subsetneq \cliques(v)$, and 
hence $\cliques(u) \subsetneq \cliques(v)$.
This proves $uv \in E(H)$ and completes the proof of~\eqref{item:PQ-tree-ex-univ}.

Let $\langle B_1, \ldots, B_{\ell}\rangle$ be an order of the children of $A$.
By~\eqref{item:PQ-tree-ex-univ} and by Theorem \ref{thm:pq-trees}, for every $v \in V(A) \setminus (\bigcup_{i=1}^\ell \inner(B_i))$ 
the set
\[
  \interval(v) = \{i \in [1,\ell] \mid \inner(B_i) \subsetneq N(v)\}
\]
is contiguous and has the size at least $2$.
Clearly, $v \in \univ(A)$ if and only if $\interval(v) = [1,\ell]$.

First, we show that $H(A) \setminus \univ(A)$ is connected when $A$ is a Q-node.
Assume for a contrary that $H[A] \setminus \univ(A)$ is not connected.
Given the above observations, it means there is $1 \leq k < \ell$ such that 
for every non-universal vertex $v \in V(A) \setminus (\bigcup_{i=1}^\ell \inner(B_i))$ in $H[A]$ we have
either $\interval(v) \subseteq [1,k]$ or $\interval(v) \subseteq [k+1, \ell]$.
Now, one can easily check that if we arrange the children of $A$ in the order
$\langle B_k, \ldots, B_1, B_\ell, \ldots, B_{k+1} \rangle$, and the other nodes of $T$ in a way described by Theorem~\ref{thm:pq-trees},
we obtain a clique order of $H$ in which the set $\cliques(v)$ for every $v \in V(H)$ is consecutive.
This, however, contradicts Theorem~\ref{thm:pq-trees}. 

Suppose $A$ is a P-node. 
We need to show that for every $v \in V(A) \setminus (\bigcup_{i=1}^\ell \inner(B_i))$ we have 
$\interval(v) = [1, \ell]$.
This holds when $v \in \encomp(A)$, so let $v \in \inner(A)$.
Anyway, if $\interval[v] \subsetneq [1, \ell]$, then one may note
some permutation of the children of $A$ leads to a clique order in which the set $\cliques(v)$ is not consecutive.
This violates Theorem~\ref{thm:pq-trees} and completes the proof of~\eqref{item:PQ-tree-connectivity}.
\end{proof}

\begin{proof}[Proof of Proposition \ref{prop:V-sets}]

Let $i < j$ be such that $1 < i < j \leq \ell$ or $1 \leq i < j < \ell$.
Assume that $V_{i-1} \cap V_{i} \setminus V_j = \emptyset$ and $V_j \cap V_{j+1} \setminus V_{i} = \emptyset$.
Then, we arrange the children of $A$ in the order 
$\langle B_1, \ldots, B_{i-1}, B_j, \ldots, B_i, B_{j+1}, \ldots B_\ell \rangle$ 
and the other nodes of $T$ in the way prescribed by Theorem~\ref{thm:pq-trees},
and we obtain a clique order in which the set $\cliques(v)$ for every $v \in V(H)$ is consecutive.
This contradicts Theorem~\ref{thm:pq-trees}.
\end{proof}

\bibliographystyle{plainurl}
\bibliography{../references}

\end{document}